\documentclass{amsart}
\addtolength{\hoffset}{-1cm} 
\addtolength{\textwidth}{2cm} 
\usepackage{amsmath}
\usepackage{bbm}
\usepackage{latexsym}
\usepackage{xcolor}
\usepackage{hyperref}
\usepackage{verbatim} 
\usepackage{mathtools} 
\usepackage[mathcal]{euscript}

\newcommand{\R}{\mathbb{R}}
\newcommand{\N}{\mathbb{N}}


\newcommand{\mc}[1]{\mathcal{#1}}

\newcommand{\ovr}[1]{\overrightarrow{#1}}





\newtheorem{thm}{Theorem}
\newtheorem{theorem}[thm]{Theorem}
\newtheorem{lemma}[thm]{Lemma}

\newtheorem{prop}[thm]{Proposition}
\newtheorem{corollary}[thm]{Corollary}
\newtheorem{proposition}[thm]{Proposition}

\newtheorem*{thmi}{Theorem}

\theoremstyle{definition}
\newtheorem{defi}[thm]{Definition}

\theoremstyle{definition}

\theoremstyle{remark} 
\newtheorem{remark}[thm]{Remark}

\newcommand{\be}{\begin{equation}}
\newcommand{\ee}{\end{equation}}

\mathtoolsset{showonlyrefs,showmanualtags} 
\numberwithin{equation}{section}

\title[Fuller singularities of generic control-affine systems]{
Time-optimal trajectories of generic control-affine systems have at worst iterated Fuller singularities 
}
\date{\today}

\author{Francesco Boarotto}
\address{Laboratorie Jacques-Louis Lions, Sorbonne Universit\'e, Universit\'e Paris-Diderot SPC, CNRS, Inria, France}
\email{\href{mailto:boarotto@ljll.math.upmc.fr}{\nolinkurl{boarotto@ljll.math.upmc.fr}}}

\author{Mario Sigalotti}
\address{Inria \& Laboratorie Jacques-Louis Lions, Sorbonne Universit\'e, Universit\'e Paris-Diderot SPC, CNRS, Inria, France}
\email{\href{mailto:Mario.Sigalotti@inria.fr}{\nolinkurl{mario.sigalotti@inria.fr}}}

\begin{document}
	\maketitle
	
\begin{abstract}
We consider in this paper the regularity problem for time-optimal trajectories of a single-input control-affine system on a $n$-dimensional manifold. 
We prove that, under generic conditions on the drift and the controlled vector field, any control $u$ associated with an optimal trajectory
is smooth out of a countable set of times. More precisely, there exists an integer $K$, only depending on the dimension $n$, such that the non-smoothness set of $u$ is made of isolated points, accumulations of isolated points,  
and so on up to $K$-th order iterated accumulations.   
\end{abstract}	
	
	\section{Introduction}\label{sec:intro}
	
	\subsection{Single-input systems and chattering phenomena} Let $M$ be a smooth\footnote{i.e., $C^\infty$ throughout the whole paper.}, connected, 
	$n$-dimensional manifold and denote by $\mathrm{Vec}(M)$ the space of smooth vector fields on $M$. Consider the (single-input) control-affine system
	\be\label{eq:si} \dot{q}=f_0(q)+uf_1(q),\quad q\in M,\quad u\in[-1,1],\quad f_0,f_1\in\mathrm{Vec}(M).\ee
	An \emph{admissible trajectory} of \eqref{eq:si} is an absolutely continuous curve 
	$q:[0,T]\to M$, $T>0$,
	such that 
	there exists $u\in L^\infty([0,T],[-1,1])$ so that 
	$\dot q(t)=f_0(q(t))+u(t)f_1(q(t))$ for almost every $t\in[0,T]$. 
	
	For any fixed initial datum $q_0\in M$, the time-optimal control problem associated with \eqref{eq:si} consists into looking for admissible trajectories $q:[0,T]\to M$, $T>0$, that minimize the time needed to steer $q_0$ to $q(T)$ among all admissible trajectories.
	
	A necessary (but not sufficient) condition for an admissible trajectory to be time-optimal is provided by the Pontryagin maximum principle (PMP, in short) \cite{PMP}. Introducing the control-dependent Hamiltonian \be\label{eq:maxH}\mathcal{H}:T^*M\times [-1,1]\to\R,\quad \mathcal{H}(\lambda,v)=\langle \lambda,(f_0+v f_1)(q)\rangle,\quad q=\pi(\lambda),\ee the PMP states that if  a trajectory $q(\cdot)$ associated with the 
	control $u(\cdot)$ is time-optimal, then it is \emph{extremal}, i.e., there exists an absolutely continuous curve $t\mapsto \lambda(t)\in T_{q(\cdot)}^*M\setminus\{0\}$ such that $\mathcal{H}(\lambda(t),u(t))$ maximizes $\mathcal{H}(\lambda(t),\cdot)$ for a.e. $t\in [0,T]$, and such that $\dot{\lambda}(t)=\overrightarrow{\mathcal{H}}(\lambda(t),u(t))$ a.e. on $[0,T]$. (For the precise definition of the Hamiltonian vector field $\overrightarrow{\mathcal{H}}$ and further details see Section~\ref{s:notations}.) 
		We call the triple $(q(\cdot),u(\cdot),\lambda(\cdot))$ an \emph{extremal triple}.
		In particular, the PMP reduces the problem of finding time-optimal trajectories  to the study of extremal ones.
	
	The kind of results we are interested in concern the regularity of time-optimal trajectories, even though our techniques handle in fact the broader class of 
extremal ones. Observe in any case that this is a hopeless task in full generality since, as proved by  Sussmann in \cite{Sussmann1986}, for any given measurable control $t\mapsto u(t)$, there exist a dynamical system of the form \eqref{eq:si} and an initial datum $q_0\in M$ for which the admissible trajectory driven by $u$ and starting at $q_0$ is time-optimal. 
	It makes then sense to look for better answers imposing some genericity conditions on $(f_0,f_1)$ (with respect to the Whitney topology on the space of pairs of smooth vector fields on $M$). The question we are then lead to tackle is the following: ``What kind of behavior can we expect for time-optimal trajectories of a generic system?'' Such a question corresponds to one of the open problems posed by A.~Agrachev in \cite{AAA-open-problems}.
	
	The problem of the regularity of extremal trajectories for control-affine systems of the form \eqref{eq:si} is known to be delicate. In his striking example, Fuller \cite{Fuller} exhibited a polynomial system of the kind studied here, in which controls associated with optimal trajectories 
have a converging sequence of isolated discontinuities.
	Since then the phenomenon of \emph{fast oscillations} (or \emph{chattering}) is also called the Fuller phenomenon, and his presence has important consequences for example on the study of optimal syntheses \cite{boscain-piccoli,caponigro-ghezzi-piccoli-trelat,marchal,piccoli2000regular,sussmann1990synthesis}. Another striking feature of this phenomenon is its stability: if the dimension of $M$ is sufficiently high, then chattering is structurally stable (i.e., it cannot be destroyed by a small perturbation of the initial system). The first result in this direction was presented in \cite[Theorem 0]{kupka1990ubiquity} starting from dimension $6$, but it was subsequently extensively explored in \cite{zelikin2012theory}. 
	It is however worth mentioning the fact that, to the best of our knowledge, none of these extremal trajectories have yet been proved to be time-optimal, nor it is known in lower dimensions (already in the $3$D case) whether or not the chattering appears for a generic choice of system \eqref{eq:si}. Finally, we remark that the 
	absence of Fuller phenomena for \eqref{eq:si} has been proved
	 in dimension $2$ for analytic systems and generic smooth systems \cite{lobry1970contro,piccoli1996regular,sussmann1982time,sussmann1987structure}. A first extensive investigation of the chattering phenomenon for multi-input affine-control systems has been presented in \cite{zelikin2012geometry}.
	
	\subsection{Fuller times along extremals trajectories} Many contributions have been provided to the description of 	
	the structure of optimal trajectories around a given point $q\in M$. The natural setting in which this problem is usually tackled is the study of all possible Lie bracket configurations between $f_0$ and $f_1$ at $q$ \cite{agrachev1990symplectic,AgrachevSigalotti,bressan1986,KS89,schattler1988local,schattler88altro,Sig05,sussmann1986envelopes}. This approach,  although very precise in its answers, has unfortunately the disadvantage of becoming 
	computationally 
	extremely  difficult 
	already for mildly degenerate situations in dimension $3$.
	
	\begin{defi}\label{defi:O}
	Given an admissible trajectory $q:[0,T]\to M$ of \eqref{eq:si}, 
	we denote by $O_q$ (or simply $O$ if no ambiguity is possible) the maximal open subset of $[0,T]$ such that there exists a 
	control $u:[0,T]\to [-1,1]$, associated with $q(\cdot)$, which is smooth on $O$. We also define $\Sigma_q$ (or $\Sigma$ if no ambiguity is possible) by
	\be\label{eq:Sigma} \Sigma=[0,T]\setminus O. \ee
	An \emph{arc} is a connected component of $O$. 
	An arc $\omega$ is said to be \emph{bang} if $u$ can be chosen so that $|u|\equiv 1$ along $\omega$, and \emph{singular} otherwise.
	Two arcs are \emph{concatenated} if they share one endpoint. The time-instant between two arcs is a \emph{switching time}.
	\end{defi}

The set $O_q$,  defined as above, depends only on the trajectory $q$ in the following sense: as long as $f_1(q(t))$ is different from zero, the control $u(t)$ is uniquely identified up to modification on a set of measure zero, while $u$ can be chosen arbitrarily on $\{t\mid f_1(q(t))=0\}$.

	\begin{defi}[Fuller Times]
		\label{defi:FT} Let $\Sigma_0$ be the set of isolated points in $\Sigma$ and define the \emph{Fuller times} as the elements of the set $\Sigma\setminus \Sigma_0$. 
		By recurrence, $\Sigma_k$ is defined as the set of isolated points of $\Sigma\setminus (\cup_{j=0}^{k-1}\Sigma_j)$. If $t\in\Sigma_k$ then $t$ is a \emph{Fuller time of order $k$}. We say that a Fuller time is of \emph{infinite order} if it belongs to
		\be
		\Sigma_\infty=\Sigma\setminus(\cup_{k\geq 1}\Sigma_k).
		\ee
	\end{defi} 
	
	The leading idea of this paper is to characterize the worst stable behavior for generic single-input systems of the form \eqref{eq:si}, in terms of the maximal order of its Fuller times. The heuristics behind our strategy is the following: thinking of points in $\Sigma\setminus\Sigma_0$ as ``accumulations of switchings'', points in $\Sigma\setminus(\Sigma_0\cup \Sigma_1)$ as ``accumulations of accumulations'' and so on, then if $t$ is a Fuller time of sufficiently high order, a large number of relations between $f_0(q(t))$ and $f_1(q(t))$ can be derived. 
	The existence of such a point $q(t)$ can then be ruled out by 
standard arguments based on Thom's transversality theorem (see, e.g., \cite[Proposition 19.1]{abraham-robbin}, which can be used in combination with \cite[\S 1.3.2]{goresky2012stratified} in order to the 
guarantee 
that the dense set of ``good'' systems can be taken open with respect to the Whitney $C^\infty$ topology on the space of vector fields). 
	The main result of this paper is the following.

	\begin{theorem}\label{t:main}
		Let $M$ be a $n$-dimensional smooth manifold. There exists 
		an open and dense set $\mc{V}\subset\mathrm{Vec}(M)\times\mathrm{Vec}(M)$ such that, 
		if the pair $(f_0,f_1)$ is in $\mc{V}$, 
		then for every extremal triple $(q(\cdot),u(\cdot),\lambda(\cdot))$ of the time-optimal control problem
		\be
		\dot q=f_0(q)+uf_1(q),\quad q\in M,\ u\in[-1,1],
		\ee
		the trajectory $q(\cdot)$ has at most Fuller times of order $(n-1)^2$, i.e.,
		$$\Sigma=\Sigma_0\cup \dots \cup \Sigma_{(n-1)^2},$$
		where $\Sigma$ and $\Sigma_j$ are defined as in Definition~\ref{defi:FT}.
	\end{theorem}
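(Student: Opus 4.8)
The plan is to translate high-order Fuller times into an accumulation of Lie-bracket-type relations at the base point $q(t)$, and then eliminate such points by a transversality count. I will work along an extremal triple $(q(\cdot),u(\cdot),\lambda(\cdot))$ and exploit the switching function $\varphi(t)=\langle\lambda(t),f_1(q(t))\rangle$, whose zeros govern $\Sigma$: on a bang arc $\varphi$ does not vanish, singular arcs occur where $\varphi\equiv 0$, and an isolated switching time is a zero of $\varphi$ whose one-sided derivatives are controlled by $\langle\lambda,[f_0,f_1](q)\rangle$ and, at higher order, by iterated brackets $\mathrm{ad}_{f_0}^k f_1$ and $[f_1,\mathrm{ad}_{f_0}^{k}f_1]$. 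The key quantitative step is a Fuller-times lemma (presumably already in the paper's toolbox): if $t_\star\in\Sigma_k$, i.e. $t_\star$ is an iterated accumulation of order $k$, then a prescribed number $N(k)$ of functions $\langle\lambda(t_\star),\,b_i(q(t_\star))\rangle$ vanish, where the $b_i$ are explicit iterated brackets of $f_0,f_1$; moreover the number $N(k)$ grows (at least linearly, in fact I expect $N(k)\gtrsim k$) with $k$. Chattering à la Fuller is the model case $k=1$, which forces a handful of bracket relations; each extra order of accumulation forces, roughly, one more independent relation by differentiating the accumulation structure once more.

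The second ingredient is a genericity/dimension count. Fix a large jet order $r$. The condition ``there exist $q\in M$ and $\lambda\in T_q^*M\setminus\{0\}$ such that $\langle\lambda,b_i(q)\rangle=0$ for $i=1,\dots,N$'' defines, for generic $(f_0,f_1)$, a stratified subset of the bundle $PT^*M$ of dimension $(2n-1)-N$ (each independent linear condition on $\lambda$ drops one dimension), provided the map $(f_0,f_1)\mapsto(\langle\lambda,b_i(q)\rangle)_i$ is a submersion on an appropriate jet space; this is where Thom's transversality theorem (in the multijet form if needed, via \cite[Proposition 19.1]{abraham-robbin}) enters, together with \cite[\S 1.3.2]{goresky2012stratified} to upgrade density to openness in the Whitney $C^\infty$ topology. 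Hence once $N > 2n-1$ the bad set is empty. Choosing $k$ so that $N(k) > 2n-1$ and tracking the exact combinatorics of how many brackets a $k$-th order accumulation produces yields the threshold; the paper's bookkeeping gives the clean bound $(n-1)^2$. Concretely, I would show that a Fuller time of order $(n-1)^2+1$ would force $\langle\lambda(t_\star),b_i(q(t_\star))\rangle=0$ for enough brackets $b_i$ that, generically, no such $(q,\lambda)$ exists, a contradiction; therefore $\Sigma_{(n-1)^2+1}=\emptyset$ and hence $\Sigma=\Sigma_0\cup\cdots\cup\Sigma_{(n-1)^2}$.

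Assembling the argument: (i) set up the switching function and its one-sided derivatives, expressing the ``branching'' of $\Sigma$ near a point in terms of signs of brackets evaluated against $\lambda$; (ii) prove by induction on $k$ the Fuller-times lemma counting how many bracket relations hold at a point of $\Sigma_k$, the inductive step differentiating the ``accumulation of accumulations'' structure and invoking that on each approximating piece the lower-order relations already hold, so in the limit one more relation (an appropriate higher bracket, or the vanishing of a suitable Wronskian/Hamiltonian determinant) must hold; (iii) run the transversality argument on the jet bundle to show the relevant bad set in $PT^*M$ is empty for $(f_0,f_1)$ in an open dense $\mathcal V$ as soon as the number of relations exceeds $2n-1$; (iv) optimize the counting to reach the stated order $(n-1)^2$, and conclude.

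The main obstacle I expect is step (ii): extracting a genuinely \emph{new}, \emph{independent} bracket relation at each successive order of accumulation, rather than merely re-deriving consequences of the ones already present. Near a bang–bang junction the Agrachev–Sigalotti / Sussmann-type analysis of switchings produces the first relations cleanly, but controlling a $k$-fold iterated accumulation requires a careful normal form or a careful limiting argument showing that the collapse of structure at each scale is not automatic but costs one more transversality condition; keeping the count sharp (so the bound is $(n-1)^2$ and not something weaker) is delicate. A secondary technical point is ensuring the transversality is applied on a finite-jet space with a submersive evaluation map — i.e. that the iterated brackets $b_i$ can be perturbed independently by perturbing $(f_0,f_1)$ — and then using the stratified-sets machinery to get openness, not just density, of $\mathcal V$.
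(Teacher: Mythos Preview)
Your outline matches the paper's strategy and you correctly flag step (ii) as the crux. But the mechanism you sketch --- ``one more independent relation per accumulation level, hence $N(k)\gtrsim k$'' --- would give a bound linear in $n$, not $(n-1)^2$, and is not how the argument runs. The paper's step (ii) is a two-lemma dichotomy. Lemma~\ref{lemma:Jets} applies when among the current simple relations $S_{I_1}=\cdots=S_{I_l}=0$ there is a unique one of maximal bracket-length; one more accumulation produces a further simple relation, but possibly of the \emph{same} maximal length (the alternative $S_{(-I_{l-1})}$). Once two simple relations of equal maximal length coexist, Lemma~\ref{lemma:codim} governs further accumulations: each step yields \emph{either} two new simple relations at once (bracket-length increases), \emph{or} only a determinantal relation $Q_r$ (a $2\times 2$ Poisson-bracket determinant), and the $Q_r$'s give independent conditions only while a certain simple relation $S_{I_{l+2}}$ stays nonzero. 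The resulting combinatorics is encoded as a discrete dynamical system on $\N^2$ (Proposition~\ref{p:intermezzo}, Figure~\ref{f:triangle}); the longest admissible path remaining in $\{x_1+x_2\le 2n-1\}$ has length $1+(n-2)(n-1)$, which is the source of the quadratic bound. Your parenthetical mention of a ``Wronskian/Hamiltonian determinant'' shows you suspect such objects appear, but without the branching and the $\N^2$-dynamics you cannot account for $(n-1)^2$.

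A second gap: the independence of the new relations (the codimension count in both lemmas) relies on $f_0(q)\wedge f_1(q)\ne 0$, so that the leading jet coefficients of the new brackets are genuinely independent jet coordinates. The collinear locus $\mathcal C=\{q\mid f_0(q)\wedge f_1(q)=0\}$ requires a separate argument (Lemma~\ref{lemma:dependent}, Theorem~\ref{thm:collinear}, Corollary~\ref{cor:collinear}) bounding the extra accumulation order there by $n-2$; this supplies the final summand in $1+(n-2)(n-1)+(n-2)=(n-1)^2$.
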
	 

	\begin{remark}\label{rem:smooth}
		Since each $\Sigma_i$, for $i=1,\dots,(n-1)^2$, is discrete, as a consequence of Theorem~\ref{t:main} we deduce that the control $u(\cdot)$ associated with any extremal triple $(q(\cdot),u(\cdot),\lambda(\cdot))$ is smooth out of a finite union of discrete sets (in particular, out of a set of measure zero).
	\end{remark}
	
	As we already explained, deriving dependence relations directly on $f_0$ and $f_1$ is extremely complicated. The PMP naturally suggests to rather search for conditions in the cotangent space $T^*M$, where they are more easily characterizable,
and to subsequently project them down on the level of vector fields. On the other hand,  
the estimate $(n-1)^2$ on the maximal order of Fuller points obtained in this way 
is far from being optimal. 
The computation of the sharpest bound on the  order of Fuller points 
is still an open problem.

	\subsection{Structure of the paper} In Section~\ref{s:notations} we introduce the technical tools we need in the rest of the paper and 
	we present a brief survey of related results.
	 Section~\ref{sec:Start} is the starting point of the novel contributions of the paper: we prove that at Fuller times of order larger than zero, i.e., for $t\in \Sigma\setminus\Sigma_0$, in addition to the conditions $\langle \lambda(t), f_1(q(t))\rangle=\langle \lambda(t), [f_0,f_1](q(t))\rangle=0$, one also has that either $\langle\lambda(t), [f_0+f_1,[f_0,f_1]](q(t))\rangle=0$ or  $\langle\lambda(t), [f_0-f_1,[f_0,f_1]](q(t))\rangle=0$. 
The computations leading to this result do not require any genericity assumption.
Section~\ref{sec:HO}, which constitutes the technical core of this work, explains how to derive new conditions at each accumulation step and how to prove their independence. Section~\ref{sec:proof} concludes the proof of Theorem \ref{t:main} and, finally, in Section~\ref{sec:3}, the case of time-optimal trajectories on three dimensional manifolds is analyzed in greater detail.

\subsection*{Acknowledgements} 
The authors have been supported by the ANR SRGI (reference ANR-15-CE40-0018) 
and 
by a public grant as part of the {\it Investissement d'avenir} project, reference ANR-11-LABX-0056-LMH, LabEx LMH, in a joint call with {\it Programme Gaspard Monge en Optimisation et Recherche Op\'erationnelle}. The authors also wish to thank the anonymous referee for the careful revision of the paper, and the detailed comments that let us significantly improve the quality of our exposition.

	\section{Previous results and consequences of Theorem~\ref{t:main}}\label{s:notations}
\subsection{Notations}

Let us introduce some technical notions which will be extensively used throughout the rest of the paper. Consider the cotangent space $T^*M$ of $M$, endowed with the canonical symplectic form $\sigma$. For any Hamiltonian function $p:T^*M\to \R$, its Hamiltonian lift $\overrightarrow{p}\in\mathrm{Vec}(T^*M)$ is defined using the relation $$\sigma_\lambda(\cdot,\overrightarrow{p})=\langle d_\lambda p,\cdot\rangle.$$

For all $T>0$ and $q_0\in M$ we define the \emph{attainable set} from $q_0$ at time $T$ as
\be\label{eq:att}
	A(T,q_0)=\{q(t)\mid q:[0,T]\to M\textrm{ is an admissible trajectory of \eqref{eq:si} such that }q(0)=q_0\}. 
\ee

The precise content of the PMP, already mentioned at the beginning of Section~\ref{sec:intro}, is then recalled below (see \cite{agrachevbook,PMP}). 

\begin{thmi}[PMP]
	Let $q:[0,T]\to M$ be an admissible trajectory of \eqref{eq:si}, associated with a 
	control $u(\cdot)$, such that $q(T)\in\partial A(T,q_0)$. Then there exists
	$\lambda:[0,T]\to T^*M$ absolutely continuous
	such that
	$(q(\cdot), u(\cdot),\lambda(\cdot))$ is an \emph{extremal triple}, i.e., in terms of the control-dependent Hamiltonian $\mathcal{H}$ introduced in \eqref{eq:maxH},
	\begin{align}
	&\lambda(t)\in T^*_{q(t)}M\setminus \{0\},\quad \forall t\in [0,T],\\
	\label{eq:maxPMP}
	&\mathcal{H}(\lambda(t),u(t))=\max_{v\in[-1,1]}\mathcal{H}(\lambda(t),v),\quad \mathrm{for\ a.e.\ }t\in[0,T],\\
	&\dot{\lambda}(t)=\overrightarrow{\mathcal{H}}
	(\lambda(t),u(t)),\quad \mathrm{for\ a.e.\ }t\in[0,T].\label{eq:dynPMP}
	\end{align}
\end{thmi}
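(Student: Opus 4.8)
The statement to prove is the Pontryagin maximum principle in its boundary form, and the plan is the classical needle--variation argument, specialised to the control-affine system~\eqref{eq:si}.

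\textbf{Step 1 (flow and needle variations).} I would fix the reference data $q(\cdot),u(\cdot),T$ and introduce the (non-autonomous) flow $\Phi^t_s$ of the time-dependent vector field $q\mapsto f_0(q)+u(t)f_1(q)$, so that $q(t)=\Phi^t_0(q_0)$; by Carath\'eodory theory $\Phi$ is well defined and Lipschitz near the reference curve, and $(t,s)\mapsto d\Phi^t_s$ solves the associated linearised equation. For a Lebesgue point $\tau\in(0,T)$ of $u$, a value $\omega\in[-1,1]$ and a small $\varepsilon>0$, let $u_\varepsilon$ be obtained from $u$ by setting its value equal to $\omega$ on $[\tau-\varepsilon,\tau]$, and $q_\varepsilon(\cdot)$ the associated trajectory from $q_0$. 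The basic estimate to establish is
\[
q_\varepsilon(T)=q(T)+\varepsilon\,v_{\tau,\omega}+o(\varepsilon),\qquad
v_{\tau,\omega}:=(\omega-u(\tau))\,d_{q(\tau)}\Phi^T_\tau\!\left(f_1(q(\tau))\right)\in T_{q(T)}M,
\]
the field $f_1$ entering because the difference of the two dynamics at $q(\tau)$ equals $(\omega-u(\tau))f_1(q(\tau))$.

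\textbf{Step 2 (cone of variations and separation).} Concatenating finitely many such needles at distinct Lebesgue points, with independently varying lengths, the same remainder estimate shows that the closed convex cone
\[
\mathcal{K}:=\overline{\mathrm{cone}}\,\big\{\,v_{\tau,\omega}\ \big|\ \tau\text{ a Lebesgue point of }u,\ \omega\in[-1,1]\,\big\}\ \subset\ T_{q(T)}M
\]
is a first-order approximating cone of $A(T,q_0)$ at $q(T)$. I would then argue by contradiction: if $\mathcal{K}$ had nonempty interior equal to all of $T_{q(T)}M$, a Brouwer-degree argument applied to the endpoint map restricted to a small simplex of needle lengths would place $q(T)$ in the interior of $A(T,q_0)$, contradicting $q(T)\in\partial A(T,q_0)$. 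Hence $\mathcal{K}$ is a proper convex cone and, by the supporting-hyperplane theorem in the finite-dimensional space $T_{q(T)}M$, there is $\lambda_T\in T^*_{q(T)}M\setminus\{0\}$ with $\langle\lambda_T,v\rangle\le 0$ for all $v\in\mathcal{K}$.

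\textbf{Step 3 (backward transport and maximality).} Set $\lambda(t):=\big(d_{q(t)}\Phi^T_t\big)^{\!*}\lambda_T\in T^*_{q(t)}M$. Since $d\Phi$ is invertible, $\lambda(t)\neq 0$ for every $t$; differentiating and using that $d\Phi$ solves the linearised equation shows that $\lambda(\cdot)$ is absolutely continuous and satisfies, a.e.\ in $t$, the dual linearised equation $\dot\lambda(t)=-\big(\partial_q(f_0+u(t)f_1)\big)^{\!*}\lambda(t)$, which is exactly $\dot\lambda(t)=\overrightarrow{\mathcal{H}}(\lambda(t),u(t))$ for the Hamiltonian~\eqref{eq:maxH} (see~\cite{agrachevbook,PMP}). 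Finally, for every Lebesgue point $\tau$ and every $\omega\in[-1,1]$ the separation inequality reads
\[
0\ \ge\ \langle\lambda_T,v_{\tau,\omega}\rangle=(\omega-u(\tau))\,\langle\lambda(\tau),f_1(q(\tau))\rangle,
\]
i.e.\ $u(\tau)$ maximises $v\mapsto v\,\langle\lambda(\tau),f_1(q(\tau))\rangle$ over $[-1,1]$; since $\mathcal{H}(\lambda,v)=\langle\lambda,f_0(q)\rangle+v\langle\lambda,f_1(q)\rangle$ this is precisely $\mathcal{H}(\lambda(\tau),u(\tau))=\max_{v\in[-1,1]}\mathcal{H}(\lambda(\tau),v)$. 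As almost every $\tau$ is a Lebesgue point of $u$, the maximality holds a.e., so $(q(\cdot),u(\cdot),\lambda(\cdot))$ is an extremal triple.

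\textbf{Main obstacle.} The delicate point is the openness/degree step of Step~2: upgrading the first-order surjectivity of the cone $\mathcal{K}$ to genuine local surjectivity of the nonlinear endpoint map, and hence to $q(T)\in\mathrm{int}\,A(T,q_0)$. This needs $o(\varepsilon)$ remainder estimates that are uniform when several needles of independently varying lengths are combined, together with a fixed-point/degree argument; everything else reduces to routine ODE dependence on data and finite-dimensional convex separation.
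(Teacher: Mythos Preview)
The paper does not prove this statement; it is recalled with references to \cite{agrachevbook,PMP} and used as background. Your sketch is the classical needle-variation proof of the Pontryagin maximum principle as presented in those references, and the outline is correct, including your identification of the only nontrivial step (the uniform $o(\varepsilon)$ control of multi-needle remainders feeding into a degree/open-mapping argument). So there is nothing to compare: you are reproducing the standard argument that the paper simply cites.
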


Let $(q(\cdot), u(\cdot),\lambda(\cdot))$ be an extremal triple. The curve $q(\cdot)$ is in particular said to be an \emph{extremal trajectory}.
We associate with $(q(\cdot), u(\cdot),\lambda(\cdot))$ the \emph{switching function}
	\be
		h_1(t)=\langle\lambda(t),f_1(q(t))\rangle.	
	\ee
	Differentiating a.e. on $[0,T]$, it follows from \eqref{eq:dynPMP} that  for every smooth vector field $X$ on $M$ 
	\be
		\frac{d}{dt}\langle\lambda(t),X(q(t))\rangle=\langle \lambda(t),[f_0+u(t)f_1,X](q(t))\rangle, \qquad \mbox{for a.e. }t\in[0,T].
	\ee
	In particular, $h_1$ is of class $C^1$ and, setting 
	\be
		h_{01}(t)=\langle \lambda(t),[f_0,f_1](q(t))\rangle,\quad \forall t\in[0,T],
	\ee
we have	$\dot{h}_1(t)=h_{01}(t)$ for every $t\in [0,T]$.
\begin{remark}\label{rmk:sign}
	The maximality condition \eqref{eq:maxPMP} implies that $$\mathcal{H}(\lambda(t),u(t))=\langle \lambda(t),f_0(q(t))\rangle+\max_{v\in[-1,1]}v\langle\lambda(t),f_1(q(t))\rangle= \langle \lambda(t),f_0(q(t))\rangle+
	|\langle\lambda(t),f_1(q(t))\rangle|.$$
	In particular, $u(t)=\mathrm{sgn}(h_1(t))\in\{-1,+1\}$ whenever $h_1(t)\neq 0$.
\end{remark}	
	Repeated differentiation shows that $h_1$ is smooth when the control is. In particular, in terms of the set $O$ introduced in Definition~\ref{defi:O}, $h_1|_O\in C^\infty(O)$. 
	
A folklore result on bang and singular arcs 
is the following. 	Recall that, for every $f\in {\rm Vec}(M)$,  ${\rm ad}_f :{\rm Vec}(M)\to {\rm Vec}(M)$ denotes the adjoint action defined by 
${\rm ad}_f g=[f,g]$. 
		\begin{prop}
Assume that ${\rm span}\{({\rm ad}^k_{f_0+ f_1}f_1)(q)\mid k\in \N\}=T_qM$ and 
${\rm span}\{({\rm ad}^k_{f_0- f_1}f_1)(q)\mid k\in \N\}=T_qM$
for every $q\in M$.
Fix an extremal triple $(q(\cdot),u(\cdot),\lambda(\cdot))$ and an arc $\omega\subset O_q$. Then, either 
  $h_1(t)= 0$ for at most finitely many $t\in \omega$ and the arc is bang, or  
   $h_1\equiv 0$ on $\omega$ and the arc is singular.
	\end{prop}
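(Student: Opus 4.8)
The plan is to exploit the fact that on the arc $\omega$ the control $u$ is smooth by hypothesis, so that $(q,\lambda)$ solves there the smooth time-dependent Hamiltonian system $\dot\lambda=\overrightarrow{\mathcal{H}}(\lambda,u(t))$, whence $q,\lambda\in C^\infty(\omega)$ and $h_1\in C^\infty(\omega)$. The basic computational input is the identity, obtained by induction on $k$ from $\frac{d}{dt}\langle\lambda,X(q)\rangle=\langle\lambda,[f_0+u f_1,X](q)\rangle$ together with $\mathrm{ad}_{f_1}f_1=0$: on any subinterval of $\omega$ on which $u\equiv\varepsilon$ with $\varepsilon\in\{-1,+1\}$,
\[
\frac{d^k}{dt^k}h_1(t)=\big\langle\lambda(t),\big(\mathrm{ad}^k_{f_0+\varepsilon f_1}f_1\big)(q(t))\big\rangle,\qquad k\ge 0.
\]
Combined with $\lambda(t)\neq 0$ and the two spanning hypotheses, this gives the \emph{key principle} that I will invoke repeatedly: if $h_1$ vanishes to infinite order (i.e.\ all its time-derivatives vanish) at a point $t_*$ that lies on, or is an endpoint of, a subinterval on which $u\equiv+1$ (resp.\ $u\equiv-1$), so that $h_1$ is smooth up to $t_*$ and the displayed identity holds there by continuity, then $\langle\lambda(t_*),(\mathrm{ad}^k_{f_0+f_1}f_1)(q(t_*))\rangle=0$ (resp.\ with $f_0-f_1$) for every $k$, hence $\lambda(t_*)=0$, which is impossible. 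A second, elementary, observation: on the open set $\{h_1\neq 0\}$ the maximality condition of the PMP forces $u=\mathrm{sgn}(h_1)\in\{-1,+1\}$, so on each connected component of $\{h_1\neq0\}$ the smooth control $u$ is constant, equal to $+1$ or $-1$, and --- $u$ being continuous on $\omega$ --- two such components sharing an endpoint (necessarily a zero of $h_1$ belonging to $\omega$) must carry the same value.

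Next I would carry out the dichotomy. Suppose first $h_1\not\equiv 0$ on $\omega$; the goal is to show $h_1$ has finitely many zeros in $\omega$ and $|u|\equiv1$, so that the arc is bang. It is enough to prove that $h_1$ vanishes to infinite order at no point of $\omega$: then every zero of $h_1$ has finite order, hence is isolated, so the zero set is discrete; it cannot accumulate at an endpoint $\beta$ of $\omega$, because there the (discrete, hence chained) components of $\{h_1\neq0\}$ would all carry one value, so $u\equiv\varepsilon$ near $\beta$, $(q,\lambda)$ would extend smoothly to $\beta$, Rolle's theorem would force $h_1$ to vanish to infinite order at $\beta$, and the key principle would give $\lambda(\beta)=0$ against the PMP; hence the zeros are finite in number, $u=\pm1$ off this finite set and extends continuously over it, so $|u|\equiv1$. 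To rule out infinite-order zeros, suppose $t_*\in\omega$ is one. If $h_1$ does not vanish identically near $t_*$, then there are components of $\{h_1\neq0\}$ arbitrarily close to $t_*$; since every maximal interval on which $h_1\equiv0$ is bounded by zeros at which, as above, $h_1$ must also vanish to infinite order, an analysis of how the components of $\{h_1\neq0\}$ and the maximal zero-intervals interleave near $t_*$ produces either a point $\tau$, arbitrarily close to $t_*$, at which such an interval abuts a component where $u\equiv\varepsilon$ --- whence $h_1$ vanishes to infinite order from one side and obeys the displayed identity from the other, so $\lambda(\tau)=0$ --- or else the components are chained up to $t_*$, so $u$ is locally constant equal to some $\varepsilon$ near $t_*$ and the key principle applies directly at $t_*$; in both cases a contradiction. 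If instead $h_1$ vanishes identically near $t_*$, the set of points around which $h_1$ vanishes identically is a nonempty proper open subset of the connected set $\omega$, hence has a boundary point $t_{**}\in\omega$; this $t_{**}$ is again an infinite-order zero but lies in no interval of zeros, so it falls under the previous case and yields the same contradiction.

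It remains to handle $h_1\equiv0$ on $\omega$: then the arc is singular by Definition~\ref{defi:O}, and it cannot be made bang, for if $|u|\equiv1$ on $\omega$ then $u$ is a smooth $\{-1,+1\}$-valued function, hence constant equal to some $\varepsilon$, and the displayed identity with $h_1\equiv0$ gives $\langle\lambda(t),(\mathrm{ad}^k_{f_0+\varepsilon f_1}f_1)(q(t))\rangle=0$ for all $k$ and all $t\in\omega$, so $\lambda\equiv0$ by the corresponding spanning hypothesis --- impossible.

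The step I expect to be the real work is the one ruling out infinite-order zeros, i.e.\ showing that $h_1$ does not vanish identically on any proper subinterval of $\omega$: this demands a careful bookkeeping of the way the closed set $\{h_1=0\}$, its interior, and the components of $\{h_1\neq0\}$ interleave near a putative infinite-order zero, the whole point of the two spanning hypotheses being exactly to convert ``$h_1$ has infinite-order contact with $0$ at a point sitting on, or abutting, a bang sub-piece'' into the contradiction $\lambda=0$; the discreteness and finiteness of the zero set and the treatment of the singular case then follow from the same principle.
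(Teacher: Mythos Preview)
Your argument is correct in its essentials and rests on the same mechanism as the paper: smoothness of $h_1$ on $\omega$, the identity $h_1^{(k)}=\langle\lambda,\mathrm{ad}^k_{f_0+\varepsilon f_1}f_1\rangle$ valid wherever $u\equiv\varepsilon$, its extension by continuity, Rolle's theorem, and the spanning hypotheses. The paper's organization is more direct: assuming $Z=\{h_1=0\}\cap\omega$ is infinite and $\neq\omega$, it simply locates a point that is simultaneously a cluster point of $Z$ (so all derivatives of $h_1$ vanish there by Rolle) and of some $F^\varepsilon=\{\varepsilon h_1>0\}$ (so the identity holds there by continuity), splitting only on whether $Z$ has measure zero --- in which case $F^+\cup F^-$ is dense and continuity of $u$ forces $u$ constant on all of $\omega$ --- or positive measure.

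Your Case~A is more elaborate than needed, and the dichotomy you state (``a zero-interval abuts a nonzero-component, or else the components are chained'') is not literally exhaustive: $Z$ can contain a Cantor-like piece adjacent to a nontrivial interval of zeros, in which case neither alternative holds. The repair is already implicit in your own continuity remark: in Case~A the point $t_*$ is \emph{itself} a cluster point of some $F^\varepsilon$ (that is precisely what ``$h_1$ does not vanish identically near $t_*$'' says, given $h_1(t_*)=0$), and the identity $h_1^{(k)}=\langle\lambda,\mathrm{ad}^k_{f_0+\varepsilon f_1}f_1\rangle$, valid on each component of $F^\varepsilon$, extends to $t_*$ by continuity of both sides on $\omega$. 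So the interleaving analysis can be dropped entirely, which is essentially what the paper does.
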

\begin{proof}
Let us set 
$Z=\{\tau\in\omega\mid h_1(\tau)=0\}$ and $F^\pm= \{\tau\in\omega\mid \pm h_1(\tau)>0\}$. Assume by contradiction that $Z$ has infinitely many points and that it is different from $\omega$.
We have from Remark~\ref{rmk:sign}  that, up to modifying $u$ on a set of measure zero,  $u\equiv 1$ on $F^+$  and $u\equiv -1$ on $F^-$.
If $Z$ has measure $0$, then, by continuity of $u|_\omega$,  $u\equiv 1$ or $u\equiv -1$ on $\omega$.
In particular,  
$h_1^{(k)}(t)=\langle\lambda(t),({\rm ad}^k_{f_0+ f_1}f_1)(q(t))\rangle$ or 
$h_1^{(k)}(t)=\langle\lambda(t),({\rm ad}^k_{f_0- f_1}f_1)(q(t))\rangle$
on $\omega$. 
Since between any two vanishing points for $h_1^{(k-1)}$ there is a vanishing point for $h_1^{(k)}$,
we deduce that at every cluster point $t\in \bar \omega$ for $Z$ (i.e., the limit of infinitely many distinct points in $Z$),  
$\lambda(t)$ annihilates either $({\rm ad}^k_{f_0+ f_1}f_1)(q(t))$ for every $k\in \N$ or $({\rm ad}^k_{f_0- f_1}f_1)(q(t))$ for every $k\in \N$, leading to a contradiction.

In the case where the measure of $Z$ is positive, there exists $t\in \omega$ which is both a cluster point for $Z$ and for either $F^+$ or $F^-$.
By continuity of $h_1^{(k)}|_\omega$ for every $k\in\N$, 
we deduce that either
$h_1^{(k)}(t)=\langle\lambda(t),({\rm ad}^k_{f_0+ f_1}f_1)(q(t))\rangle$ 
for every $k\in \N$ or 
$h_1^{(k)}(t)=\langle\lambda(t),({\rm ad}^k_{f_0- f_1}f_1)(q(t))\rangle$ for every $k\in \N$ and we conclude as above.
\end{proof}

Notice that the assumption that  
${\rm span}\{({\rm ad}^k_{f_0+ f_1}f_1)(q)\mid k\in \N\}=T_qM$ and 
${\rm span}\{({\rm ad}^k_{f_0- f_1}f_1)(q)\mid k\in \N\}=T_qM$
for every $q\in M$ holds true  generically with respect to $(f_0,f_1)\in\mathrm{Vec}(M)^2$.
From now on the term \emph{generic} is used to express that a property of the pair of vector fields $(f_0,f_1)$ holds true on an open and dense subset of $\mathrm{Vec}(M)\times \mathrm{Vec}(M)$.

	\begin{defi}
	\label{def:word}
	Let $\mathfrak{A}$ be the alphabet containing the letters $\{+,-,0,1\}$, and let $I=(i_1\cdots i_d)\in\mathfrak{A}^d$ be a word of length $d$ in $\mathfrak{A}$. 
	Then 
	we employ the shorthand notation 
	$$f_I=[f_{i_1},\dots,[f_{i_{d-1}},f_{i_d}]\dots],$$
	with the convention that $f_{\pm}=f_0\pm f_1$.
	Moreover, given an extremal triple $(q(\cdot),u(\cdot),\lambda(\cdot))$ on $[0,T]$,
	we set 
	\begin{align*} 
	h_{I}(t)&=
	\langle\lambda(t),f_I(q(t))\rangle,\qquad t\in[0,T].
	\end{align*}
\end{defi}

\subsection{Previous results}
	
Sussmann proved in \cite{Sussmann1986} that for every $T>0$ and every control $u\in L^\infty([0,T],[-1,1])$ there exists a control system of the type \eqref{eq:si} and an initial datum $q_0$ such that the trajectory starting at $q_0$ and corresponding to $u(\cdot)$ is time-optimal. 
In generic situations, however, some further regularity can be expected, as  recalled in the following three results.

\begin{theorem}[{\cite[Theorem 0]{BonnardKupka}, \cite[Theorem 2.6]{CJT-SIAM}}]\label{t:BK}
Generically with respect to $(f_0,f_1)\in \mathrm{Vec}(M)^2$,  
for any extremal triple $(q(\cdot),u(\cdot),\lambda(\cdot))$ on $[0,T]$ 
such that $h_1|_{[0,T]}\equiv 0$, 
 the set $\Omega=\{t\in [0,T]\mid h_{101}(t)\ne 0\}$ is of full measure in $[0,T]$ and  
 $u(t)=-h_{001}(t)/h_{101}(t)$ almost everywhere on $\Omega$.
\end{theorem}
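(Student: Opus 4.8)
The plan is to treat the two assertions of the theorem separately, since they are of a completely different nature. The \emph{pointwise formula} for $u$ on $\Omega$ requires no genericity assumption: starting from $h_1\equiv 0$ on $[0,T]$ and $\dot h_1=h_{01}$ one gets $h_{01}\equiv 0$ on $[0,T]$; since $\lambda(\cdot)$ and $q(\cdot)$ are absolutely continuous and $f_{01}$ is smooth, $h_{01}$ is absolutely continuous, so applying the differentiation rule $\frac{d}{dt}\langle\lambda(t),X(q(t))\rangle=\langle\lambda(t),[f_0+u(t)f_1,X](q(t))\rangle$ with $X=f_{01}$ gives
$$0=\dot h_{01}(t)=h_{001}(t)+u(t)\,h_{101}(t)\qquad\text{for a.e.\ }t\in[0,T],$$
whence $u(t)=-h_{001}(t)/h_{101}(t)$ for a.e.\ $t\in\Omega$. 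This is a two-line computation that I would dispatch first.

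The substance is that $\Omega$ has full measure, i.e.\ that $Z:=\{t\in[0,T]\mid h_{101}(t)=0\}$ is Lebesgue-null, and here genericity of $(f_0,f_1)$ is indispensable. I would argue by contradiction, assuming $|Z|>0$. Since $h_{101}$ is continuous, $Z$ is closed, and almost every $t_0\in Z$ is simultaneously a density point of $Z$, a differentiability point of all the (absolutely continuous) functions $h_I$ that will occur, and a point where all the relevant a.e.\ identities hold; fix such a $t_0$. The displayed identity together with $h_{101}(t_0)=0$ forces $h_{001}(t_0)=0$, so $\lambda(t_0)\ne 0$ annihilates $f_1,f_{01},f_{001},f_{101}$ at $q(t_0)$. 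Moreover $h_{101}$ vanishes identically on $Z$ and $h_{001}$ vanishes a.e.\ on $Z$ (again by the displayed identity), so, $Z$ having density $1$ at $t_0$, differentiating $h_{101}$ and $h_{001}$ at $t_0$ yields
$$h_{0001}(t_0)+u(t_0)\,h_{1001}(t_0)=0,\qquad h_{0101}(t_0)+u(t_0)\,h_{1101}(t_0)=0.$$
Iterating this scheme — solving for $u(t_0)$ and substituting back on the part of $Z$ where the coefficient of $u(t_0)$ is nonzero, and recording the extra vanishings on the complementary part — produces, after finitely many but $n$-dependent steps, a finite system of algebraic relations satisfied by the nonzero covector $\lambda(t_0)$ and a scalar $v_0=u(t_0)\in[-1,1]$, involving only the values at $q(t_0)$ of iterated brackets $f_I$ with $|I|\le N=N(n)$.

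The proof then closes by a standard application of Thom's transversality theorem, in the form (indicated in the introduction) that keeps the good set open for the Whitney $C^\infty$ topology. Let $\Gamma\subset J^N$ be the semialgebraic ``bad'' set of $N$-jets of pairs $(f_0,f_1)$ for which the above system admits a solution $(\lambda_0,v_0)$ with $\lambda_0\ne 0$; projecting out the $n+1$ parameters $(\lambda_0,v_0)$ gives $\mathrm{codim}\,\Gamma\ge(\#\,\text{relations})-(n+1)$, and once $N$ is large enough that this quantity exceeds $n=\dim M$, transversality of the generic jet extension $q\mapsto j^N(f_0,f_1)(q)$ to $\Gamma$ forces the extension to miss $\Gamma$ altogether, contradicting the existence of $t_0$; hence $Z$ is null. \textbf{The main obstacle} is the iteration of the previous paragraph: past the second differentiation the mere $L^\infty$ regularity of $u$ blocks naive differentiation of the relations, so the recursion must be organised through the above case split, and one must check that each round produces genuinely \emph{independent} new relations — outpacing the one unit of codimension conceded to the free parameter $v_0$ — in spite of the Jacobi identities among the $f_I$. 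Establishing this independence, and the resulting bound on $N$ in terms of $n$, is the technical core carried out in \cite{BonnardKupka} and \cite{CJT-SIAM}, and is precisely what makes the statement valid in every dimension rather than only for small $n$.
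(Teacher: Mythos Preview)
The paper does not contain a proof of Theorem~\ref{t:BK}: it is quoted as a result from the literature (Bonnard--Kupka and Chitour--Jean--Tr\'elat) in the ``Previous results'' subsection, with no argument supplied. So there is no paper-proof to match your proposal against, and you yourself correctly defer the hard step (independence of the successive relations and the resulting bound on $N$) to those references.

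Your outline is sound. The formula for $u$ on $\Omega$ is indeed immediate from differentiating $h_1\equiv 0$ twice, and the full-measure claim does reduce, via Thom transversality, to showing that a positive-measure zero set of $h_{101}$ forces too many independent annihilation conditions on a nonzero covector. One small point: your claim that $h_{001}$ vanishes ``a.e.\ on $Z$'' can be upgraded to ``everywhere on $Z$'' for free, since $|h_{001}|\le |u|\,|h_{101}|\le |h_{101}|$ a.e.\ and both sides are continuous; this makes the density-point differentiation of $h_{001}$ cleaner.

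It is worth noting, for comparison, that the present paper actually proves a \emph{strengthening} of Theorem~\ref{t:BK} (Corollary~\ref{cor:countableinBK}: the complement of $\Omega$ is not merely null but countable), and its route is different from yours. Rather than working at a Lebesgue density point of $Z$ and differentiating through $Z$, the paper observes that when $h_1\equiv 0$ one has $\Omega^c\subset\Xi$ (the set of Proposition~\ref{p:intermezzo}), and then uses its accumulation-order machinery: each step of iterated accumulation produces, via the limit-of-difference-quotients identity in the proof of Lemma~\ref{lemma:Jets}, a new simple or polynomial relation whose independence is checked in jet coordinates (Lemmas~\ref{lemma:Jets} and~\ref{lemma:codim}). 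This yields the sharper countability conclusion and bypasses the Lebesgue-density step entirely, at the cost of the discrete ``admissible curve'' bookkeeping of Section~\ref{sec:proof}. Your density-point approach is closer in spirit to the original \cite{BonnardKupka} argument and gives only the measure-zero conclusion.
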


\begin{theorem}[{\cite[Proposition 1]{Agrachev1995}}]\label{t:AAA}
	Let $I(f_1)\subset \mathrm{Lie}(f_0,f_1)$ denote the ideal generated by $f_1$. If $I_q(f_1)=T_qM$ for every $q\in M$, then, for every extremal trajectory $q:[0,T]\to M$, 
the set $O_q$ is open and dense in $[0,T]$. 	
\end{theorem}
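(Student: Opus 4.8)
The plan is to argue by contradiction. Since $O_q$ is open by definition, it suffices to prove that $\Sigma_q=[0,T]\setminus O_q$ has empty interior, so I would assume that some nontrivial interval $[a,b]$ is contained in $\Sigma_q$ and fix an extremal triple $(q(\cdot),u(\cdot),\lambda(\cdot))$. The whole argument rests on a single \emph{peeling} step, which I would isolate first: if $g\in\mathrm{Vec}(M)$ is such that $t\mapsto\langle\lambda(t),g(q(t))\rangle$ vanishes identically on a subinterval $J\subseteq(a,b)$, then \emph{either} $u$ coincides a.e.\ on some subinterval of $J$ with a smooth function — which is impossible, since $J\subseteq\Sigma_q$ — \emph{or} there is a subinterval $J'\subseteq J$ along which both $t\mapsto\langle\lambda(t),[f_0,g](q(t))\rangle$ and $t\mapsto\langle\lambda(t),[f_1,g](q(t))\rangle$ vanish identically.

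To establish this, I would use the identity $\frac{d}{dt}\langle\lambda(t),X(q(t))\rangle=\langle\lambda(t),[f_0,X](q(t))\rangle+u(t)\langle\lambda(t),[f_1,X](q(t))\rangle$, valid a.e.\ for every $X\in\mathrm{Vec}(M)$; its right-hand side is bounded (recall $q([0,T])$ is compact and $|u|\le1$), so each $t\mapsto\langle\lambda(t),X(q(t))\rangle$ is Lipschitz. Taking $X=g$ and using $\langle\lambda,g(q)\rangle\equiv0$ on $J$ yields $\langle\lambda(t),[f_0,g](q(t))\rangle+u(t)\langle\lambda(t),[f_1,g](q(t))\rangle=0$ for a.e.\ $t\in J$. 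If the continuous function $t\mapsto\langle\lambda(t),[f_1,g](q(t))\rangle$ is not identically zero on any subinterval of $J$, it is nonzero on a dense open subset, hence on some interval $J'$, and there $u$ agrees a.e.\ with the continuous function $-\langle\lambda,[f_0,g](q)\rangle/\langle\lambda,[f_1,g](q)\rangle$; after modifying $u$ on a null set I would run the standard bootstrap on $J'$ — $\lambda$ solves $\dot\lambda=\overrightarrow{\mathcal{H}}(\lambda,u(t))$, whose right-hand side is affine in $u$ with smooth coefficients, so $u\in C^k(J')\Rightarrow\lambda\in C^{k+1}(J')\Rightarrow$ all $t\mapsto\langle\lambda(t),Y(q(t))\rangle$ are $C^{k+1}\Rightarrow u\in C^{k+1}(J')$ — which, starting from $u\in C^0(J')$, gives $u\in C^\infty(J')$, a contradiction. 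Otherwise $t\mapsto\langle\lambda(t),[f_1,g](q(t))\rangle$ vanishes on a subinterval $J'\subseteq J$, and then the displayed relation forces $t\mapsto\langle\lambda(t),[f_0,g](q(t))\rangle$ to vanish a.e., hence, being continuous, identically on $J'$.

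With the peeling step in hand, I would dispose of the base case in the same way: if $h_1$ does not vanish identically on any subinterval of $(a,b)$, then $h_1\ne0$ on a dense open set, hence on some interval on which $u=\mathrm{sgn}(h_1)$ (see Remark~\ref{rmk:sign}) is locally constant and thus smooth — impossible; so $h_1\equiv0$ on some subinterval $J_0$, and since $\dot h_1=h_{01}$ also $h_{01}\equiv0$ on $J_0$. Then I would iterate. The algebraic remark I would use is that $I(f_1)=\R f_1+I([f_0,f_1])$, so $I(f_1)$ is spanned by $f_1$ together with the iterated brackets $\mathrm{ad}_{f_{i_1}}\cdots\mathrm{ad}_{f_{i_m}}[f_0,f_1]$ ($i_j\in\{0,1\}$, $m\ge0$). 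Since $q([0,T])$ is compact and $I_p(f_1)=T_pM$ for each $p$ on it, a routine covering argument yields an integer $K$ such that $f_1(p)$ and the brackets $\mathrm{ad}_{f_{i_1}}\cdots\mathrm{ad}_{f_{i_m}}[f_0,f_1](p)$ with $m\le K$ already span $T_pM$ for every $p\in q([0,T])$. Starting from $J_0$ (where $h_1$ and $h_{01}$ vanish) and applying the peeling step successively to $[f_0,f_1]$, then to its two brackets with $f_0$ and $f_1$, and so on through all brackets of order $\le K$ — finitely many, so the interval shrinks only finitely often and the previously obtained vanishings persist on the smaller intervals — I would either hit one of the impossible ``smooth $u$'' alternatives, or reach a subinterval $J_*$ on which $\langle\lambda(t),f_1(q(t))\rangle$ and all $\langle\lambda(t),\mathrm{ad}_{f_{i_1}}\cdots\mathrm{ad}_{f_{i_m}}[f_0,f_1](q(t))\rangle$ ($m\le K$) vanish. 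For any $t_*\in J_*$ the covector $\lambda(t_*)$ then annihilates a spanning family of $T_{q(t_*)}M$, so $\lambda(t_*)=0$, contradicting the PMP.

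I expect the main obstacle to be the bootstrap inside the peeling step: it is the only place where density is genuinely produced, by manufacturing — out of an arbitrary interval of $\Sigma_q$ — a sub-interval on which the relation between two consecutive derivatives of a switching-type function pins $u$ down as a smooth function. A second delicate point is keeping the iteration finite, for which the compactness of $q([0,T])$ together with the hypothesis $I_q(f_1)=T_qM$ is essential: it supplies the uniform bracket order $K$, without which the nested intervals could collapse to a point and the argument would fail.
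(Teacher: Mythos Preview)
The paper does not prove this statement: Theorem~\ref{t:AAA} is quoted as a known result from \cite{Agrachev1995}, with no argument supplied in the present paper. So there is no in-paper proof to compare against.

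That said, your argument is correct and is essentially the standard one. The peeling step is sound; in fact the paper uses the same mechanism in a pointwise form in the proof of Proposition~\ref{p:SS}, where the relation $h_{001}+u\,h_{101}=0$ together with $h_{101}\ne0$ is shown to force smoothness of $u$ via the autonomous Hamiltonian \eqref{eq:HamSing}, exactly your bootstrap. Your dichotomy ``either $\langle\lambda,[f_1,g](q)\rangle$ has empty-interior zero set, hence is nonzero on a subinterval, or it vanishes on a subinterval'' is the right case split, and the inductive shrinking of intervals is legitimate because only finitely many brackets (those of order $\le K$) are processed, so the nested intervals stay nontrivial. The compactness argument producing the uniform $K$ is standard and necessary for the reason you identify: without it the nested intervals could collapse before reaching a spanning family. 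Finally, the algebraic observation that $I(f_1)$ is spanned by $f_1$ together with $\mathrm{ad}_{f_{i_1}}\!\cdots\mathrm{ad}_{f_{i_m}}[f_0,f_1]$ follows from $[f_1,f_1]=0$ and the fact that an ideal in $\mathrm{Lie}(f_0,f_1)$ is precisely an $\mathrm{ad}_{f_0}$- and $\mathrm{ad}_{f_1}$-invariant subspace.

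One small stylistic point: in the peeling step you could shortcut the bootstrap by noting, as the paper does in Proposition~\ref{p:SS}, that once $u$ equals a smooth function of $\lambda$ the extremal flow is governed by a smooth autonomous Hamiltonian, hence $\lambda$ and $u$ are automatically smooth; this avoids the explicit $C^k\Rightarrow C^{k+1}$ induction.
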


\begin{theorem}[{\cite[Proposition 2]{AgrachevSigalotti}}]\label{t:AS}
Assume that ${\rm span}\{({\rm ad}^k_{f_0+ f_1}f_1)(q)\mid k\in \N\}=T_qM$ and 
${\rm span}\{({\rm ad}^k_{f_0- f_1}f_1)(q)\mid k\in \N\}=T_qM$
for every $q\in M$.
Consider an extremal trajectory $q:[0,T]\to M$ 
such that 
the union of all bang arcs is open and dense in $[0,T]$.
Then either $\Sigma=\Sigma_0$ or there exists an infinite sequence of concatenated bang arcs. 
\end{theorem}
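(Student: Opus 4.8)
The plan is to convert the hypotheses into statements about the switching function $h_1$, to extract from the failure of both alternatives a Cantor-type set of ``degenerate'' times carrying many bracket relations, and then to iterate those relations until they contradict the spanning assumptions.

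\emph{Reduction.} Density of the union of bang arcs forces every arc to be bang, since a singular arc is open and disjoint from that union, hence empty. On a bang arc $u\equiv\pm1$, so by Remark~\ref{rmk:sign} the sign of $h_1$ is constant along the arc, and by the folklore Proposition recalled above $h_1$ has only finitely many zeros there; moreover $h_1$ vanishes at every endpoint of an arc lying in $(0,T)$, for otherwise, $h_1$ being $C^1$, the adjacent constant-sign control could be prolonged, enlarging the arc and contradicting maximality. In particular the zero set of $h_1$ is closed and nowhere dense and contains $\Sigma$. Recall also that along a bang arc with $u\equiv\pm1$ one has $h_1^{(k)}=\langle\lambda,({\rm ad}^k_{f_0\pm f_1}f_1)(q)\rangle$, the right-hand sides being functions of $t$ continuous on all of $[0,T]$.

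\emph{The perfect set.} Suppose, for contradiction, that $\Sigma\neq\Sigma_0$ and that there is no infinite sequence of concatenated bang arcs. Group the arcs into maximal chains (``blocks''); each block is then finite, its internal switching times are isolated in $\Sigma$, and the set $P$ of block endpoints is closed, nowhere dense, and contains $\Sigma\setminus\Sigma_0$. Any isolated point of $P$ would be simultaneously a left and a right block endpoint, contradicting maximality, so $P$ has no isolated points: being nonempty, it is an uncountable perfect set. Since $h_1$ vanishes at every block endpoint and block endpoints are dense in $P$, we get $h_1\equiv0$ on $P$; applying Rolle to the zeros of $h_1$ accumulating at each point of $P$ gives $h_{01}\equiv0$ on $P$. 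Finally, fix $s\in P\cap(0,T)$. On each side from which $P$ accumulates at $s$, arcs of \emph{both} signs accumulate at $s$: if only $+$-arcs did, then $h_1\geq0$ on a one-sided neighbourhood of $s$, forcing $u\equiv1$ a.e.\ there and hence that neighbourhood into $O$, contradicting $s\in\Sigma$. A second-order Taylor expansion of $h_1$ on the shrinking arcs at $s$ then gives $h_{-01}(s)\geq0\geq h_{+01}(s)$: on a short $-$-arc $\ddot h_1<0$ would make $h_1$ strictly concave with vanishing endpoints, hence positive in the interior, against $h_1\leq0$ there, and the $+$-arc case is symmetric.

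\emph{The bootstrap (main obstacle).} One now pushes these relations to higher order. At each step $k$, the constant sign of $h_1$ on the arcs accumulating at a suitable point of $P$, the vanishing of all lower-order quantities already established there, and Rolle-/Taylor-type estimates on the shrinking arcs together produce new vanishing relations among the functions $h_I$ with $I$ a word in $\{+,-,0,1\}$, valid on a nonempty closed subset of $P$; since $M$ is $n$-dimensional, finitely many steps suffice. This yields a time $s^*$ at which $\lambda(s^*)$ annihilates $({\rm ad}^k_{f_0+f_1}f_1)(q(s^*))$ for every $k\in\N$ (or the same with $f_0-f_1$), contradicting the spanning hypothesis because $\lambda(s^*)\neq0$; the no-singular-arc consequence of the two hypotheses is used throughout, and their spanning content is used precisely here. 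The genuinely delicate point is this induction: the data obtained at each order is only a one-sided inequality, so promoting it to an equality, and keeping track uniformly in $k$ of which of the two families $f_0\pm f_1$ survives and of a common accumulation time, requires a careful analysis of how the blocks are distributed relative to $P$ and of the relative lengths of the arcs as they shrink to a point of $P$.
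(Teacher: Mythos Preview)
The paper does not prove this statement: Theorem~\ref{t:AS} is quoted from \cite{AgrachevSigalotti} in the ``previous results'' subsection and is used without proof. There is therefore no in-paper argument to compare against; I can only assess your proposal on its own terms.

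Your reduction and perfect-set construction are essentially sound (modulo taking the closure of the block endpoints), and they correctly deliver a nonempty perfect set $P\subset\Sigma$ on which $h_1=h_{01}=0$, together with the sign conditions $h_{-01}\geq 0\geq h_{+01}$. The genuine gap is the third step: the ``bootstrap'' is announced but never executed, and you yourself flag it as ``the genuinely delicate point.'' This is not just bookkeeping. At order two you only have one-sided inequalities, and nothing in your outline explains \emph{how} Rolle/Taylor on shrinking arcs upgrades these to equalities at higher order, \emph{how} a single sign in $f_0\pm f_1$ is selected uniformly in $k$, or \emph{why} the nested closed subsets of $P$ carrying successive relations stay nonempty. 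The spanning hypotheses concern the iterated brackets $\mathrm{ad}^k_{f_0\pm f_1}f_1$ for one fixed sign, and you provide no mechanism forcing all of them to be annihilated at a common time. As written, the argument stops at order two and the contradiction is not reached.

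There is also a smaller gap in your ``both signs accumulate'' claim. From ``only $+$-arcs accumulate at $s$ from one side'' you deduce $u\equiv 1$ a.e.\ there and hence that the one-sided neighbourhood lies in $O$. But on $\Sigma\cap(s-\epsilon,s)$ one only knows $h_1=0$, so the PMP places no constraint on $u$; if that set has positive measure the step fails. (The claim is nevertheless true: when only $+$-arcs accumulate, each such arc is its own block, its endpoints lie in $P$, and an induction using Rolle on each arc shows that $\langle\lambda,(\mathrm{ad}^k_{f_0+f_1}f_1)(q)\rangle$ vanishes on $P\cap(s-\epsilon,s)$ for every $k$, contradicting the spanning hypothesis. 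Ironically, this is exactly the kind of bootstrap you need---but it works here only because a single sign is present, which is precisely what fails in the general step you left open.)
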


Theorem~\ref{t:main} 
can be seen as an extension of Theorem~\ref{t:AAA} 
in the sense that it guarantees that, generically with respect to $(f_0,f_1)$, the open set $O_q$ is not only dense but also of countable complement and hence of full measure in $[0,T]$ (see Remark~\ref{rem:smooth}).	
A similar observation can be done for Theorem~\ref{t:AS}, which is generalized by Theorem~\ref{t:main} as follows: generically, for every $k\ge 0$, either $\Sigma=\cup_{j=0}^k \Sigma_j$ or there exists
a subinterval $I$ of $[0,T]$ such that $I\cap \Sigma_k$ is 
 a 
 converging 
 sequence.

  Concerning Theorem~\ref{t:BK}, we can strengthen its conclusion 
 as stated in Corollary~\ref{cor:countableinBK} below. The corollary is a direct consequence of Proposition~\ref{p:intermezzo}, which is a step of the
 proof of Theorem~\ref{t:main} contained in 
 Section~\ref{sec:proof}.
 \begin{corollary}\label{cor:countableinBK}
Generically with respect to the pair $(f_0,f_1)\in \mathrm{Vec}(M)^2$,  
for any extremal triple $(q(\cdot),u(\cdot),\lambda(\cdot))$ on $[0,T]$ 
such that $h_1|_{[0,T]}\equiv 0$, 
 the set $\Omega=\{t\in [0,T]\mid h_{101}(t)\ne 0\}$ has countable complement in $[0,T]$ and  
 $u(t)=-h_{001}(t)/h_{101}(t)$ almost everywhere on $\Omega$.
\end{corollary}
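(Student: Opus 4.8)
The plan is to reduce the statement to Proposition~\ref{p:intermezzo}, which is announced as a by-product of the proof of Theorem~\ref{t:main} in Section~\ref{sec:proof}, and which — as the surrounding discussion makes clear — upgrades ``dense'' to ``countable complement'' for the set where the leading singular-control coefficient $h_{101}$ is nonzero. Concretely, I would first invoke Theorem~\ref{t:BK} to obtain, generically, the identity $u(t)=-h_{001}(t)/h_{101}(t)$ almost everywhere on $\Omega=\{t\in[0,T]\mid h_{101}(t)\ne 0\}$; this part of the conclusion is already contained verbatim in Theorem~\ref{t:BK} and needs no further argument. It then remains only to show that $[0,T]\setminus\Omega=\{t\in[0,T]\mid h_{101}(t)=0\}$ is countable, and this is precisely where Proposition~\ref{p:intermezzo} enters.

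The key point is that a totally singular extremal (one with $h_1\equiv 0$ on $[0,T]$) has $\Sigma_q=[0,T]$ in the worst case, so the Fuller stratification of Theorem~\ref{t:main} is not directly informative; instead one must work with the finer stratification induced by the vanishing of $h_{101}$. I would argue that $Z:=\{t\mid h_{101}(t)=0\}$ is a closed subset of $[0,T]$ (by continuity of $h_{101}$, which follows since $h_1\equiv 0$ forces, via repeated differentiation along $\dot\lambda=\overrightarrow{\mathcal H}$, enough regularity of the relevant bracket pairings), and that its isolated points form a discrete set. The content of Proposition~\ref{p:intermezzo} should then be that, generically, $Z$ cannot contain a perfect subset: at a hypothetical point of $Z$ that is an accumulation of accumulations of $\ldots$ of points of $Z$, one extracts — exactly as in Section~\ref{sec:Start} and Section~\ref{sec:HO} — a growing list of independent annihilation conditions $\langle\lambda(t),f_I(q(t))\rangle=0$ on the cotangent lift, enough to contradict $\lambda(t)\ne 0$ once the order exceeds a dimension-dependent threshold; Thom transversality (as in \cite[Proposition 19.1]{abraham-robbin}, combined with \cite[\S 1.3.2]{goresky2012stratified} to get an open dense set) then removes such configurations on a residual, in fact open dense, set of pairs $(f_0,f_1)$. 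Hence $Z$ is a countable union of discrete sets, i.e. countable, and so is $[0,T]\setminus\Omega$.

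The main obstacle is not in the present corollary — whose proof is a short deduction once Proposition~\ref{p:intermezzo} is available — but in locating the right chain of independent bracket conditions for the \emph{totally singular} regime, where the switching function $h_1$ vanishes identically and therefore the conditions generated in Section~\ref{sec:Start} (which use $h_1=h_{01}=0$ and a dichotomy on $h_{\pm 01}$) must be replaced by conditions built from the singular dynamics $u=-h_{001}/h_{101}$ and its iterated derivatives. I expect the argument to show that each accumulation step of $Z$ yields a new condition of the form $h_J(t)=0$ for a suitable word $J$ whose length grows with the accumulation order, and that genericity guarantees these are independent until their number forces $\lambda(t)=0$; the bookkeeping of which words appear, and the verification that the associated jet map is a submersion, is the technical heart, but it is carried out in Section~\ref{sec:proof} and may here be cited. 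I would close by remarking that, combined with Remark~\ref{rem:smooth}, this shows the singular control $u$ of any totally singular extremal is smooth off a countable set, matching the global conclusion of Theorem~\ref{t:main}.
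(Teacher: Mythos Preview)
Your overall strategy is right --- the corollary is indeed a short deduction from Proposition~\ref{p:intermezzo}, and the formula for $u$ on $\Omega$ follows immediately from $h_{01}\equiv 0$ (you do not even need to invoke Theorem~\ref{t:BK}). But you miss the concrete link between $Z=\{t\mid h_{101}(t)=0\}$ and Proposition~\ref{p:intermezzo}, and your proposed workaround is based on a misconception. Proposition~\ref{p:intermezzo} concerns the set
\[
\Xi=\{t\in [0,T]\mid h_1(t)=h_{01}(t)=0\ \text{and}\ (h_{+01}(t)= 0\ \text{or}\ h_{-01}(t)= 0)\},
\]
and asserts that $\Xi$ is a finite union of discrete sets, hence countable. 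The paper's deduction is simply the inclusion $Z\subset\Xi$: since $h_1\equiv 0$ one has $h_{01}\equiv 0$, whence $h_{001}(\tau)+u(\tau)h_{101}(\tau)=0$ a.e.; this gives $|h_{001}|\le|h_{101}|$ a.e., hence everywhere by continuity (exactly the computation of Proposition~\ref{p:SS}). Thus $h_{101}(t)=0$ forces $h_{001}(t)=0$, so $h_{\pm 01}(t)=0$ and $t\in\Xi$. Countability of $Z$ follows at once.

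Two points in your write-up are off. First, for a totally singular extremal it is not true that ``$\Sigma_q=[0,T]$ in the worst case'': on $\Omega$ the control is smooth (via the Hamiltonian \eqref{eq:HamSing}), so $\Omega\subset O_q$ and in fact $\Sigma_q\subset Z$. Second, and more importantly, there is no need to ``replace'' the conditions of Section~\ref{sec:Start} by new ones tailored to the singular regime. The defining conditions of $\Xi$ are already satisfied on $Z$, so Proposition~\ref{p:intermezzo} applies verbatim; your proposed programme of extracting a separate chain of bracket relations from iterated derivatives of $-h_{001}/h_{101}$ would only reproduce, in a roundabout way, the relations that Proposition~\ref{p:intermezzo} already accumulates along $\Xi$.
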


\subsection{Chattering and singular extremals} Classical instances of the chattering phenomenon occur when trying to join singular and bang arcs along time-optimal trajectories of control systems as in \eqref{eq:si}. Legendre condition \cite[Theorem 20.16]{agrachevbook} holds along singular extremal triples, and imposes the inequality $h_{101}(t)\geq 0$. If the inequality is strict, then the control $u(t)$ is characterized as in Theorem~\ref{t:BK}, but there are significant examples of mechanical problems in which the third bracket $f_{101}$ vanishes identically (e.g. Dubin's car with acceleration \cite[Section 20.6]{agrachevbook}). This case has been intensively studied in \cite{zelikin2012theory}, and the situation that forces the chattering can be essentially summarized as follows.
\begin{theorem}[{\cite[Proposition 20.23]{agrachevbook}}] Assume that the vector fields $f_0$ and $f_1$ satisfy the identity $f_{101}\equiv 0$. Let $q:[0,T]\to M$ be a  time-optimal trajectory of  system~\eqref{eq:si} which is the projection of a unique (up to a scalar factor)
curve $\lambda:[0,T]\to T^*M$ such that 
 $(q(\cdot),u(\cdot),\lambda(\cdot))$ is an extremal triple. Assume moreover that $h_{10001}(t)\neq 0$ on $[0,T]$. Then $q(\cdot)$ cannot contain a singular arc concatenated with a bang arc.
\end{theorem}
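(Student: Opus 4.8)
The plan is to argue by contradiction: suppose $q(\cdot)$ contains a singular arc and a bang arc sharing a switching time $\bar t$, and denote by $\varepsilon\in\{-1,+1\}$ the constant value of the control along the bang arc. The first (purely algebraic) step is to observe that the standing hypothesis $f_{101}\equiv 0$ propagates: by the Jacobi identity and $[f_{01},f_{01}]=0$ one has $f_{1001}=[f_1,[f_0,f_{01}]]=[f_0,[f_1,f_{01}]]=[f_0,f_{101}]\equiv 0$. Consequently, for any extremal triple the switching function obeys the three control-free identities $\dot h_1=h_{01}$, $\ddot h_1=h_{001}$, $\dddot h_1=h_{0001}$, and the control reappears only in the fourth derivative $h_1^{(4)}=h_{00001}+u\,h_{10001}$ (valid on each arc, where $u$ is smooth; globally $h_1\in C^{3,1}([0,T])$, since the trajectory and the covector are bounded).

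Next I would compare the two arcs at $\bar t$. Along the singular arc $h_1\equiv 0$, hence $h_{01}\equiv h_{001}\equiv h_{0001}\equiv 0$ and $h_{00001}+u\,h_{10001}=0$, which determines the singular feedback $u_{\mathrm{sing}}=-h_{00001}/h_{10001}$ — well defined because $h_{10001}\ne 0$, and with values in $[-1,1]$; by continuity all of this, together with the vanishing of $h_1,h_{01},h_{001},h_{0001}$, persists at $\bar t$. Along the bang arc $h_1$ is smooth, $\varepsilon h_1\ge 0$ by Remark~\ref{rmk:sign}, and the first three derivatives of $h_1$ vanish at $\bar t$ by the matching just obtained; hence, whether the bang arc precedes or follows the singular one, $h_1$ vanishes to order at least four at $\bar t$ from the bang side, and if $h_1^{(4)}(\bar t)\ne 0$ then, since $(t-\bar t)^4>0$, the sign condition forces $\varepsilon\,h_1^{(4)}(\bar t)>0$, that is $\bigl(1-\varepsilon\,u_{\mathrm{sing}}(\bar t)\bigr)\,h_{10001}(\bar t)>0$, using $h_1^{(4)}(\bar t)=(\varepsilon-u_{\mathrm{sing}}(\bar t))\,h_{10001}(\bar t)$.

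Now I would bring in the generalized Legendre–Clebsch (Kelley) condition of order two along the optimal singular arc — the order-two analogue of the inequality $h_{101}\ge 0$ recalled above — which in the conventions of the paper reads $h_{10001}\le 0$ on the singular arc; together with $h_{10001}\ne 0$ on $[0,T]$ and continuity this gives $h_{10001}<0$ identically. Since $1-\varepsilon\,u_{\mathrm{sing}}(\bar t)\ge 0$ always, the strict inequality of the previous paragraph is then impossible, so necessarily $h_1^{(4)}(\bar t)=0$, i.e. $u_{\mathrm{sing}}(\bar t)=\varepsilon$; in particular both one-sided limits of $h_1^{(4)}$ at $\bar t$ vanish, so $h_1\in C^4$ near $\bar t$ with its first four derivatives vanishing there. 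At this point one iterates the comparison one derivative at a time: differentiating $h_{00001}+u_{\mathrm{sing}}\,h_{10001}\equiv 0$ along the singular arc expresses the successive derivatives $u_{\mathrm{sing}}^{(j)}(\bar t)$ in terms of the one-sided limits of $h_1^{(4+j)}$ from the bang side, while the sign condition $\varepsilon h_1\ge 0$ on the bang arc together with the boundary constraint $\varepsilon\,u_{\mathrm{sing}}\le 1$ on the singular arc impose, at the first order $m\ge 1$ at which $u_{\mathrm{sing}}-\varepsilon$ fails to vanish at $\bar t$, two sign requirements on $h_1^{(4+m)}(\bar t)$ that cannot both hold — equivalently, sustaining the matching to all orders would force switchings to accumulate at $\bar t$, contradicting the presence of a genuine bang arc, on which $h_1$ has only finitely many zeros, abutting the singular one.

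The main obstacle is this last inductive step, and it is delicate for two reasons. First, one must establish the order-two Legendre–Clebsch inequality with the sign that is consistent with the paper's maximization / time-optimality conventions (equivalently, with the stated order-one inequality $h_{101}\ge 0$), since the whole dichotomy above collapses if the sign of $h_{10001}$ is the other one. Second, and more substantially, $h_1$ is only $C^{3,1}$ across $\bar t$, so every derivative beyond the third must be taken one-sidedly on the individual arc where $h_1$ is genuinely $C^\infty$; the bookkeeping that matches $u_{\mathrm{sing}}^{(j)}(\bar t)$ against the one-sided limits of $h_1^{(4+j)}$ has to be organized so that the clash of signs — or, equivalently, the forced accumulation of switchings — becomes visible at a finite order. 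This is exactly where the Fuller/chattering mechanism enters, and it is the technical core of the statement.
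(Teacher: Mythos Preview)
The paper does not prove this theorem: it is quoted verbatim as \cite[Proposition 20.23]{agrachevbook} in the subsection on chattering and singular extremals, purely as background motivating the Fuller phenomenon, and no argument is given. So there is no ``paper's own proof'' to compare against.

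That said, your outline is the standard route up to the penultimate step: the reduction $f_{101}\equiv 0\Rightarrow f_{1001}\equiv 0$ via Jacobi, the control-independence of $\dot h_1,\ddot h_1,\dddot h_1$, the matching of the first three derivatives at the junction, the formula $h_1^{(4)}(\bar t^{\pm})=(\varepsilon-u_{\mathrm{sing}}(\bar t))\,h_{10001}(\bar t)$, and the generalized Legendre--Clebsch (Kelley) inequality of order two, which in the conventions of the paper (where the order-one condition reads $h_{101}\ge 0$) indeed forces $h_{10001}<0$. This correctly reduces the problem to the boundary case $u_{\mathrm{sing}}(\bar t)=\varepsilon$.

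The genuine gap is your last paragraph. The inductive scheme you sketch does \emph{not} produce a contradiction at any finite order in the smooth category. Carrying out the computation one step further, one finds that $h_1^{(5)}(\bar t)$ from the bang side equals $-\dot u_{\mathrm{sing}}(\bar t)\,h_{10001}(\bar t)$, and the two constraints you invoke --- the sign of $h_1$ on the bang arc and the boundary constraint $\varepsilon\,u_{\mathrm{sing}}\le 1$ on the singular arc --- impose the \emph{same} sign on $\varepsilon\,\dot u_{\mathrm{sing}}(\bar t)$, not opposite signs. The pattern persists at every order: both sides yield compatible inequalities, and a $C^\infty$ (non-analytic) pair of arcs can match to infinite order without contradiction. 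The phrase ``forced accumulation of switchings'' is not a substitute for an argument here, since by hypothesis there is a single bang arc abutting a single singular arc and nothing accumulates. The actual proof in \cite{agrachevbook} does not proceed by this infinite descent; it uses the uniqueness-of-lift hypothesis together with a second-order (variational) optimality argument to exclude the junction directly, rather than by iterated Taylor matching.
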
		  
In particular, under the hypotheses of the theorem, the only possibility for an optimal trajectory to exit a singular arc is through chattering.

\section{Annihilation conditions at Fuller times of an extremal trajectory}\label{sec:Start}

Let us fix an extremal triple $(q(\cdot),u(\cdot),\lambda(\cdot))$ on $[0,T]$. 
The goal of this section is to prove some useful annihilation conditions of functions of the form $h_I$, with $I$ a word in $\mathfrak{A}$ (compare with Definition~\ref{def:word}), at Fuller times, i.e., on $\Sigma\setminus\Sigma_0$.   

Since $h_1$ is (absolutely) continuous and  $u(t)=\mathrm{sgn}(h_1(t))$ for almost every $t$ such that $h_1(t)\neq 0$, then
$$h_1\big|_{\Sigma}\equiv 0.$$
Moreover, between two zeroes of $h_1$, $h_1^{(1)}=h_{01}$ has at least one zero, which yields 
$$h_{01}\big|_{\Sigma\setminus \Sigma_0}\equiv 0.$$

The following proposition states that both 	$h_{001}$ and $h_{101}$ vanish at every 
$t\in\Sigma$ which is at positive distance from 
$\{t\mid h_1(t)\ne 0\}$. 

	\begin{prop}\label{p:SS}
		Let $t\in\Sigma$ be such that 
		$h_1$ is identically equal to zero on 
	a neighborhood 
	of $t$.
 Then $h_{101}(t)=h_{001}(t)=0$.
	\end{prop}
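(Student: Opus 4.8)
The plan is to exploit the hypothesis that $h_1\equiv 0$ on an open neighborhood $J$ of $t$, which forces the control to be singular on $J$, and then to differentiate the identity $h_1\equiv 0$ on $J$. Since $h_1$ is $C^1$ with $\dot h_1=h_{01}$, we immediately get $h_{01}\equiv 0$ on $J$. The key point is to push the differentiation one step further: along the extremal, for any smooth vector field $X$ one has $\frac{d}{dt}\langle\lambda,X(q)\rangle=\langle\lambda,[f_0+uf_1,X](q)\rangle$ a.e., so applying this to $X=[f_0,f_1]$ gives, for a.e.\ $s\in J$,
\[
0=\dot h_{01}(s)=h_{001}(s)+u(s)\,h_{101}(s).
\]
First I would establish this relation carefully, noting that $h_{01}$ is absolutely continuous on $J$ (being of the form $\langle\lambda,Y(q)\rangle$), so that its a.e.\ derivative vanishes wherever $h_{01}\equiv 0$.

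Next I would extract the two scalar conditions $h_{001}(t)=0$ and $h_{101}(t)=0$ separately from the single relation $h_{001}+u\,h_{101}=0$. The obstacle — and I expect this to be the only real subtlety — is that $u$ is only an $L^\infty$ function, so one cannot simply "evaluate at $t$" and read off both terms. The way around this is to observe that $h_{001}$ and $h_{101}$ are themselves continuous functions of $s$ (again of the form $\langle\lambda,Z(q)\rangle$ with $Z$ a fixed smooth vector field and $\lambda,q$ absolutely continuous). So I would argue as follows: on a full-measure subset of $J$ the control $u(s)$ takes values where the maximality condition is meaningful, but more importantly, by the singular-arc theory recalled above (Theorem~\ref{t:BK} applies once genericity is invoked, but to avoid genericity here one argues directly): the identity $h_{001}(s)=-u(s)h_{101}(s)$ holds a.e.\ on $J$ with $h_{001},h_{101}$ continuous. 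If $h_{101}(t)\neq 0$, then $h_{101}$ is bounded away from zero on a smaller neighborhood $J'\ni t$, hence $u(s)=-h_{001}(s)/h_{101}(s)$ is equal a.e.\ to a continuous function on $J'$; but then $u$ has a continuous representative on $J'\subset O$, contradicting $t\in\Sigma$ (recall $\Sigma=[0,T]\setminus O$ and $O$ is the maximal set on which $u$ is smooth — at the very least a continuous representative on $J'$ would place $t$ in $O$, since on a singular arc where $h_{101}\neq 0$ repeated differentiation shows $u$ is in fact smooth). Therefore $h_{101}(t)=0$.

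Finally, with $h_{101}(t)=0$ in hand, I would plug back into $h_{001}(s)+u(s)h_{101}(s)=0$: taking $s\to t$ along a sequence of points where the relation holds and using continuity of $h_{001}$ and $h_{101}$ together with $|u|\le 1$, the term $u(s)h_{101}(s)\to 0$, so $h_{001}(t)=\lim_{s\to t}h_{001}(s)=0$. This completes the argument. The whole proof is elementary differentiation plus the observation that a singular arc with non-vanishing $h_{101}$ carries a smooth control; no transversality or genericity is needed, consistent with the remark in the text that the computations of this section are genericity-free.
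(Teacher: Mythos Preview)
Your proposal is correct and follows essentially the same approach as the paper's proof: derive $h_{001}+u\,h_{101}=0$ a.e.\ on the neighborhood, argue by contradiction that $h_{101}(t)\neq 0$ would force the control to be smooth (hence $t\notin\Sigma$), and then use $|u|\le 1$ and continuity to deduce $h_{001}(t)=0$. The only cosmetic difference is that, for the smoothness step, the paper writes down the explicit autonomous Hamiltonian $H(p)=\langle p,f_0\rangle-\frac{\langle p,f_{001}\rangle}{\langle p,f_{101}\rangle}\langle p,f_1\rangle$ and invokes smoothness of its flow, whereas you sketch a bootstrap (``repeated differentiation''); both arguments are valid and equivalent.
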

	
	\begin{proof}
Let $V$ be a neighborhood of $t$ such that 
$h_1|_V\equiv 0$. Therefore, the same is true for $h_{01}|_V$ and
\begin{equation}\label{h001+uh101=0}
	h_{001}(\tau)+u(\tau)h_{101}(\tau)=0 \quad \mbox{for almost every $\tau\in V$.} 
\end{equation}	
	
Let us first prove that $h_{101}(t)=0$. 
By contradiction and up to reducing $V$, we have that $h_{101}(\tau)\neq 0$ for every $\tau\in V$. 
By \eqref{h001+uh101=0}, moreover, $u(\tau)=-\frac{h_{001}(\tau)}{h_{101}(\tau)}$ for almost every $\tau\in V$.

Notice that the differential system generated by the smooth autonomous Hamiltonian
		\be
			\label{eq:HamSing}
			H(p)=\langle p, f_0(\pi(p))\rangle-\frac{\langle p, f_{001}(\pi(p))\rangle}{\langle p, f_{101}(\pi(p))\rangle}\langle p, f_1(\pi(p))\rangle,
		\ee
is well-defined on $\{p\in T^*M\mid \langle p, f_{101}(\pi(p))\rangle\ne 0\}$	and all its trajectories are smooth.
Since, moreover, the absolutely continuous curve $(\lambda(t),q(t))$ 
satisfies $\dot p=\ovr{H}(p)$  almost everywhere on $V$, we deduce that $V\ni t\mapsto (\lambda(t),q(t))$  is a solution of the Hamiltonian system 
generated by $H$ and that the control $u$ is smooth on $V$, contradicting the fact that $t\in\Sigma$. 

We conclude by showing that also $h_{001}(t)=0$. 
Following \eqref{h001+uh101=0}, 
we have
$$ |h_{001}(\tau)|=|u(\tau)| |{h_{101}(\tau)}|\le |{h_{101}(\tau)}|\quad \mbox{for almost every $\tau\in V$}$$
and then we conclude by continuity of $h_{101}$ and $h_{001}$.
	\end{proof}

	\begin{prop}\label{p:BBB}
		Assume that there exists 
an infinite sequence of concatenated bang arcs converging to $t\in [0,T]$. 
Then either $h_{+01}(t)=0$ or $h_{-01}(t)=0$. 
	\end{prop}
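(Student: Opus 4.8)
The plan is to analyze the behavior of the switching function $h_1$ and its derivatives along the converging sequence of bang arcs. Let $(\omega_j)_{j\in\N}$ be the sequence of concatenated bang arcs converging to $t$, and let $(t_j)_{j\in\N}$ be the associated switching times, so that $t_j\to t$ as $j\to\infty$. At each switching time $t_j$, the control $u$ jumps between $+1$ and $-1$, which by Remark~\ref{rmk:sign} forces $h_1(t_j)=0$. Since $t_j\to t$ and $h_1$ is continuous, we first recover $h_1(t)=0$; moreover, between two consecutive zeros of $h_1$ the function $\dot h_1=h_{01}$ must vanish, so there is a sequence $s_j\to t$ with $h_{01}(s_j)=0$, whence $h_{01}(t)=0$ by continuity of $h_{01}$.

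Next I would look one derivative further, but now carefully distinguishing the sign of $u$ on each arc. On a bang arc $\omega_j$ where $u\equiv+1$ we have, using the differentiation rule from Section~\ref{s:notations}, that $\ddot h_1(\tau)=\frac{d}{d\tau}h_{01}(\tau)=\langle\lambda(\tau),[f_0+f_1,[f_0,f_1]](q(\tau))\rangle=h_{+01}(\tau)$ for $\tau\in\omega_j$; similarly, on a bang arc where $u\equiv-1$ one gets $\ddot h_1(\tau)=h_{-01}(\tau)$ on that arc. The key point is that among the infinitely many arcs $\omega_j$, either infinitely many carry the value $u\equiv+1$ or infinitely many carry $u\equiv-1$ (possibly both); pick the value, say $+1$, that occurs infinitely often, and let $(\omega_{j_k})$ be the corresponding subsequence of arcs. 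On each such $\omega_{j_k}$, $h_{01}$ is $C^1$ with $h_{01}'=h_{+01}$, and $h_{01}$ vanishes at (or has zeros arbitrarily close to) both endpoints of $\omega_{j_k}$ (since $h_{01}$ vanishes somewhere between consecutive zeros of $h_1$, and these zeros cluster at the endpoints). By Rolle's theorem applied on $\omega_{j_k}$, there is a point $r_k\in\omega_{j_k}$ with $h_{+01}(r_k)=h_{01}'(r_k)=0$. Since the arcs $\omega_{j_k}$ shrink to $t$, we have $r_k\to t$, and by continuity of $h_{+01}$ we conclude $h_{+01}(t)=0$. If instead it were the value $-1$ that occurs infinitely often, the same argument yields $h_{-01}(t)=0$. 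In either case the conclusion of the proposition holds.

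The main subtlety to handle carefully is the bookkeeping of zeros of $h_{01}$ near the endpoints of the selected arcs: one must ensure that within each $\omega_{j_k}$ (or a slightly enlarged neighborhood of it inside $O_q$) the function $h_{01}$ genuinely has two distinct zeros, so that Rolle applies and produces an interior critical point. This is where one uses that consecutive switching times $t_j, t_{j+1}$ are themselves limits of nothing problematic — the arcs being bang, $h_1$ has constant sign in the interior of each arc and vanishes at both endpoints, so $\dot h_1=h_{01}$ changes sign across each interior arc and hence vanishes inside it. Passing to the subsequence of arcs with a fixed control value, together with the continuity of $h_{01}$ and $h_{+01}$ (resp.\ $h_{-01}$) on $O_q$ and their extension by continuity to the cluster point $t$, closes the argument. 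I expect no genuine obstacle beyond this careful extraction of the monochromatic subsequence of arcs and the Rolle-type estimate on each.
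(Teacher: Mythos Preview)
Your argument has a genuine gap at the Rolle step. You claim that on each monochromatic arc $\omega_{j_k}$ (say with $u\equiv+1$) the function $h_{01}$ has two distinct zeros, so that Rolle produces a zero of $h_{+01}$ inside $\omega_{j_k}$. But this is not justified: from $h_1(t_{j-1})=h_1(t_j)=0$ and $h_1\ge 0$ on $\omega_j$, Rolle applied to $h_1$ yields only \emph{one} zero $s_j\in\omega_j$ of $h_{01}$. The neighboring zeros $s_{j-1},s_{j+1}$ of $h_{01}$ lie in the adjacent arcs, which carry the opposite control value. When you then try Rolle on $h_{01}$ between $s_{j-1}$ and $s_j$ (or $s_j$ and $s_{j+1}$), the derivative of $h_{01}$ has a jump at the switching time $t_{j-1}$ (resp.\ $t_j$): it equals $h_{-01}$ on one side and $h_{+01}$ on the other. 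The critical point of $h_{01}$ may well fall at the switching time itself, or in the arc of the opposite sign, so you cannot conclude that $h_{+01}$ vanishes. In fact, a direct second-order analysis (looking at the maximum of $h_1$ on a $+$ arc and the minimum on a $-$ arc) only yields the sign conditions $h_{+01}(t)\le 0$ and $h_{-01}(t)\ge 0$; to leading order these are compatible with a periodic switching pattern of \emph{constant} arc length, so the qualitative Rolle mechanism alone cannot force either quantity to vanish.

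The paper's proof proceeds along an entirely different, quantitative route. One argues by contradiction: if both $h_{+01}(t)\ne 0$ and $h_{-01}(t)\ne 0$, then near $t$ both are bounded away from zero, and a second-order Taylor expansion of $h_1$ across each bang arc shows that consecutive arc lengths $\sigma_i$ (for $u=+1$) and $\tau_i$ (for $u=-1$) satisfy $\tau_i\approx\sigma_i$ and $\sigma_{i+1}=\sigma_i+O(\sigma_i^2)$. An elementary lemma then shows that any positive sequence obeying $t_{i+1}=t_i+O(t_i^2)$ has divergent sum, contradicting the assumption that the concatenation of arcs has finite total length (i.e., converges to $t$). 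The missing idea in your approach is precisely this length estimate; the difficulty is not bookkeeping but the fact that the vanishing of $h_{+01}$ or $h_{-01}$ is a \emph{quantitative} consequence of the summability of the arc lengths.
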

	
	\begin{proof}
	First notice that $t\in \Sigma\setminus \Sigma_0$.
		Assume by contradiction that neither $h_{+01}(t)$ nor $h_{-01}(t)$ is equal to zero. 
		Consider a neighborhood $I$ of $t$ in  $[0,T]$ (with respect to the topology induced by $\R$) such 
		that
		\be
			\frac{1}{C}\leq |h_{+01}(s)|,|h_{-01}(s)|\leq C,\quad \forall\,s\in I,
		\ee
for some positive constant $C>0$.

		By assumption, there exists a sequence of concatenated bang arcs in $I$, whose lengths we denote by  $\{\sigma_i\}_{i\in \N}\cup \{\tau_i\}_{i\in\N}\subset(0,+\infty)$, 
		with the agreement that $u\equiv 1$ (respectively, $u\equiv-1$) on the intervals of length $\sigma_i$ (respectively, $\tau_i$)
		and that the  arc of length $\sigma_i$ is concatenated with the  arc of length $\tau_i$, which is concatenated with the arc of length $\sigma_{i+1}$ and so on.
		Without loss of generality,  the bang arcs converge towards $t$ from the left, so that we can further assume that the  arc of length  $\sigma_i$ is concatenated at its right 
		with the 
		 arc of length $\tau_i$
		 (see Figure~\ref{f:4arcs}).

		\begin{figure}[h!]
			\includegraphics[scale=.8]{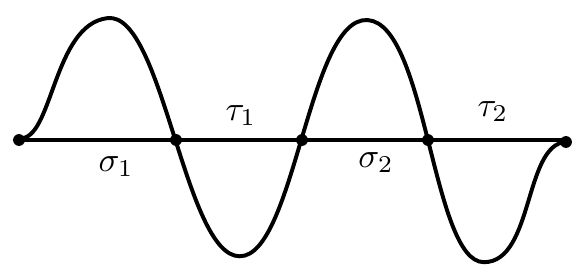}
			\caption{A concatenation of bang arcs\label{f:4arcs}}
		\end{figure}

By convention, let $0$ be the starting time of the sequence in Figure~\ref{f:4arcs}.	
		Taylor's formula yields that
		\be
			\label{eq:firstorder}
			\sigma_1=-\frac{2h_{01}(0)}{h_{+01}(0)}+O(\sigma_1^2),
		\ee
		where the notation $O(\sigma_1^2)$ has the following meaning: using an analogous Taylor expansion for each positive bang arc of length $\sigma_k$, we obtain a reminder $\rho_k$ 
		such that
$\frac{\rho_k^2}{\sigma_k^2}$ is uniformly bounded. 
		We deduce from \eqref{eq:firstorder} that $|h_{01}(0)|\approx \sigma_1$, where this 
		notation is used to  indicate that 
		\[ \frac1c\le \frac{|h_{01}(0)|}{\sigma_1}\le c \]
		for some constant $c>0$.
Moreover, from the expansion $h_{01}(\sigma_1)=h_{01}(0)+\sigma_1h_{+01}(0)+O(\sigma_1^2)$, we get
		\be
			\label{eq:secder}
			h_{01}(\sigma_1)=-h_{01}(0)+O(\sigma_1^2).
		\ee
	
		Combining these two relations we obtain \be\label{eq:comp1}\tau_1\approx|h_{01}(\sigma_1)|\approx \sigma_1.\ee
	
		The same computations also imply  that
		\be\label{eq:series}\sigma_2\approx h_{01}(\sigma_1+\tau_1)=-h_{01}(\sigma_1)+O(\tau_1^2)=h_{01}(0)+O(\sigma_1^2).\ee
		In particular, the sequence $\sigma_i$ satisfies the relation $\sigma_{i+1}=\sigma_i+O(\sigma_i^2)$. The contradiction is then a consequence of  Lemma~\ref{l:below} below.
	\end{proof}
	
	\begin{lemma}\label{l:below}
		Let $\{t_i\}_{i\in\N}$ be a sequence of positive numbers satisfying the relation
		\be t_{i+1}=t_i+O(t_i^2).\ee 
		Then, $\sum_{i=1}^\infty t_i=+\infty$.
	\end{lemma}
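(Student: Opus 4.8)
The plan is to show that the relation $t_{i+1}=t_i+O(t_i^2)$ forces the sequence to decay at most like $1/i$, which is exactly the borderline rate making $\sum t_i$ diverge. First I would unpack the $O(t_i^2)$ hypothesis: there is a constant $C>0$ such that $|t_{i+1}-t_i|\le C t_i^2$ for all $i$. In particular $t_{i+1}\le t_i + C t_i^2 = t_i(1+Ct_i)$ and $t_{i+1}\ge t_i - Ct_i^2 = t_i(1-Ct_i)$. The positivity of the sequence together with the lower bound already tells us something: if ever $t_i\ge 1/C$ for infinitely many $i$ we are trivially done since then $\sum t_i\ge \sum 1/C=+\infty$, so I may assume $t_i<1/C$ for all large $i$, and after discarding finitely many terms (which does not affect divergence) assume $0<t_i<1/C$ for every $i$, so that $1-Ct_i>0$.

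The key step is to control the reciprocals $u_i:=1/t_i$. From $t_{i+1}\le t_i(1+Ct_i)$ I get
\be
u_{i+1}=\frac{1}{t_{i+1}}\ge \frac{1}{t_i(1+Ct_i)}=\frac{u_i}{1+Ct_i}\ge u_i(1-Ct_i)=u_i-C,
\ee
using $\frac{1}{1+x}\ge 1-x$ for $x\ge 0$ and $t_i u_i=1$. Hence $u_{i+1}\ge u_i - C$, which iterates to $u_i\le u_1 + C(i-1)$, i.e.
\be
t_i=\frac{1}{u_i}\ge \frac{1}{u_1+C(i-1)}.
\ee
Since $\sum_{i\ge 1}\frac{1}{u_1+C(i-1)}$ diverges (comparison with the harmonic series, the summand being $\sim \frac{1}{Ci}$), we conclude $\sum_{i=1}^\infty t_i=+\infty$.

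The one point demanding a little care — the main (though modest) obstacle — is justifying that we may assume $t_i<1/C$ for all $i$ after truncation: this needs the observation that either $t_i\ge 1/C$ infinitely often (immediate divergence) or eventually $t_i<1/C$, and that finitely many initial terms are irrelevant to the divergence of the series. Once inside the regime $0<t_i<1/C$, the elementary inequality $\frac{1}{1+x}\ge 1-x$ and the telescoping bound on $u_i$ finish the argument cleanly; no delicate estimate on the implied constant in the $O$ is needed beyond its mere existence.
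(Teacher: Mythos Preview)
Your overall strategy---bounding the reciprocals $u_i=1/t_i$ from above by a linear function of $i$ and then comparing with the harmonic series---is sound and in fact gives a more quantitative conclusion than the paper's argument. However, the execution has a directional error. You derive $u_{i+1}\ge u_i - C$ from the \emph{upper} bound $t_{i+1}\le t_i(1+Ct_i)$; that inequality is correct, but it iterates to $u_i \ge u_1 - C(i-1)$, not to $u_i \le u_1 + C(i-1)$ as you claim. A lower bound on $u_i$ is an upper bound on $t_i$, which is useless for proving divergence.

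The fix is to start from the \emph{lower} bound $t_{i+1}\ge t_i(1-Ct_i)$. After restricting (by the same case analysis you already made, now with threshold $1/(2C)$) to the regime $Ct_i\le 1/2$, you obtain
\[
u_{i+1}\le \frac{u_i}{1-Ct_i}\le u_i(1+2Ct_i)=u_i+2C,
\]
using $\frac{1}{1-x}\le 1+2x$ for $0\le x\le 1/2$; this iterates correctly to $u_i\le u_1+2C(i-1)$ and gives $t_i\ge \frac{1}{u_1+2C(i-1)}$, whence divergence by comparison with the harmonic series. For contrast, the paper argues by contradiction: assuming $\sum t_i<\infty$ (so $t_i\to 0$), it takes logarithms in the product bound $t_{i+1}\ge t_1\prod_{j\le i}(1-ct_j)$ to get $\log t_{i+1}\ge \log t_1 - c'\sum_{j\le i} t_j$, and then notes that the left side tends to $-\infty$ while the right side stays bounded. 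Your (corrected) direct argument is a genuinely different route and yields the explicit decay rate $t_i\gtrsim 1/i$, whereas the paper's proof is shorter but non-quantitative.
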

	
	\begin{proof}
	Let $c>0$ be such that
		\be\label{eq:constant}t_{i+1}\geq t_i(1-ct_i), \quad \forall\;i\in\N.\ee
		Assume by contradiction that $\sum_{i=1}^\infty t_i<+\infty$. In particular,  $t_i\to 0$.

Up to discarding the first terms of the sequence $\{t_i\}_{i\in\N}$, we can assume that $1-ct_i>0$ for all $i\in\N$.	
Iterating \eqref{eq:constant} we deduce that 		
		\be\label{eq:product}t_{i+1}\geq t_1 \prod_{j=1}^i (1-ct_j), \quad \forall\;i\in\N.\ee
Hence, for every $i\in \N$, 
\[\log t_{i+1}\geq \log t_1+ \sum_{j=1}^i \log(1-ct_j)\ge \log t_1-  c'\sum_{j=1}^i t_j,\]
where $c'>0$ is such that $\log(1-ct_j)\ge - c' t_j$ for all $j\in \N$. The contradiction comes by noticing that the 
left-hand side goes to $-\infty$ as $i\to \infty$, while the right-hand side stays uniformly bounded.
	\end{proof}

We say that an arc is \emph{bi-concatenated} if it is concatenated both at its right and at its left with other arcs. 	 
	
	\begin{prop}\label{p:NC}
		Let $I$ be a bang arc compactly contained in $(0,T)$ and which is not bi-concatenated. Then there exists $t\in\bar{I}$ such that either $h_{+01}(t)=0$ or $h_{-01}(t)=0$.
	\end{prop}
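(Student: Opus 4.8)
The plan is to argue by contradiction: assuming that neither $h_{+01}$ nor $h_{-01}$ vanishes anywhere on $\bar I$, I will show that $h_1$ is forced to be strictly monotone on $\bar I$, which is impossible since $h_1$ vanishes at both endpoints of $I$.

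First I would fix notation and record the elementary facts. Write $I=(a,b)$; since $I$ is compactly contained in $(0,T)$ one has $0<a<b<T$, so $a,b\in\Sigma$ and hence $h_1(a)=h_1(b)=0$. Up to replacing $f_1$ by $-f_1$ (which interchanges the letters $+$ and $-$, and hence the roles of $h_{+01}$ and $h_{-01}$) I may assume $u\equiv 1$ on $I$; since $u=\mathrm{sgn}(h_1)$ wherever $h_1\ne 0$, continuity of $h_1$ gives $h_1\ge 0$ on $\bar I$. Recall that $h_1\in C^1([0,T])$ with $\dot h_1=h_{01}$, that $h_{01}$ and $h_{+01}$ are continuous on $[0,T]$, and that along the arc $I$, where $u\equiv 1$, the PMP dynamics yield $\dot h_{01}=h_{+01}$ (because $f_{+01}=f_{001}+f_{101}$).

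Next I would exploit the hypothesis that $I$ is not bi-concatenated: one of its endpoints, say $b$ (the other case being symmetric after exchanging the two endpoints), is not shared with another arc. I would first observe the topological fact that this forces $(b,b+\delta)\cap\Sigma\ne\emptyset$ for every $\delta>0$ — otherwise a right half-neighbourhood of $b$ would be contained in $O$, and the connected component of $O$ containing it would be an arc with left endpoint $b$, concatenated with $I$. Choosing $s_k\in\Sigma$ with $s_k\searrow b$ we get $h_1(s_k)=0=h_1(b)$, so Rolle's theorem provides $\xi_k\in(b,s_k)$ with $h_{01}(\xi_k)=\dot h_1(\xi_k)=0$; letting $k\to\infty$ and using continuity of $h_{01}$ gives $h_{01}(b)=0$.

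Finally I would close the argument. Under the contradiction hypothesis $h_{+01}$ is continuous and nowhere zero on the compact interval $\bar I$, hence of constant sign there; since $\dot h_{01}=h_{+01}$ on $(a,b)$, the continuous function $h_{01}$ is strictly monotone on $\bar I$. Together with $h_{01}(b)=0$ this forces $h_{01}$ to have a fixed strict sign on $[a,b)$, so $\dot h_1=h_{01}$ has a fixed strict sign on $(a,b)$ and $h_1$ is strictly monotone on $\bar I$ — contradicting $h_1(a)=h_1(b)=0$. Hence $h_{+01}$ vanishes somewhere on $\bar I$ (and $h_{-01}$ does, in the symmetric case $u\equiv -1$ on $I$). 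The only slightly delicate point is the bookkeeping of the second step: correctly translating ``not bi-concatenated'' into an accumulation of $\Sigma$ at one endpoint, and keeping track of which endpoint and which of $h_{\pm01}$ is relevant according to the sign of $u$ on $I$; everything else is Rolle's theorem together with the fact that a nowhere-vanishing continuous function on a compact interval has constant sign.
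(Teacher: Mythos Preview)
Your proof is correct and follows the same overall strategy as the paper: first establish $h_{01}=0$ at the non-concatenated endpoint $b$, then use two Rolle-type steps inside $I$ to force a zero of $h_{+01}$. The paper, however, obtains $h_{01}(b)=0$ via a case split: when $h_1\equiv 0$ on a one-sided neighbourhood of $b$ it invokes Proposition~\ref{p:SS} (applied at nearby points of $\Sigma$, giving $h_{001}=h_{101}=0$ there and hence $h_{+01}(b)=0$ by continuity), and otherwise it extracts a sequence of arcs to the right of $b$, on each of which Rolle yields a zero of $h_{01}$. Your argument is more economical: you simply pick $s_k\in\Sigma$ with $s_k\searrow b$ and apply Rolle to $h_1$ on $[b,s_k]$, which handles both cases at once and avoids the appeal to Proposition~\ref{p:SS}. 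The final step is the same in both proofs, just phrased by you in contrapositive form (nonvanishing of $h_{+01}$ forces strict monotonicity of $h_{01}$, then of $h_1$) rather than as two successive applications of the mean value theorem.
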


	\begin{proof}
	Without loss of generality, assume  that $u\equiv 1$ on $I=(t_1,t_2)$  
		and that 
$I$ is not concatenated with any other arc at $t_2$. In particular, $t_2$ is a cluster point  for $\Sigma\cap (t_2,T]$. 
If $h_1\equiv 0$ on a right neighborhood of $t_2$, then the conclusion follows from
Proposition~\ref{p:SS} and the continuity of $h_{+01}$ and $h_{-01}$. 		

We can then assume that 
 there exists a sequence of times 
converging from above to $t_2$ and at which $h_1$ is not zero. 
Then, necessarily, 
there exist a sequence of arcs $I_n$ converging to $t_2$. 
Pick, for every $n\in \N$ a time $\tau_n\in I_n$ 	such that $h_{01}(\tau_n)=0$. By construction, the sequence $(\tau_k)_{k\in\N}$ converges to $t_2$ and, by continuity, we deduce that also $h_{01}(t_2)=0$.

Since $h_1(t_1)=h_1(t_2)=0$, then by the mean value theorem $h_{01}$ vanishes at an interior point of $I$, and this in turns implies that $\frac{d}{dt}h_{01}|_{I}=h_{+01}|_I$ also vanishes somewhere on $I$. 
	\end{proof}

The main result of the section is the following theorem. 
	
	 \begin{theorem}\label{thm:start}
Let $t\in\Sigma\setminus\Sigma_0$. 
		Then $h_1(t)=h_{01}(t)=0$ and, in addition, either $h_{+01}(t)=0$ or $h_{-01}(t)=0$. 
	 \end{theorem}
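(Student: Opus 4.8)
The plan is to prove Theorem~\ref{thm:start} by a case analysis on the local structure of $\Sigma$ near a point $t\in\Sigma\setminus\Sigma_0$, reducing in each case to one of the three preparatory propositions (Proposition~\ref{p:SS}, Proposition~\ref{p:BBB}, Proposition~\ref{p:NC}). The equalities $h_1(t)=h_{01}(t)=0$ are already established at the start of the section: $h_1$ vanishes on all of $\Sigma$ because $u=\operatorname{sgn}(h_1)$ a.e.\ where $h_1\ne 0$, and $h_{01}=\dot h_1$ must have a zero between any two consecutive zeros of $h_1$, hence vanishes on $\Sigma\setminus\Sigma_0$. So the content is the disjunctive conclusion $h_{+01}(t)=0$ or $h_{-01}(t)=0$.

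First I would dichotomize according to whether $h_1$ vanishes identically on a neighborhood of $t$. If it does, then $t$ satisfies the hypothesis of Proposition~\ref{p:SS}, which gives $h_{101}(t)=h_{001}(t)=0$; since $f_{\pm01}=f_{001}\pm f_{101}$, linearity of $\lambda(t)$ yields $h_{+01}(t)=h_{001}(t)+h_{101}(t)=0$ (and likewise $h_{-01}(t)=0$), which is even stronger than what is claimed. If $h_1$ does \emph{not} vanish on any neighborhood of $t$, then there is a sequence of times $s_k\to t$ with $h_1(s_k)\ne 0$; each such $s_k$ lies in a bang arc, so $t$ is a cluster point of bang arcs. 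I would then split this case further according to whether infinitely many \emph{distinct} bang arcs accumulate at $t$, or whether only finitely many do.

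If infinitely many distinct bang arcs accumulate at $t$, one needs a converging sequence of \emph{concatenated} bang arcs in order to invoke Proposition~\ref{p:BBB}. This is where the only real subtlety lies: a priori the bang arcs clustering at $t$ could be separated by singular arcs rather than directly concatenated. I would argue that, up to passing to a subsequence and working on one side of $t$, one can extract an infinite chain of consecutive bang arcs converging to $t$ — either because infinitely many of the accumulating bang arcs are mutually concatenated, or, if the bang arcs are repeatedly interrupted by singular arcs accumulating at $t$, one of those singular arcs is itself a bang-or-singular arc $I$ compactly contained in $(0,T)$ that fails to be bi-concatenated on the relevant side, bringing us into the scope of Proposition~\ref{p:NC}. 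In the genuinely chattering subcase, Proposition~\ref{p:BBB} applies directly and gives $h_{+01}(t)=0$ or $h_{-01}(t)=0$. Finally, if only finitely many bang arcs accumulate at $t$ from a given side, then some one of them, say $I$, has $t$ as an endpoint and is not concatenated with a further arc on that side, so $I$ is a bang arc compactly contained in $(0,T)$ which is not bi-concatenated; Proposition~\ref{p:NC} then produces a point $\bar t\in\bar I$ with $h_{+01}(\bar t)=0$ or $h_{-01}(\bar t)=0$ — but one must still push this to the point $t$ itself. For that I would use continuity of $h_{+01}$ and $h_{-01}$ together with the fact that, shrinking $I$ toward its endpoint $t$ (every bang arc abutting $t$ without further concatenation can be taken arbitrarily short, since $t$ is an accumulation point of $\Sigma$), the point $\bar t$ can be taken arbitrarily close to $t$; passing to the limit along a subsequence on which the same one of $h_{\pm01}$ vanishes gives the claim at $t$.

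The main obstacle, as indicated, is the combinatorial bookkeeping of arcs near $t$ in the second case: making precise that an accumulation of bang arcs at $t$ that is not itself a clean concatenated chain must, on one side of $t$, exhibit either a concatenated chain (for Proposition~\ref{p:BBB}) or an isolated non-bi-concatenated bang arc (for Proposition~\ref{p:NC}), and then transporting the conclusion of Proposition~\ref{p:NC} from an auxiliary point $\bar t$ back to $t$ by a shrinking/continuity argument. Everything else is a routine assembly of the three propositions and the opening observations of the section.
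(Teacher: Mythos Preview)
Your overall strategy matches the paper's: split according to whether $h_1$ vanishes identically near $t$, and in the nontrivial case find a sequence of nearby points where $h_{+01}$ or $h_{-01}$ vanishes, then pass to the limit. However, there is a genuine gap in your treatment of singular arcs.

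You assert that each $s_k$ with $h_1(s_k)\ne 0$ lies in a \emph{bang} arc. This is false: an arc is a connected component of $O$, and it is singular as soon as the control is not identically $\pm 1$ on the \emph{whole} component. A singular arc may perfectly well contain a subinterval on which $h_1\ne 0$ (and hence $u=\pm 1$ there) together with another subinterval on which $h_1\equiv 0$ and $u$ takes intermediate values. So $s_k$ may sit in a singular arc $\omega_k$, and you need a separate argument to produce a nearby zero of $h_{\pm 01}$ in that case. The paper handles this directly: on such a singular arc $h_1$ attains a positive maximum or negative minimum while also vanishing on a subinterval, so $h_1$ has an inflection point in $\omega_k$; since $u$ is smooth there and $\ddot h_1=h_{001}+u\,h_{101}$, the convex combination $h_{001}+u\,h_{101}$ vanishes at that inflection point, forcing one of $h_{+01},h_{-01}$ to vanish.

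The same omission resurfaces in your combinatorial analysis of concatenations. When the maximal chain of bang arcs from $\omega_k$ towards $t$ terminates, it may terminate at a bang arc that \emph{is} bi-concatenated, but with a \emph{singular} arc. In that situation neither Proposition~\ref{p:BBB} nor Proposition~\ref{p:NC} applies (the latter is stated only for bang arcs, and the terminal bang arc is bi-concatenated). The paper's case~(iii) treats this explicitly, again via an inflection-point argument: either $h_1$ is not identically zero on the adjacent singular arc, and one finds an inflection point there, or $h_1\equiv 0$ on the singular arc, whence $h_{01}=0$ at the junction and the bang arc itself carries an inflection point of $h_1$ where $h_{\pm 01}$ vanishes. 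Your sentence suggesting that ``one of those singular arcs \dots\ fails to be bi-concatenated, bringing us into the scope of Proposition~\ref{p:NC}'' does not work: an interposed singular arc is bi-concatenated by construction, and Proposition~\ref{p:NC} does not cover singular arcs anyway. Once you add the inflection-point argument for singular arcs, your outline becomes the paper's proof.
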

	\begin{proof}
	We already noticed that $h_1$ vanishes on $\Sigma$ and $h_{01}$ on $\Sigma\setminus\Sigma_0$. We are going to prove the theorem by showing that there exists a sequence of points converging to $t$ at which either $h_{+01}$ or 
$h_{-01}$ vanishes.

Since $t\not\in \Sigma_0$ and 
thanks to Proposition~\ref{p:SS}, we can assume without loss of generality that
$h_1$ does not vanish identically on a neighborhood of $t$. Hence,  
there exists a sequence $(\tau_n)_{n\in \N}\subset[0,T]$ converging to $t$ such that $h_1(\tau_n)\ne 0$ for every $n\in\N$. Each $\tau_n$ is contained in an arc $\omega_n$.  If the arc is singular, then 
it contains a nonempty subinterval on which $h_1\equiv 0$. Since moreover $h_1$ has either a positive maximum or a negative minimum on $\omega_n$, we deduce 
that there exists an inflection point of $h_1$  on $\omega_n$ at which $h_{+01}$ or 
$h_{-01}$ vanishes. 

We can then assume without loss of generality  that $\omega_n$ is a bang arc for every $n\in\N$. Let us consider the maximal concatenation of bang arcs from $\omega_n$ towards $t$. Three possibilities occur: (i) the concatenation is infinite
 and converges to a point between $\tau_n$ and $t$, 
(ii) the concatenation stops with a bang arc which is not bi-concatenated, and (iii) the concatenation stops with a bang arc concatenated with a singular one. 
In each of the three cases, we prove that there exists a point between $\omega_n$ and $t$ at which either $h_{+01}$ or 
$h_{-01}$ vanishes. 
In cases (i) and (ii) the conclusion follows from Propositions~\ref{p:BBB} and \ref{p:NC} respectively.
In the case of a bang arc concatenated with a singular one, either $h_1$ 
does not vanish everywhere on the singular arc, and we  
deduce as above that there exists an inflection point of $h_1$  on the singular arc at which $h_{+01}$ or 
$h_{-01}$ vanishes, or $h_{01}=0$ at the junction of the two arcs and then the bang arc contains an inflection point of $h_1$  at which $h_{+01}$ or 
$h_{-01}$ vanishes. This concludes the analysis in case (iii) and hence the proof of the theorem.
	\end{proof}

\section{High-order Fuller points and genericity results}\label{sec:HO}

In this section we look at the new dependence conditions appearing for accumulations of Fuller points of order higher than one. 
We start by introducing some useful notation.

\begin{remark}\label{r:unnumbered}
	For any given word $J=(j_1,\dots,j_r)\in\mathfrak{A}^r$, with $r\geq 3$, $j_{r-1}=0$, $j_r=1$, and at least
	one $j_k$ in $\{+,-\}$,
	 an easy inductive argument proves that, with the notations of Definition \ref{def:word}, we can 
	decompose $f_J$ as 
	\be
	f_J=f_{J_1}+\dotsb+f_{J_l},
	\ee
	where $J_1,\dotsc, J_l$ are all words of length $r$ written only with letters in $\{0,1\}$, ending with the string $(01)$ and such that, if $|J_i|_a$ counts the number of occurrences of the letter $a$ in $J_i$, then
	\be
	|J_1|_0=\max_{i=1,\dots,l}|J_i|_0,\quad\textrm{and}\quad 	|J_2|_1=\max_{i=1,\dots,l}|J_i|_1.
	\ee
	Moreover, $J_1$ and $J_2$ are uniquely determined by this requirement.
\end{remark}

\begin{defi}\label{def:relations}
Let $N\in\N$. A function	$S:T^*M\times J^NM\times J^NM\to\R$ is said to be a \emph{simple relation of degree} $d\le N$ 
if there exists  a word  $I\in\mathfrak{A}^d$ of length $d$ such that
$S=S_I$, where 
\begin{equation}\label{eq:simple_relation}
S_I(\lambda,j^N_q(f_0),j^N_q(f_1))=\langle\lambda, f_I(q)\rangle,\quad q=\pi(\lambda).
\end{equation}
Similarly,  
	we call $Q:T^*M\times J^NM\times J^NM\to\R$ a \emph{polynomial relation} if there exist 
	$l,d_1,\dots, d_l\in\N\setminus\{0\}$ and words $I_1\in\mathfrak{A}^{d_1},\dots, I_l\in\mathfrak{A}^{d_l}$ such that
	\begin{equation}
	\label{eq:polynomial_relation}
		Q(\lambda,j^N_q(f_0),j^N_q(f_1))\in\R[S_{I_1}(\lambda,j^N_q(f_0),j^N_q(f_1)),\dots,S_{I_l}(\lambda,j^N_q(f_0),j^N_q(f_1))].
	\end{equation}
	Moreover, we set $\mathrm{deg}(Q)=\max\{d_1,\dots,d_l
	\}$.
	
	Finally, given two simple relations $S_I,S_J$, with a slight abuse of notation we say that the Poisson bracket $\{S_I,S_J\}$ between $S_I$ and $S_J$ is the simple relation $S_{IJ}$, where $IJ$ is defined by concatenation of words. We extend the Poisson bracket notation to polynomial relations by linearity and the Leibnitz rule.  
\end{defi}

In the following two lemmas we show how to derive new algebraic conditions on the jets of the vector fields $f_0$ and $f_1$ when increasing the order of the Fuller point. 

\begin{lemma}\label{lemma:Jets}
	Let $l,d_1,\dots, d_l\in\N\setminus\{0\}$ and consider $l$ words $I_1\in\mathfrak{A}^{d_1},\dots,I_l\in\mathfrak{A}^{d_l}$ with $d_j<d_l$ for every $j<l$ and 
	$I_l=(+I_{l-1})$, where we denote by $(+I_{l-1})$ the concatenation of the letter $+$ and the word $I_{l-1}$. 
	Fix an integer $N> d_l$ and  consider the family of simple relations $S_j=S_{I_j}$, $1\leq j\leq l$, using the notation introduced in \eqref{eq:simple_relation}. 
	Define the set $\mathcal{B}
	\subset T
	^*M\times J
	^NM\times J
	^NM$ by 
	\begin{align*}
		\mathcal{B}
		=\bigg\{(\lambda, j^N_q(f_0),j_q^N(f_1))\mid q=\pi(\lambda),\;&(f_0,f_1)\in {\rm Vec}(M)^2,\\ &S_1(\lambda, j^N_q(f_0),j_q^N(f_1))=\dots=S_l(\lambda, j^N_q(f_0),j_q^N(f_1))=0\bigg\}.
	\end{align*}
	If 
	 $(q(\cdot), u(\cdot),\lambda(\cdot))$ is an extremal triple on $[0,T]$ for the time-optimal control problem \eqref{eq:si} associated with the pair $(f_0,f_1)$,
and if the sequence $\{t_i\}_{i\in\N}\subset[0,T]$ is  such that
	\begin{itemize}
		\item [i)] $(\lambda(t_i), j^N_{q(t_i)}(f_0),j_{q(t_i)}^N(f_1))\in\mathcal{B}
		$ for every $i\in\N$,
		\item [ii)] there exists $t_\infty=\lim_{i\to\infty}t_i$,
	\end{itemize}
	then there exists a further simple relation
	$$S_{l+1}\in\left\{S_{(-I_{l-1})},S_{(-I_{l})},S_{(+I_{l})} \right\}$$
	such that 
	$$(\lambda(t_\infty), j^N_{q(t_\infty)}(f_0),j_{q(t_\infty)}^N(f_1))\in\mathcal{B}
	\cap\{S_{l+1}=0\}.$$
	Finally, defining  for every $q\in M$ the set $\mathcal{B}'_q\subset T^*_qM\times J_q^NM\times J_q^NM$ by 
\begin{align*}
\mathcal{B}'_q
	=\bigg\{(\lambda, j^N_q(g_0),j_q^N(g_1))\mid \lambda&\in T^*_q M\setminus\{0\},\;(g_0,g_1)\in {\rm Vec}(M)^2,\;g_0(q)\wedge g_1(q)\neq 0,\\
&	S_1(\lambda, j^N_q(g_0),j_q^N(g_1))=\dots=S_l(\lambda, j^N_q(g_0),j_q^N(g_1))=0\bigg\},
\end{align*}
if the codimension of $\mathcal{B}'_q$ in $T_q^*M\times J_q^NM\times J_q^NM$ 
is equal to $l$, then 
	$$\mathrm{codim}_{T_q^*M\times J_q^NM\times J_q^NM}(\mathcal{B}'_q\cap\{S_{l+1}=0\})=l+1.$$
\end{lemma}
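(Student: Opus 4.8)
The plan is to prove the two assertions of Lemma~\ref{lemma:Jets} in turn: first the existence of the new vanishing simple relation at the accumulation point $t_\infty$, and then the codimension count.

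\smallskip

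\noindent\textbf{Step 1: producing the new relation at $t_\infty$.}
Consider the function $\phi(t)=S_l(\lambda(t),j^N_{q(t)}(f_0),j^N_{q(t)}(f_1))=h_{I_l}(t)$. Since $I_l=(+I_{l-1})$ ends in the string $(01)$ (because $I_{l-1}$ does, by the standing hypothesis that makes $S_{l-1}$ a well-defined simple relation of the required form) the function $h_{I_{l-1}}$ is of class $C^1$ and, differentiating along the extremal flow using $\dot\lambda=\ovr{\mathcal{H}}(\lambda,u)$ exactly as in Section~\ref{s:notations}, one gets
\[
\frac{d}{dt}h_{I_{l-1}}(t)=\langle\lambda(t),[f_0+u(t)f_1,f_{I_{l-1}}](q(t))\rangle=h_{(0I_{l-1})}(t)+u(t)\,h_{(1I_{l-1})}(t).
\]
Rewriting $f_0=\tfrac12(f_++f_-)$ and $f_1=\tfrac12(f_+-f_-)$ one obtains
\[
\frac{d}{dt}h_{I_{l-1}}(t)=\frac{1+u(t)}{2}\,h_{(+I_{l-1})}(t)+\frac{1-u(t)}{2}\,h_{(-I_{l-1})}(t),
\]
so that $\dot h_{I_{l-1}}$ is a convex combination of $h_{(+I_{l-1})}=\phi$ and $h_{(-I_{l-1})}$. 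Now, by hypothesis i), $h_{I_{l-1}}(t_i)=S_{l-1}=0$ for every $i$ (note $I_{l-1}$ is one of $I_1,\dots,I_{l-1}$, being the word whose concatenation with $+$ gives $I_l$, and $d_{l-1}<d_l$), hence between consecutive $t_i$ the $C^1$ function $h_{I_{l-1}}$ has a critical point $s_i$, and $s_i\to t_\infty$. At $s_i$ we get $0=\tfrac{1+u(s_i)}2\,h_{(+I_{l-1})}(s_i)+\tfrac{1-u(s_i)}2\,h_{(-I_{l-1})}(s_i)$. If infinitely many $s_i$ satisfy $u(s_i)=1$ then $h_{(+I_{l-1})}(s_i)=h_{I_l}(s_i)=0$; if infinitely many satisfy $u(s_i)=-1$ then $h_{(-I_{l-1})}(s_i)=0$; and if there are infinitely many $s_i$ lying on a singular arc, one needs to work a little more --- there $h_1\equiv0$ locally, and one differentiates the relation $h_{I_l}\equiv$ (something) once more, using Proposition~\ref{p:SS} together with the structure of the extremal to extract vanishing of $h_{(+I_l)}$ (or $h_{(-I_l)}$) at $t_\infty$. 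In any of these three cases, passing to the limit $s_i\to t_\infty$ and using continuity of $h_J$ for any fixed word $J$, we conclude that one of $h_{(-I_{l-1})}, h_{(+I_l)}, h_{(-I_l)}$ vanishes at $t_\infty$; together with $S_1(t_\infty)=\dots=S_l(t_\infty)=0$ (again by continuity), this gives $(\lambda(t_\infty),j^N(f_0),j^N(f_1))\in\mathcal{B}\cap\{S_{l+1}=0\}$ for the appropriate choice of $S_{l+1}\in\{S_{(-I_{l-1})},S_{(-I_l)},S_{(+I_l)}\}$. I expect the singular-arc case (the analogue of case (iii) in the proof of Theorem~\ref{thm:start}) to be the fiddly part here.

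\smallskip

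\noindent\textbf{Step 2: the codimension count.}
Here the point is that adding $S_{l+1}=0$ genuinely cuts the dimension down by one, i.e. that $S_{l+1}$ is functionally independent from $S_1,\dots,S_l$ on $\mathcal{B}'_q$. First observe that whichever of the three candidates $S_{l+1}$ turns out to be, it is a simple relation of degree $d_l+1>d_l>d_j$ for all $j<l$, associated to a word obtained by prepending a single letter $\pm$ to $I_{l}$ or $I_{l-1}$. The mechanism is: compute the Poisson bracket of the defining relations, in the sense of Definition~\ref{def:relations}. Concretely, the vector field $\ovr{S_{I}}$ acting on the function $S_J$ produces $\{S_I,S_J\}=S_{IJ}$; choosing $I=(+)$, i.e. the Hamiltonian lift associated with $f_+$ (or $f_-$), one sees that $\ovr{h_+}$ differentiates $h_{I_{l-1}}$ precisely into $h_{(+I_{l-1})}=S_l$, and $h_{I_l}=S_l$ into $h_{(+I_l)}$. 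Thus $S_{l+1}$ (in each of its possible forms) is obtained from the already-present relations by a Poisson bracket with $h_+$ or $h_-$, which is a first-order differential operator in the $\lambda$-variable with coefficients that are polynomial in the entries of the $(d_l+1)$-jets of $f_0,f_1$; since $N>d_l$, these jet entries are \emph{free coordinates} on $J^N_qM\times J^N_qM$ that do not appear in any of $S_1,\dots,S_l$ (those only involve jets up to order $d_l-1$). Hence the differential $dS_{l+1}$ has a component along the coordinate directions corresponding to the top-order jet coefficients, directions in which $dS_1,\dots,dS_l$ all vanish, so $dS_{l+1}$ is linearly independent from $dS_1,\dots,dS_l$ at every point of $\mathcal{B}'_q$ where $\lambda\neq 0$ and $g_0(q)\wedge g_1(q)\neq 0$ --- the latter condition guaranteeing that the relevant bracket, evaluated on $\lambda$, is not forced to be a combination of lower-order ones. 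Since by assumption $\mathcal{B}'_q$ has codimension exactly $l$, i.e. $dS_1,\dots,dS_l$ are independent on it, adding the independent constraint $dS_{l+1}$ yields $\mathrm{codim}(\mathcal{B}'_q\cap\{S_{l+1}=0\})=l+1$, as claimed.

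\smallskip

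\noindent\textbf{Main obstacle.}
The delicate point is \textbf{Step 2}: one must make precise the claim that $S_{l+1}$ is independent of $S_1,\dots,S_l$ \emph{on} $\mathcal{B}'_q$, not merely as abstract polynomials. The clean way is to exhibit, at an arbitrary point of $\mathcal{B}'_q$, a tangent vector (a deformation of the highest-order jet coefficients of $f_0$ or $f_1$, chosen so that $f_I$ changes in a prescribed direction while lower brackets are untouched) on which $dS_1,\dots,dS_l$ vanish but $dS_{l+1}$ does not; the hypotheses $\lambda\neq0$ and $g_0(q)\wedge g_1(q)\neq 0$ are what make such a deformation effective on the pairing with $\lambda$. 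Getting the bookkeeping of which jet order each letter costs — and checking that the new letter $+$ or $-$ strictly raises the order — is the only real content, and it is where the restriction $N>d_l$ and the form $I_l=(+I_{l-1})$ are used.
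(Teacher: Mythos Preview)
Your Step~1 has a technical flaw and a more serious logical gap. First, $h_{I_{l-1}}$ is only Lipschitz, not $C^1$: its a.e.\ derivative $h_{0I_{l-1}}+u\,h_{1I_{l-1}}$ involves the discontinuous control $u$, so the pointwise value $u(s_i)$ at a critical point is ill-defined. More seriously, even granting this, your case ``$u(s_i)=1$ for infinitely many $i$'' yields only $h_{(+I_{l-1})}(s_i)=h_{I_l}(s_i)=0$, which is the \emph{already known} relation $S_l=0$ --- not one of the three listed new relations --- and you never explain how the missing iteration to reach $S_{(\pm I_l)}$ is carried out. The paper sidesteps both issues with an integral-mean argument: since $h_J(t_\infty)=h_J(t_i)=0$ one divides $\int_{t_i}^{t_\infty}(h_{0J}+u\,h_{1J})\,d\tau=0$ by $t_\infty-t_i$ and extracts (along a subsequence) a well-defined limit $\bar u=\lim\frac{1}{t_\infty-t_i}\int_{t_i}^{t_\infty}u\,d\tau\in[-1,1]$, the \emph{same} $\bar u$ working simultaneously for $J=I_{l-1}$ and $J=I_l$. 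If $\bar u=\pm1$, applying this with $J=I_l$ gives $S_{(\pm I_l)}(t_\infty)=0$; if $|\bar u|<1$, applying it with $J=I_{l-1}$ and combining with $S_l=h_{(+I_{l-1})}(t_\infty)=0$ forces both $h_{(0I_{l-1})}(t_\infty)=h_{(1I_{l-1})}(t_\infty)=0$, hence $S_{(-I_{l-1})}(t_\infty)=0$. No Rolle-type critical points, no pointwise $u$, no singular-arc subcase.

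Your Step~2 contains a genuine error precisely in the branch you identify as the main obstacle. You assert that $S_{l+1}$ has degree $d_l+1$, but when $S_{l+1}=S_{(-I_{l-1})}$ its degree is $d_{l-1}+1=d_l$, \emph{equal} to that of $S_l=S_{(+I_{l-1})}$. So the ``new top-order jet coordinate'' argument fails outright here: $S_l$ and $S_{l+1}$ depend on the same jet order and your independence claim is unjustified. The paper treats this branch separately via the decomposition of Remark~\ref{r:unnumbered}: expanding each $f_{(\pm I_{l-1})}$ as a signed sum of brackets $f_{J_k}$ with $J_k\in\{0,1\}^{d_l}$, the two distinguished words (maximal $0$-count and maximal $1$-count) contribute with the same sign to $f_{(+I_{l-1})}$ but with opposite signs to $f_{(-I_{l-1})}$; in adapted coordinates with $g_0(q)=\partial_{x_1}$, $g_1(q)=\partial_{x_2}$ this produces two distinct top-order partial-derivative coordinates --- one on the jet of $g_1$, one on the jet of $g_0$ --- along which $dS_l$ and $dS_{l+1}$ are visibly independent. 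Without this extra analysis the codimension claim is unproved in one of the three cases.
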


\begin{proof}
	Let $(q(\cdot),u(\cdot),\lambda(\cdot))$ be an extremal triple defined on $[0,T]$ and $\{t_i\}_{i\in\N}\subset [0,T]$ be a sequence of points satisfying i) and ii) in the statement. Then, since for every word $J\in\{I_1,\dots, I_l\}$ we have that $h_{J}(t_i)=\langle \lambda(t_i),f_{J}(q(t_i))\rangle$ vanishes for every $i\in\N$, by continuity the same is also true for $h_{J}(t_\infty)$, which implies that the point $(\lambda(t_\infty), j^N_{q(t_\infty)}(f_0),j_{q(t_\infty)}^N(f_1))$ belongs to $\mathcal{B}
	$.
	
	Now, up to the choice of a suitable subsequence of $\{t_i\}_{i\in\N}$, we infer the identity
	\begin{align}\label{eq:integral_mean}
		0&=\lim_{i\to\infty}\frac{h_J(t_\infty)-h_J(t_i)}{t_\infty-t_i}=\lim_{i\to\infty}\frac{1}{t_\infty-t_i}\int_{t_i}^{t_\infty}(h_{0J}(\tau)+u(\tau)h_{1J}(\tau))d\tau\\
		&=h_{0J}(t_\infty)+\bar{u}h_{1J}(t_\infty),\quad \bar{u}=\lim_{i\to\infty}\frac{1}{t_\infty-t_i}\int_{t_i}^{t_\infty}u(\tau)d\tau\;\in [-1,1],
	\end{align}
	which is 
	valid for every $J\in\{I_1,\dots,I_l\}$. The first of our claims is then proved. Indeed, if $\bar{u}=\pm1$ we use \eqref{eq:integral_mean} with $J=I_l$ to deduce that
	$$
	\langle \lambda(t_\infty), f_{(\pm I_l)}(q(t_\infty)) \rangle=0,$$ so that $S_{l+1}$ is in the form $S_{(\pm I_l)}$, and we are done. If, on the other hand, $\bar{u}\in(-1,1)$ we apply \eqref{eq:integral_mean} with $J=I_{l-1}$, and we deduce that
	$$S_{(\bar{u},l-1)}(\lambda(t_\infty),j^N_{q(t_\infty)}(f_0),j_{q(t_\infty)}^N(f_1)):=\langle \lambda(t_\infty), f_{(0I_{l-1})}(q(t_\infty))+\bar{u}f_{(1I_{l-1})}(q(t_\infty))\rangle=0.$$ 
	The combination of the relations $S_{(\bar{u},l-1)}=S_l=0$ at the point $(\lambda(t_\infty),j^N_{q(t_\infty)}(f_0),j_{q(t_\infty)}^N(f_1))$ yields
	$$\langle \lambda(t_\infty),f_{(0I_{l-1})}(q(t_\infty))\rangle=0\quad \mathrm{ and }\quad\langle\lambda(t_\infty),f_{(1I_{l-1})}(q(t_\infty))\rangle=0,$$
	which in turn implies that
	$$
	\langle \lambda(t_\infty), f_{(- I_{l-1})}(q(t_\infty))\rangle=0,$$ so that
	we conclude by taking
	 $S_{l+1}=S_{(-I_{l-1})}$.
	
	To prove the second claim of the statement, it is not restrictive to work within a coordinate neighborhood $(U,x)\subset \R^n$ centered at the origin (identified with $q$), the whole argument being local. Then $g_i(x)=\sum_{j=1}^n\alpha_i^j(x)\partial_{x_j}$ on $U$, for $i=0,1$. On $J^N_0M\times J^N_0M$, 
	$J^N_0g_0$ and $J^N_0g_1$ are given in local coordinates respectively by
	\begin{align*}
		(\alpha_0^j(0),\nabla \alpha_0^j(0),\dots, \nabla^{(N)}\alpha_0^j(0),0,\dots,0)_{j=1}^n&\in \left(\R\times \R^n\times\dotsm\times \R^{n^N}\right)^{2\times n}\quad\textrm{and}\\
		(0,\dots,0,\alpha_1^j(0),\nabla \alpha_1^j(0),\dots, \nabla^{(N)}\alpha_1^j(0))_{j=1}^n&\in \left(\R\times \R^n\times\dotsm\times \R^{n^N}\right)^{2\times n}.
	\end{align*}
	Moreover, since $g_0(q)\wedge g_1(q)\neq 0$, without loss of generality we can  assume that
	\be
		\alpha_0^1(0)=\alpha_1^2(0)=1,\quad \alpha_0^j(0)=0\textrm{ if } j\neq 1,\quad \alpha_1^j(0)=0\textrm{ if } j\neq 2.
	\ee
	
	Let  the codimension of $\mathcal{B}'_0$ in $T_0^*M\times J_0^NM\times J_0^NM$  be equal to $l$, and assume that $S_{l+1}$ is of the form $S_{(\pm I_l)}$. In particular, the degree of $S_{l+1}$ is maximal among $\{\deg(S_1),\dots,\deg(S_{l+1})\}$. Following Remark~\ref{r:unnumbered}, let us write the decomposition $$g_{I_{l+1}}=g_{I_{l+1,1}}+\dots+g_{I_{l+1,k}},$$ where we recall that $I_{l+1,1}$ is uniquely identified by the requirement that it contains the maximal number, say $s$, of occurrences of the letter $0$. Writing the analogous decomposition for simple relations $$S_{I_{l+1}}=S_{I_{l+1,1}}+\dots+S_{I_{l+1,k}},$$ we see that the coordinate expression of $S_{I_{l+1,1}}$ at $(\lambda,j^N_0(g_0),j^N_0(g_1))\in T^*_0M\setminus\{0\}\times J^N_0M\times J^N_0M$ takes the form 
	$$0=\langle\lambda,g_{I_{l+1,1}}(0)\rangle=\sum_{j=1}^n\lambda_j\partial_{x_1}^s\partial_{x_2}^{\deg(S_{l+1})-s-1}\alpha_1^j(0)+P_{I_{l+1,1}}(\lambda,j^N_0(g_0),j^N_0(g_1)),$$
	where $P_{I_{l+1,1}}$ is a polynomial expression in the coordinates of $\lambda$, $j_0^N(g_0)$ and $j_0^N(g_1)$ that does not contain any term of the form $\partial_{x_1}^s\partial_{x_2}^{\deg(S_{l+1})-s-1}\alpha_1^j(0)$, for $1\leq j\leq n$. By construction, these terms do not appear in any of the other summands $\langle\lambda,g_{I_{l+1,i}}\rangle$, for $i\neq 1$, neither among all other simple relations $S_{1},\dots, S_{l}$. Therefore, as $\lambda\neq 0$, we infer the existence of a further independent relation, and we conclude that $$\mathrm{codim}_{T_0^*M\times J_0^NM\times J_0^NM}(\mathcal{B}'_0\cap\{S_{l+1}=0\})=l+1.$$
	
	The case in which $S_{l+1}=S_{(-I_{l-1})}$ can be tackled similarly. In this situation $\deg(S_l)=\deg(S_{l+1})>\deg(S_i)$ for every $i<l$. 
	We may again exploit Remark~\ref{r:unnumbered}, and isolate the terms $g_{I_{l,1}},g_{I_{l,2}}$ and $g_{I_{{l+1},1}},g_{I_{{l+1},2}}$ in the decompositions of $g_{I_l}$ and $g_{I_{l+1}}$ respectively. 
	Observe that, by definition of $I_l$ and $I_{l+1}$, one has  $g_{I_{{l+1},1}}=g_{I_{{l},1}}$ and $g_{I_{{l+1},2}}=-g_{I_{{l},2}}$. Moreover, 
	$0$ appears $s$ times in $I_{l,1}$,  while $1$ appears $t$ times in $I_{l,2}$, and both $s$ and $t$ are maximal among their corresponding decompositions, so that we can write 
	\begin{align*}
		0=&\langle\lambda,g_{I_{l}}(0)\rangle=
\langle\lambda,g_{I_{l,1}}(0)\rangle+\langle\lambda,g_{I_{l,2}}(0)\rangle		+P_{I_{l}}(\lambda,j^N_0(g_0),j^N_0(g_1))
		\\=&\sum_{j=1}^n\lambda_j\Big(\partial_{x_1}^s\partial_{x_2}^{\deg(S_{l})-s-1}\alpha_1^j(0)+\partial_{x_1}^{\deg(S_{l})-t-1}\partial_{x_2}^t\alpha_0^j(0)\Big)+Q_{I_{l}}(\lambda,j^N_0(g_0),j^N_0(g_1)),\\
		0=&\langle\lambda,g_{I_{l+1}}(0)\rangle
		=
\langle\lambda,g_{I_{l,1}}(0)\rangle-\langle\lambda,g_{I_{l,2}}(0)\rangle+P_{I_{l+1}}(\lambda,j^N_0(g_0),j^N_0(g_1))\\
		=&\sum_{j=1}^n\lambda_j\Big(\partial_{x_1}^s\partial_{x_2}^{\deg(S_{l})-s-1}\alpha_1^j(0)-\partial_{x_1}^{\deg(S_{l})-t-1}\partial_{x_2}^t\alpha_0^j(0)\Big)+Q_{I_{l+1}}(\lambda,j^N_0(g_0),j^N_0(g_1)),
	\end{align*}
	where  $P_{I_{l}},P_{I_{l+1}},Q_{I_{l}},Q_{I_{l+1}}$ are polynomial expressions in the coordinates of $\lambda$, $j_0^N(g_0)$ and $j_0^N(g_1)$ that do not contain any term of the form $\partial_{x_1}^s\partial_{x_2}^{\deg(S_{l})-s-1}\alpha_1^j(0)$ and $\partial_{x_1}^{\deg(S_{l})-t-1}\partial_{x_2}^t\alpha_0^j(0)$, for $1\leq j\leq n$.
	In addition, these two terms are neither found among all other simple relations $S_{1},\dots, S_{l-1}$. Thus, 
		as $\lambda\neq 0$, 
		the relations $\langle \lambda, g_{I_l}(0)\rangle=0$ and $\langle \lambda, g_{I_{l+1}}(0)\rangle=0$ are mutually independent (since their gradients are not parallel)
and also independent from $\langle \lambda, g_{I_k}(0)\rangle=0$, $k=1,\dots, l-1$.
\end{proof}

\begin{lemma}\label{lemma:codim}
	Let $l,d_1,\dots, d_l\in\N\setminus\{0\}$ and consider $l$ words $I_1\in\mathfrak{A}^{d_1},\dots,I_l\in\mathfrak{A}^{d_l}$ with $d_j<d_{l-1}$ for every $j<l-1$ and $d_{l-1}=d_l$. Suppose that there exists $j<l-1$ such that $I_{l-1}=(0\,I_{j})$ and $I_l=(1\,I_j)$. 
	Using the notations introduced in \eqref{eq:simple_relation} and \eqref{eq:polynomial_relation}, consider the 
	family of polynomial relations $Q_r$, $r\in\N\setminus\{0\}$, constructed inductively using the simple relations $S_{I_1},\dots,S_{I_l}$ as follows
	\begin{align}\label{eq:Q}
Q_1=\det\left(\begin{array}{cc}
			\{S_0,S_{I_l}\} & \{S_1,S_{I_l}\}\\
			\{S_0,S_{I_{l-1}}\} & \{S_1,S_{I_{l-1}}\}
		\end{array}
		\right), 
		\quad 
	Q_r=\det\left(\begin{array}{cc}
		\{S_0,S_{I_l}\} & \{S_1,S_{I_l}\}\\
		\{S_0,Q_{r-1}\} & \{S_1,Q_{r-1}\}
		\end{array}
		\right)
		\ \textrm{ for }\: r\ge 2.
	\end{align}
	Fix $h\in\N$, an integer $N> d_l+h$, and define the set $\mathcal{B}\subset T^*M\times J^NM\times J^NM$ by 
	$$\mathcal{B}=\left\{(\lambda, j^N_q(f_0),j_q^N(f_1))\,\left|\,q=\pi(\lambda),\: \begin{aligned} &S_{I_1}(\lambda, j^N_q(f_0),j_q^N(f_1))=\dots=S_{I_l}(\lambda, j^N_q(f_0),j_q^N(f_1))=0\\
	&Q_1(\lambda, j^N_q(f_0),j_q^N(f_1))=\dots=Q_h(\lambda, j^N_q(f_0),j_q^N(f_1))=0
	\end{aligned}\right.\right\}.$$
	If 
	$(q(\cdot), u(\cdot),\lambda(\cdot))$ is an extremal triple on $[0,T]$,
	and if the sequence $\{t_i\}_{i\in\N}\subset[0,T]$ is  such that
	\begin{itemize}
		\item [i)] $(\lambda(t_i), j^N_{q(t_i)}(f_0),j_{q(t_i)}^N(f_1))\in\mathcal{B}$ for every $i\in\N$,
		\item [ii)] there exists $t_\infty=\lim_{i\to\infty}t_i$,
	\end{itemize}
	then, setting
	$$I_{l+1}=(0I_l),\quad I_{l+2}=(1I_l),
	$$
	either 
	$$(\lambda(t_\infty), j^N_{q(t_\infty)}(f_0),j_{q(t_\infty)}^N(f_1))\in\mathcal{B}\cap\{S_{I_{l+2}}\neq 0\}\cap\{Q_{h+1}=0\},$$
	or
	$$(\lambda(t_\infty), j^N_{q(t_\infty)}(f_0),j_{q(t_\infty)}^N(f_1))\in\mathcal{B}\cap\{S_{I_{l+1}}=0\}\cap\{S_{I_{l+2}}=0\}.$$
	Finally, defining for every $q\in M$ the set $\mathcal{B}'_q\subset T^*_qM\times J_q^NM\times J_q^NM$ by  
	\begin{align*}
\mathcal{B}'_q
	=\bigg\{(\lambda, j^N_q(g_0),j_q^N(g_1))\mid \lambda&\in T^*_q M\setminus\{0\},\;(g_0,g_1)\in {\rm Vec}(M)^2,\;g_0(q)\wedge g_1(q)\neq 0,\\
&	S_{I_1}(\lambda, j^N_q(g_0),j_q^N(g_1))=\dots=S_{I_l}(\lambda, j^N_q(g_0),j_q^N(g_1))=0\bigg\},
\end{align*}
	if the codimension of $\mathcal{B}'_q$ in $T_q^*M\times J_q^NM\times J_q^NM$ is equal to $l$, then
\begin{align*}
\mathrm{codim}_{T_q^*M\times J_q^NM\times J_q^NM}(\mathcal{B}'_q\cap\{S_{I_{l+2}}\neq 0\}\cap\{Q_{1}=0\}\cap\dots\cap\{Q_{h+1}=0\})&=l+h+1,\\
\mathrm{codim}_{T_q^*M\times J_q^NM\times J_q^NM}(\mathcal{B}'_q\cap\{S_{I_{l+1}}=0\}\cap\{S_{I_{l+2}}=0\})&=l+2.
\end{align*}
\end{lemma}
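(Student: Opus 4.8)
The plan is to mimic the argument of Lemma~\ref{lemma:Jets}, replacing the single limiting step by a dichotomy governed by the value of the limiting averaged control $\bar u$. First I would extract, along a subsequence of $\{t_i\}$, the limit $\bar u\in[-1,1]$ of the Cesàro averages $(t_\infty-t_i)^{-1}\int_{t_i}^{t_\infty}u(\tau)\,d\tau$, exactly as in \eqref{eq:integral_mean}, and record that for every word $J$ among $I_1,\dots,I_l$ (and also $Q_1,\dots,Q_h$, viewed as functions along the extremal) the difference quotient identity gives $h_{0J}(t_\infty)+\bar u\,h_{1J}(t_\infty)=0$, while by continuity all the relations defining $\mathcal B$ already hold at $t_\infty$. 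Applying this with $J=I_l$ yields $\langle\lambda(t_\infty),(f_{0I_l}+\bar u f_{1I_l})(q(t_\infty))\rangle=0$, i.e. $S_{I_{l+1}}+\bar u\,S_{I_{l+2}}=0$ at $t_\infty$. If $S_{I_{l+2}}(t_\infty)=0$, then also $S_{I_{l+1}}(t_\infty)=0$ and we land in the second alternative. If $S_{I_{l+2}}(t_\infty)\neq 0$, then $\bar u=-S_{I_{l+1}}/S_{I_{l+2}}$ is pinned down, and I must show that this forces $Q_{h+1}(t_\infty)=0$.

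For the latter, the key observation is that the functions $t\mapsto h_{I_{l-1}}(t)=h_{0I_j}(t)$ and $t\mapsto h_{I_l}(t)=h_{1I_j}(t)$ are $C^1$ with derivatives $\dot h_{I_{l-1}}=h_{0I_{l-1}}+u\,h_{1I_{l-1}}$ and $\dot h_{I_l}=h_{0I_l}+u\,h_{1I_l}$; using the relations $S_{I_{l-1}}=S_{I_l}=0$ that hold identically at the $t_i$ and at $t_\infty$, the Poisson-bracket formalism of Definition~\ref{def:relations} identifies $\{S_a,S_{I_{l-1}}\}$ and $\{S_a,S_{I_l}\}$ ($a=0,1$) as precisely the coefficients appearing in these derivatives. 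The determinant $Q_1$ is then, up to sign, the condition that the two linear equations $\{S_0,S_{I_l}\}+\bar u\{S_1,S_{I_l}\}=0$ and $\{S_0,S_{I_{l-1}}\}+\bar u\{S_1,S_{I_{l-1}}\}=0$ be compatible in the unknown $\bar u$; and the first of these is exactly $h_{0I_l}(t_\infty)+\bar u\,h_{1I_l}(t_\infty)=0$, which we just derived. Iterating — differentiating $Q_{r-1}$ along the extremal and using that $Q_{r-1}(t_i)=0$ for all $i$, hence $Q_{r-1}(t_\infty)=0$ and $\dot Q_{r-1}(t_\infty)$ is governed by $\{S_0,Q_{r-1}\}+\bar u\{S_1,Q_{r-1}\}$ via the same difference-quotient trick applied to $Q_{r-1}$ — I obtain inductively $\{S_0,Q_r\}+\bar u\{S_1,Q_r\}=0$ at $t_\infty$ for $r\le h$, and the compatibility of this with $h_{0I_l}+\bar u h_{1I_l}=0$ is exactly $Q_{h+1}(t_\infty)=0$. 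This establishes the trajectory alternative.

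For the codimension statements I would argue locally and pointwise, as in the second half of the proof of Lemma~\ref{lemma:Jets}: fix coordinates centered at $q$ with $\alpha_0^1(0)=\alpha_1^2(0)=1$ and the other zeroth-order coefficients vanishing, so that $\lambda\neq 0$. In the second alternative, the two new simple relations $S_{I_{l+1}}=S_{(0I_l)}$ and $S_{I_{l+2}}=S_{(1I_l)}$ have strictly larger degree than $S_{I_1},\dots,S_{I_l}$; using Remark~\ref{r:unnumbered} to isolate the leading summands $g_{I_{l+1,1}},g_{I_{l+2,2}}$, these involve the coordinates $\partial_{x_1}^{s}\partial_{x_2}^{d_l-s}\alpha_0^j(0)$ and $\partial_{x_1}^{t}\partial_{x_2}^{d_l-t}\alpha_1^j(0)$ with $s,t$ maximal, which appear in no lower-degree relation, giving two fresh independent gradients and hence codimension $l+2$. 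In the first alternative I must additionally control the polynomial relations $Q_1,\dots,Q_{h+1}$: here I would note that $Q_r$ is built from Poisson brackets $\{S_0,\cdot\},\{S_1,\cdot\}$ applied $r$ times starting from degree-$d_l$ relations, so $\deg Q_r=d_l+r$, and the top-order jet coefficient entering $Q_r$ (coming from the highest iterated derivative of the $\alpha$'s) is genuinely new at each level $r$ and does not occur in $S_{I_1},\dots,S_{I_l},Q_1,\dots,Q_{r-1}$, nor is it killed by the constraint $S_{I_{l+2}}\neq 0$ (which only removes a closed subset, not affecting codimension); restricting to the open set $\{S_{I_{l+2}}\neq 0\}$ each $Q_r=0$ therefore drops the dimension by exactly one, yielding codimension $l+h+1$.

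\textbf{Main obstacle.} The delicate point is the bookkeeping in the determinant induction: one must verify that each $Q_r$, when expressed in jet coordinates, genuinely contains a monomial in the top-order derivatives of $\alpha_0,\alpha_1$ that survives the cancellations built into the $2\times 2$ determinant structure and is absent from all previously imposed relations. This requires a careful use of Remark~\ref{r:unnumbered} at each degree, together with the hypothesis $d_{l-1}=d_l$ and $I_{l-1}=(0I_j)$, $I_l=(1I_j)$, which is what makes the two entries in each row of the determinant have matching degree so that $Q_r$ is homogeneous of degree $d_l+r$; without this the leading terms would not align and the new coefficient might fail to be independent. The genericity assumption $g_0(q)\wedge g_1(q)\neq0$ is used, as in Lemma~\ref{lemma:Jets}, only to normalize the zeroth-order jets and guarantee $\lambda\neq 0$, so that the linear functionals on $T^*_qM$ obtained from the leading $\lambda_j$-coefficients are nonzero.
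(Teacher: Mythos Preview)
Your strategy is correct and coincides with the paper's: the difference-quotient/averaged-control argument applied simultaneously to $S_{I_l}$ and to $Q_h$ (with the same $\bar u$, extracted along a common subsequence) yields the two relations $\{S_0,S_{I_l}\}+\bar u\{S_1,S_{I_l}\}=0$ and $\{S_0,Q_h\}+\bar u\{S_1,Q_h\}=0$ at $t_\infty$, whence $(1,\bar u)$ lies in the kernel of the $2\times 2$ matrix and $Q_{h+1}=0$; your inductive phrasing is slightly more elaborate than needed, but not wrong. The codimension claim for $\mathcal B'_q\cap\{S_{I_{l+1}}=0\}\cap\{S_{I_{l+2}}=0\}$ is handled exactly as you say, by reduction to Lemma~\ref{lemma:Jets}.

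What you flag as the \textbf{Main obstacle} is indeed the one point where a concrete computation is required, and the paper resolves it via an explicit inductive identity that you should record: one proves by induction on $r$ that
\[
Q_r=(-1)^r\bigl(S_{I_{l+2}}\bigr)^r\,\mathrm{ad}^r_{S_0}(S_{I_{l-1}})+Q_r',
\]
where $Q_r'$ is a polynomial in the same simple relations as $Q_r$ \emph{except} $\mathrm{ad}^r_{S_0}(S_{I_{l-1}})=S_{(0\cdots 0\,I_{l-1})}$. This isolates, on the open set $\{S_{I_{l+2}}\neq 0\}$, a single simple relation of strictly increasing degree $d_{l-1}+r$ as the ``leading'' factor of $Q_r$; applying Remark~\ref{r:unnumbered} to $f_{(0\cdots 0\,I_{l-1})}$ then exhibits the fresh jet coordinate $\partial_{x_1}^{r+s}\partial_{x_2}^{\deg(S_{I_{l-1}})-s-1}\alpha_1^j(0)$ (with $s$ the maximal number of $0$'s in the decomposition of $I_{l-1}$), which appears in none of $S_{I_1},\dots,S_{I_l},Q_1,\dots,Q_{r-1}$. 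This is precisely the bookkeeping you anticipated; once written down, the independence and hence the codimension count $l+h+1$ follow immediately.
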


\begin{proof}
	 The proof of the first part of the statement follows along the same lines of Lemma \ref{lemma:Jets}, using equation \eqref{eq:integral_mean} 
	 both on $S_{I_l}$ and on $Q_h$, with the convention that $Q_0=S_{I_{l-1}}$. 
We prove in this way 
that the relations 
	\be\label{eq:proQ}
\{S_0,S_{I_{l}}\}+\bar{u}\{S_1,S_{I_{l}}\}=0	\quad \mbox{and} \quad \{S_0,Q_h\}+\bar{u}\{S_1,Q_h\}=0
	\ee
	 hold at $(\lambda(t_\infty), j^N_{q(t_\infty)}(f_0),j_{q(t_\infty)}^N(f_1))$, where the value $\bar{u}$ is the same in both identities, since it is computed as the limit of a common sequence. 
	If $S_{I_{l+2}}=\{S_1,S_{I_{l}}\}$ vanishes on the triple $(\lambda(t_\infty), j^N_{q(t_\infty)}(f_0),j_{q(t_\infty)}^N(f_1))$, then so does $S_{I_{l+1}}=\{S_0,S_{I_{l}}\}$. 
	From equation~\eqref{eq:proQ} we also deduce  that $(1,\bar{u})$ is in the kernel of $$\left(\begin{array}{cc}
	\{S_0,S_{I_l}\} & \{S_1,S_{I_l}\}\\
	\{S_0,Q_h\} & \{S_1,Q_h\}
	\end{array}
	\right),$$ and therefore that its determinant $Q_{h+1}$ vanishes at $(\lambda(t_\infty),j^N_{q(t_\infty)}(f_0),j^N_{q(t_\infty)}(f_1))$. 
	
	In order to prove the second part of the statement, as in Lemma~\ref{lemma:codim} the idea is to express all relations in local coordinates around $q$ on the product space $T^*_qM\times J_q^NM\times J_q^NM$, with the non-restrictive hypothesis that $g_0(0)=\partial_{x_1}$ and $g_1(0)=\partial_{x_2}$. Notice that 
	for what concerns the codimension of 
	$\mathcal{B}'_q\cap\{S_{I_{l+1}}=0\}\cap\{S_{I_{l+2}}=0\}$
	we can reason 
	exactly as in Lemma~\ref{lemma:codim}, since we deal in fact only with simple relations. 
	Thus we are left with the task of proving that, if $S_{I_{l+2}}\neq 0$, each polynomial relation $Q_{r}$ provides a condition independent  from $S_{I_1},\dots,S_{I_l}$ and $Q_1,\dots,Q_{r-1}$.

By construction, $Q_r$ is a polynomial relation in the variables $S_{(AB)}$, where $(AB)$ is the concatenation of a word $A$ of length at most $r$  with letters in $\{0,1\}$ and a word $B$ equal either to $I_{l-1}$ or $I_{l}$. 
	It is not hard to show, by induction, that $$Q_r=(-1)^r(S_{I_{l+2}})^r
	\mathrm{ad}^r_{S_0}(S_{I_{l-1}})+Q_r'=(-1)^rS_{(1I_{l})}^r
	S_{(0\cdots0I_{l-1})}+Q_r',$$ where $\mathrm{ad}^r_{S_0}$  denotes the iterated Poisson bracket with $S_0$
	 and $Q_r'$ is a polynomial relation in the same variables as $Q_r$ except for $\mathrm{ad}^r_{S_0}(S_{I_{l-1}})$.
	 Following Remark \ref{r:unnumbered}, we further decompose $f_{I_{l-1}}$ as $f_{I_{l-1},1}+\dots+f_{I_{l-1},k}$, where the letter $0$ appears in $I_{l-1,1}$ the maximal number of times, say $s$, among the collection $\{I_{l-1,i}\}_{i=1}^k$. In coordinates  we then write $$\mathrm{ad}^r_{S_0}(S_{I_{l-1}})(\lambda(t_\infty))=\sum_{j=1}^n\lambda_j\partial_{x_1}^{r+s}\partial_{x_2}^{\deg(S_{I_{l-1}})-s-1}\alpha_1^j(0)+P_{I_{l-1}}(\lambda,j_0^N(g_0),j_0^N(g_1)),$$ where $P_{I_{l-1}}$ is a polynomial expression in $\lambda$, $j_0^N(g_0)$ and $j_0^N(g_1)$ that does not contain any term of the form $\partial_{x_1}^{r+s}\partial_{x_2}^{\deg(S_{I_{l-1}})-s-1}\alpha_1^j(0)$. Since $\lambda\neq 0$ and the above is true for any $r\in\N$, we conclude that, as soon as $S_{I_{l+2}}\neq 0$, each $Q_r$ gives  a new independent condition, and 
	the claim on the codimension follows.
\end{proof}

\subsection{Collinear case}

The computation of the codimension of the  sets $\mc{B}'_q$ identified in Lemmas~\ref{lemma:Jets} and \ref{lemma:codim} relies on the linear independence at $q$ of $f_0(q)$ and  $f_1(q)$. 
We study in this section what happens when the condition $f_0(q)\wedge f_1(q)\ne 0$ fails to hold. 

We associate with the pair $(f_0,f_1)\in\mathrm{Vec}(M)^2$ the collinearity set 
\begin{equation}\label{defi:C}
\mathcal{C}=\{q\in M\mid f_0(q)\wedge f_1(q)=0\}.
\end{equation}

\begin{lemma}\label{lemma:dependent}
	Let $u\in L^\infty([0,T],[-1,1])$ and 
	$q:[0,T]\to M$ be a trajectory of the control system \eqref{eq:si} associated with 
	the control $u$. 
	Assume that $t_\infty\in[0,T]$ is such that $q(t_\infty)\in\{q\in M\mid 
	f_1(q)\wedge [f_0,f_1](q)\neq 0 \}$ and that 
there exists a sequence $\{t_i\}_{i\in\N}\subset [0,T]$ converging to $t_\infty$ 
	 such that
  $q(t_i)\in\mathcal{C}$ for every $i\in\N$.
  Then 
there exists 
$$\bar{u}:=\lim_{i\to\infty}\frac{1}{t_\infty-t_{i}}\int_{t_{i}}^{t_\infty}u(\tau)d\tau\in[-1,1]
$$
and   
  $f_0(q(t_\infty))+\bar u f_1(q(t_\infty))=0$. 
\end{lemma}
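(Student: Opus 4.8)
The plan is to pass to local coordinates adapted to $f_1$ near $q(t_\infty)$, turn the collinearity constraint into the vanishing of finitely many smooth functions, and then read off $\bar u$ by differentiating these constraints along the trajectory.

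First I would record two elementary facts. The collinearity set $\mathcal{C}$ is closed (it is the zero locus of $q\mapsto f_0(q)\wedge f_1(q)$) and $q$ is continuous, so $q(t_i)\in\mathcal{C}$ and $t_i\to t_\infty$ force $q(t_\infty)\in\mathcal{C}$, that is $f_0(q(t_\infty))\wedge f_1(q(t_\infty))=0$; moreover $f_1(q(t_\infty))\neq 0$ since by hypothesis $f_1(q(t_\infty))\wedge[f_0,f_1](q(t_\infty))\neq 0$. By the flow-box (straightening) theorem I may then choose coordinates $x=(x_1,\dots,x_n)$ on a neighborhood $U$ of $q(t_\infty)$, with $q(t_\infty)$ at the origin, in which $f_1\equiv\partial_{x_1}$; writing $f_0=\sum_{j=1}^n a_j(x)\,\partial_{x_j}$, collinearity with $\partial_{x_1}$ reads $U\cap\mathcal{C}=\{x\in U: a_2(x)=\dots=a_n(x)=0\}$. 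A one-line bracket computation gives $[f_0,f_1]=-\sum_{k=1}^n(\partial_{x_1}a_k)\,\partial_{x_k}$, so the standing assumption $f_1(q(t_\infty))\wedge[f_0,f_1](q(t_\infty))\neq 0$ is exactly the statement that $\partial_{x_1}a_k(0)\neq 0$ for some index $k\in\{2,\dots,n\}$.

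Fixing such a $k$, I would use uniform continuity of $q$ on $[0,T]$ to ensure that, for $i$ large, the shrinking interval with endpoints $t_i$ and $t_\infty$ is mapped into $U$. Since $q(t_i),q(t_\infty)\in\mathcal{C}$ one has $a_k(q(t_i))=a_k(q(t_\infty))=0$, so integrating $\frac{d}{dt}\big[a_k(q(t))\big]=(f_0a_k)(q(t))+u(t)(\partial_{x_1}a_k)(q(t))$ over $[t_i,t_\infty]$ and dividing by $t_\infty-t_i$ gives
\[
0=\frac{1}{t_\infty-t_i}\int_{t_i}^{t_\infty}(f_0a_k)(q(t))\,dt+(\partial_{x_1}a_k)(0)\,U_i+\frac{1}{t_\infty-t_i}\int_{t_i}^{t_\infty}u(t)\big[(\partial_{x_1}a_k)(q(t))-(\partial_{x_1}a_k)(0)\big]\,dt,
\]
where $U_i:=\frac{1}{t_\infty-t_i}\int_{t_i}^{t_\infty}u(t)\,dt\in[-1,1]$. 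Letting $i\to\infty$: the first term tends to $(f_0a_k)(0)$ by continuity, and the third tends to $0$ because $|u|\le 1$ and $(\partial_{x_1}a_k)(q(t))-(\partial_{x_1}a_k)(0)\to 0$ uniformly on the shrinking interval. Since $(\partial_{x_1}a_k)(0)\neq 0$, this forces $U_i$ to converge, and its limit is $\bar u:=-(f_0a_k)(0)/(\partial_{x_1}a_k)(0)$, which lies in $[-1,1]$ as a limit of elements of $[-1,1]$; this proves the first assertion. Finally, since $a_j(0)=0$ for $j\ge 2$, one has $(f_0a_k)(0)=\sum_j a_j(0)\,\partial_{x_j}a_k(0)=a_1(0)\,\partial_{x_1}a_k(0)$, hence $\bar u=-a_1(0)$, and therefore $f_0(q(t_\infty))+\bar u\,f_1(q(t_\infty))=(a_1(0)+\bar u)\,\partial_{x_1}=0$, as claimed.

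I expect the only genuine difficulty to be the first step: recognizing that straightening $f_1$ reduces $\mathcal{C}$ locally to $\{a_2=\dots=a_n=0\}$ and that the transversality hypothesis $f_1\wedge[f_0,f_1]\neq 0$ at $q(t_\infty)$ is precisely the nondegeneracy $\partial_{x_1}a_k(0)\neq 0$ needed to solve for the limiting average control when passing to the limit in the integral identity; everything else is a routine limiting argument. A minor technical point is that the $t_i$ need not approach $t_\infty$ monotonically or from one side, but the averaged quantities and the integration-by-parts identity are insensitive to the orientation of the interval $[t_i,t_\infty]$, so this requires no special treatment.
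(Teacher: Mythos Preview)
Your proof is correct and follows essentially the same approach as the paper: both straighten $f_1$ to $\partial_{x_1}$, describe $\mathcal{C}$ locally by $a_2=\dots=a_n=0$, and use the hypothesis $f_1\wedge[f_0,f_1]\neq 0$ to single out an index $k\ge 2$ with $\partial_{x_1}a_k(0)\neq 0$. The only cosmetic difference is that the paper phrases the limiting step geometrically (limits of secant vectors $\frac{q(t_\infty)-q(t_i)}{t_\infty-t_i}$ are tangent to the hypersurface $\{a_k=0\}$, and transversality to $f_1$ pins down $\bar u$ via a subsequence argument), whereas you write out the equivalent integral identity for $a_k(q(t))$ and solve for $\bar u$ directly; your version has the minor advantage of yielding convergence of the full sequence $U_i$ without passing through subsequences.
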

\begin{proof}
First notice that, by continuity, 
 $f_0(q(t_\infty))\wedge f_1(q(t_\infty))= 0$. 
Moreover,  since  $f_1(q(t_\infty))\wedge [f_0,f_1](q(t_\infty))\neq 0$, the set
$\mathcal{C}$ 
is, locally around $q(t_\infty)$, contained in an embedded $(n-1)$-dimensional manifold $\hat{\mathcal{C}}$ transversal to the 
vector field $f_1$. This can be seen, for instance, by choosing a local system of coordinates $(x_1,\dots,x_n)$ such that $f_1=\partial_{x_1}$ near $q(t_\infty)$. Write $f_0(x)=\sum_{i=1}^n a_i(x) \partial_{x_i}$. Then $\mathcal{C}$ is locally described by the conditions  $a_2(x)=\dots=a_n(x)=0$. Furthermore, up to restricting the coordinate chart, the condition 
$f_1(q(t_\infty))\wedge [f_0,f_1](q(t_\infty))\neq 0$ implies that there exists $j\in\{2,\dots,n\}$ such that $\partial_{x_1}a_j$ is nowhere vanishing. In particular, $\mathcal{C}$ is locally contained in the manifold $\hat{\mathcal{C}}=\{x\mid a_j(x)=0\}$, which is transversal to $f_1$. 

Let us take any coordinate system around $q(t_\infty)$. 
Notice that any converging subsequence of $\frac{q(t_\infty)-q(t_i)}{t_\infty-t_i}$ is tangent to 
$\hat{\mathcal{C}}$. 
Writing 
$$	\frac{q(t_\infty)-q(t_i)}{t_\infty-t_i}=\frac 1{t_\infty-t_i}\int_{t_i}^{t_\infty} (f_0(q(\tau))+u(\tau)f_1(q(\tau)))d\tau,$$
we deduce that for every converging subsequence of $\{\frac 1{t_\infty-t_i}\int_{t_i}^{t_\infty}u(\tau)d\tau\}_{i\in\N}$, its limit $\tilde u$ is such that $f_0(q(t_\infty))+\tilde u f_1(q(t_\infty))$ is tangent to $\hat{\mathcal{C}}$.
	The proof is concluded by noticing that, by transversality of $\hat{\mathcal{C}}$ and $f_1$,
 the only vector of the form $f_0(q(t_\infty))+\tilde u f_1(q(t_\infty))\in {\rm span}(f_1(q(t_\infty)))$ which 
is tangent to $\hat{\mathcal{C}}$ is zero. 
\end{proof}

\begin{remark}
The lemma says in particular that for every $(f_0,f_1)\in\mathrm{Vec}(M)^2$ and every trajectory $q:[0,T]\to M$ of \eqref{eq:si}, almost everywhere on $\{t\in [0,T]\mid q(t)\in\mathcal{C},\;f_1(q)\wedge [f_0,f_1](q)\ne 0\}$ we have $\dot q=0$.
This result is in the same spirit as \cite[Theorem~2.1]{CJT-SIAM}, where the  multi-input
case in considered.  
\end{remark}

\begin{defi}\label{d:Omega}
	For any extremal triple $(q(\cdot), u(\cdot),\lambda(\cdot))$ on $[0,T]$ of the time-optimal control problem \eqref{eq:si}, 
	we call
	$\Omega=\{t\in [0,T]\mid q(t)\in\mathcal{C},\;h_1(t)=0\}$.
	Moreover, we denote by $\Omega_0$ the set of all isolated points in $\Omega$, and inductively we declare $\Omega_k$ to be the set of isolated points in $\Omega\setminus(\bigcup_{j=0}^{k-1}\Omega_j)$.
\end{defi}

\begin{theorem}\label{thm:collinear}
	Let $(f_0,f_1)\in\mathrm{Vec}(M)^2$ and let $(q(\cdot), u(\cdot),\lambda(\cdot))$ be any extremal trajectory on $[0,T]$ of the time-optimal control problem \eqref{eq:si}. Assume that there exist a sequence $\{t_i\}_{i\in\N}\subset [0,T]$ and an integer $k\geq 0$ such that
	\begin{itemize}
		\item [a)] $t_i\in\Omega\setminus(\bigcup_{j=0}^k\Omega_j)$ for every $i\in\N$,
		\item [b)] there exists $t_\infty=\lim_{i\to\infty}t_i$ and $q(t_\infty)\in\{q\in M\mid f_1(q)\wedge [f_0,f_1](q)\neq 0\}$.
	\end{itemize}
	Then there exists $a\in [-1,1]$ such that, with the notation $s_a(\lambda):=\langle \lambda,(f_0+af_1)(\pi(\lambda))\rangle$, we have
	\be\label{eq:relations}
\mathrm{ad}^j_{s_a}(s_1)(\lambda(t_\infty))
=0,\quad \textrm{for every }\;0\leq j\leq k+2.
	\ee
\end{theorem}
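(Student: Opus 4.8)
The plan is to induct on $k$, using Lemma~\ref{lemma:dependent} to translate collinearity of $f_0$ and $f_1$ along the accumulating times into an annihilation condition for a suitable covector, and then to bootstrap exactly as in the proof of Lemma~\ref{lemma:Jets}: differentiate the vanishing relations along the trajectory, pass to the limit $t_i\to t_\infty$ via an integral mean, and record the new condition produced at each accumulation level. First I would treat the base case $k=0$ (or $k=-1$, reading \eqref{eq:relations} up to order $j\le 1$): since $t_i\in\Omega$, we have $q(t_i)\in\mathcal{C}$ and $h_1(t_i)=0$ for all $i$, so by Lemma~\ref{lemma:dependent} (whose hypothesis $f_1(q(t_\infty))\wedge[f_0,f_1](q(t_\infty))\neq 0$ is exactly assumption b)) there exists $a=\bar u\in[-1,1]$ with $f_0(q(t_\infty))+a f_1(q(t_\infty))=0$, i.e. $s_a(\lambda(t_\infty))=0$; and $h_1(t_\infty)=\lim h_1(t_i)=0$ gives $s_1(\lambda(t_\infty))=\langle\lambda(t_\infty),f_1(q(t_\infty))\rangle=0$. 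These are \eqref{eq:relations} for $j=0$ (noting $\mathrm{ad}^0_{s_a}(s_1)=s_1$) and, after one differentiation handled below, for $j=1$.

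For the inductive step, suppose the sequence $\{t_i\}$ lies in $\Omega\setminus(\cup_{j=0}^k\Omega_j)$ and the conclusion is known for $k-1$. Since $t_i\notin\Omega_k$, each $t_i$ is itself an accumulation point of $\Omega\setminus(\cup_{j=0}^{k-1}\Omega_j)$, so we may pick, for each $i$, a sequence accumulating at $t_i$ inside $\Omega\setminus(\cup_{j=0}^{k-1}\Omega_j)$; a diagonal extraction then produces a new sequence converging to $t_\infty$ lying in $\Omega\setminus(\cup_{j=0}^{k-1}\Omega_j)$, and (after shrinking so that b) still holds, which is fine since it is an open condition at $t_\infty$) the inductive hypothesis applies to give $a\in[-1,1]$ with $\mathrm{ad}^j_{s_a}(s_1)(\lambda(t_i'))$ tending to the analogous quantities at a nearby point — more precisely, one runs the induction so that at level $k-1$ the relations $\mathrm{ad}^j_{s_a}(s_1)=0$, $0\le j\le k+1$, hold at every point of a set accumulating at $t_\infty$. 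Then for each such relation $h_J(\tau):=\mathrm{ad}^j_{s_a}(s_1)$ evaluated along $(q,\lambda)$ — which is the function $t\mapsto\langle\lambda(t),f_J(q(t))\rangle$ for the appropriate word $J$, since $s_a$, $s_1$ are the Hamiltonians of $f_0+af_1$, $f_1$ and Poisson bracket corresponds to Lie bracket — we use the identity, valid up to a subsequence,
\begin{equation}
0=\lim_{i\to\infty}\frac{h_J(t_\infty)-h_J(t_i)}{t_\infty-t_i}=\lim_{i\to\infty}\frac{1}{t_\infty-t_i}\int_{t_i}^{t_\infty}\big(h_{0J}(\tau)+u(\tau)h_{1J}(\tau)\big)\,d\tau=h_{0J}(t_\infty)+\bar u\,h_{1J}(t_\infty),
\end{equation}
with $\bar u=\lim_i\frac{1}{t_\infty-t_i}\int_{t_i}^{t_\infty}u(\tau)d\tau\in[-1,1]$. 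The crucial point is that this $\bar u$ is the \emph{same} for all the relations, being the limit of one common sequence of averages of $u$, and moreover — because $t_i\in\mathcal{C}$ — Lemma~\ref{lemma:dependent} forces $\bar u=a$, the very same constant as at the previous level (the limit average of $u$ is pinned down by the collinearity, independently of which relation we differentiate). Hence $h_{0J}(t_\infty)+a\,h_{1J}(t_\infty)=0$, i.e. $\langle\lambda(t_\infty),[f_0+af_1,f_J](q(t_\infty))\rangle=0$, which is $\mathrm{ad}^{j+1}_{s_a}(s_1)(\lambda(t_\infty))=0$; applying this to $j=0,\dots,k+1$ promotes the family of vanishing relations from orders $\le k+1$ to orders $\le k+2$, completing the induction.

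The main obstacle I expect is bookkeeping the induction so that, at each level, the relations hold not merely at $t_\infty$ but on a sequence accumulating at $t_\infty$ (so that they can be differentiated again), together with the verification that the constant $a$ is stable across levels. The first issue is handled by the diagonal extraction from the Fuller-order hypothesis a) as above. The stability of $a$ is the delicate part: it requires that the limit average $\bar u$ over the diagonal subsequence coincides with the $a$ from Lemma~\ref{lemma:dependent}, which holds precisely because every point of the subsequence lies in $\mathcal{C}$ and $q(t_\infty)$ satisfies $f_1\wedge[f_0,f_1]\neq 0$, so Lemma~\ref{lemma:dependent} applies verbatim to each subsequence and always returns the unique value $\bar u$ with $f_0(q(t_\infty))+\bar u f_1(q(t_\infty))=0$ — unique because $f_1(q(t_\infty))\neq 0$. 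Everything else is the routine differentiation/integral-mean argument already carried out in Lemma~\ref{lemma:Jets}.
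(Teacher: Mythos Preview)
Your overall plan coincides with the paper's: induct on $k$, extract $a$ from Lemma~\ref{lemma:dependent}, and promote the vanishing from order $j$ to $j+1$ by the integral-mean identity used in Lemma~\ref{lemma:Jets}. You correctly single out the ``stability of $a$'' as the delicate point, but your stated resolution does not close the gap. When you invoke the inductive hypothesis at each $t_i$ (through a sequence in $\Omega\setminus\bigcup_{j\le k-1}\Omega_j$ accumulating at $t_i$), Lemma~\ref{lemma:dependent} returns the value $a_i$ determined by $f_0(q(t_i))+a_i f_1(q(t_i))=0$, \emph{not} the relation at $q(t_\infty)$. Since in general $q(t_i)\neq q(t_\infty)$, one has $a_i\neq a$; what you actually obtain at $t_i$ is $\mathrm{ad}^j_{s_{a_i}}(s_1)(\lambda(t_i))=0$, not $\mathrm{ad}^j_{s_a}(s_1)(\lambda(t_i))=0$. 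Your sentence ``Lemma~\ref{lemma:dependent} applies verbatim to each subsequence and always returns the unique value $\bar u$ with $f_0(q(t_\infty))+\bar u f_1(q(t_\infty))=0$'' is therefore incorrect for the subsequences accumulating at $t_i$, and the difference quotient in your integral-mean identity is not literally zero. (The paper's own inductive display is equally terse on this issue.)

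The missing observation is a further consequence of collinearity: since $f_0(q(t_\infty))+a f_1(q(t_\infty))=0$, the very same averaging argument applied to the trajectory gives
\[
\frac{q(t_\infty)-q(t_i)}{t_\infty-t_i}=\frac{1}{t_\infty-t_i}\int_{t_i}^{t_\infty}\big(f_0(q(\tau))+u(\tau)f_1(q(\tau))\big)\,d\tau\;\longrightarrow\;f_0(q(t_\infty))+a f_1(q(t_\infty))=0.
\]
Because the collinearity coefficient $q\mapsto a$ is Lipschitz on $\mathcal{C}$ near $q(t_\infty)$, it follows that $a-a_i=O(|q(t_\infty)-q(t_i)|)=o(|t_\infty-t_i|)$. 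Now $\mathrm{ad}^{j}_{s_a}(s_1)(\lambda(t_i))=\mathrm{ad}^{j}_{s_a}(s_1)(\lambda(t_i))-\mathrm{ad}^{j}_{s_{a_i}}(s_1)(\lambda(t_i))=(a-a_i)\cdot O(1)$ (the expression being polynomial in $a$ with bounded coefficients), so the difference quotient does tend to zero and the integral mean yields $\mathrm{ad}^{j+1}_{s_a}(s_1)(\lambda(t_\infty))=0$, completing the induction. In short, the right statement is not $a_i=a$ but $a_i-a=o(|t_i-t_\infty|)$.
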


\begin{proof}
	We proceed by induction on $k$, and we begin with the  case $k=0$. 
	First notice that 
	for 
	$t\in \Omega$ 
	both	$h_0(t)=0$ and $h_1(t)=0$. Hence, also $s_1(\lambda(t))=0$. 
	By continuity and by Rolle's theorem,  $\{s_{0},s_1\}(\lambda(t))=h_{01}(t)=0$ for every $t\in \Omega\setminus\Omega_0$. 
	Also notice that $\{s_{0},s_1\}={\rm ad}_{s_a}s_1$ for every $a\in [-1,1]$.
	Moreover, by item b) and Lemma \ref{lemma:dependent},
	there exists 
$$a=\lim_{i\to\infty}\frac{1}{t_\infty-t_{i}}\int_{t_{i}}^{t_\infty}u(\tau)d\tau\in [-1,1]
$$
and   
  $f_0(q(t_\infty))+a f_1(q(t_\infty))=0$.

From the identity
	\begin{align*}
		0&=\frac{1}{t_\infty-t_i}\int_{t_i}^{t_\infty}\frac{d}{d\tau}\{s_0,s_1\}(\lambda(\tau))d\tau\\ &=\frac{1}{t_\infty-t_i}\int_{t_i}^{t_\infty}\left(\{s_0,\{s_0,s_1\}\}(\lambda(\tau))+u(\tau)\{s_1,\{s_0,s_1\}\}(\lambda(\tau))\right)d\tau,
	\end{align*}
	which is valid for every $i\in\N$, passing to the limit as $i\to\infty$ we deduce the further relation $\mathrm{ad}_{s_{a}}^2 s_1(\lambda(t_\infty))=\mathrm{ad}_{s_{a}}\{s_0,s_1\}(\lambda(t_\infty))=0$.
	
	Assume now that the theorem holds for some $k\in\N$, and consider any sequence of points $\{t_i\}_{i\in\N}\in\Omega\setminus(\bigcup_{j=0}^{k+1}\Omega_j)$ satisfying items a) and b). 
	Apply Lemma~\ref{lemma:dependent}  and define  $a$ as above. 
The conclusion comes from noticing that 
	\begin{align}\label{eq:limdep2}
		0&=\frac{1}{t_\infty-t_i}\int_{t_i}^{t_\infty}\frac{d}{d\tau}\mathrm{ad}^{k+1}_{s_{a}}(s_{1})(\lambda(\tau))d\tau\nonumber\\ &=\frac{1}{t_\infty-t_i}\int_{t_i}^{t_\infty}
		(\mathrm{ad }_{s_0}\mathrm{ad}^{k+1}_{s_{a}}(s_{1})+u(\tau) \mathrm{ad }_{s_1}\mathrm{ad}^{k+1}_{s_{a}}(s_{1})
		)(\lambda(\tau))d\tau\\
		& \to  \mathrm{ad}^{k+2}_{s_{a}}(s_{1})(\lambda(t_\infty))\mbox{ as $i\to\infty$}.
	\end{align}
\end{proof}

Inspired by the arguments of \cite[Definition 4 and Lemma 4]{BonnardKupka}, we are now in the position of deducing quantitative estimates on the possible accumulations of points of $\Omega$ within the collinearity set $\mathcal{C}$.

\begin{lemma}\label{lemma:collinear}
	Let $q\in M$ and $N=n-1$. 
	Let us define the following two subsets of $J_q^NM\times J_q^NM$:
	\begin{align*}
		\mathcal{L}'&=\big\{ (j_q^N(f_0),j_q^N(f_1))\in J^N_qM\times J^N_qM\mid \mathrm{dim}\left(\mathrm{span}\{f_0(q),f_1(q),[f_0,f_1](q)\}\right)\leq 1\big\},\\
		\mathcal{L}''&=\big\{ (j_q^N(f_0),j_q^N(f_1))\in J^N_qM\times J^N_qM\mid f_1(q)\neq 0,\:\exists\, a\in\R\textrm{ such that }f_0(q)=af_1(q)\\&\hphantom{xxxxxxxx}\textrm{and }\mathrm{dim}\left(\mathrm{span}\{\mathrm{ad}^i_{f_0+a f_1}(f_1)(q)\mid 0\leq i\leq n-1\}\right)<n\big\}.
	\end{align*}
	Then $$\mathrm{codim}_{J^N_qM\times J^N_qM}\mathcal{L}'=2n-2\quad\textrm{and}\quad\mathrm{codim}_{J^N_qM\times J^N_qM}\mathcal{L}''=n.$$
\end{lemma}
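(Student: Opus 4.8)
The plan is to treat $\mathcal{L}'$ and $\mathcal{L}''$ separately, in each case reducing the computation to a determinantal condition on jets.

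\emph{The set $\mathcal{L}'$.} I would fix local coordinates $(x_1,\dots,x_n)$ centred at $q$ and consider the map
$$\Phi\colon J^N_qM\times J^N_qM\longrightarrow (\R^n)^3,\qquad \Phi\big(j^N_q(f_0),j^N_q(f_1)\big)=\big(f_0(q),f_1(q),[f_0,f_1](q)\big),$$
which depends only on the $0$- and $1$-jets of $f_0$ and $f_1$. Writing $R_1$ for the locus of $n\times 3$ matrices of rank at most $1$ (equivalently, triples of vectors spanning a subspace of dimension $\le 1$), one has $\mathcal{L}'=\Phi^{-1}(R_1)$, and $R_1$ is irreducible of codimension $(n-1)(3-1)=2n-2$. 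On the open set $U=\{f_0(q)\ne 0\text{ or }f_1(q)\ne 0\}$ the map $\Phi$ is a submersion onto an open set: the $0$-jets produce $f_0(q),f_1(q)$ directly, and if some component $\alpha_0^{i_0}$ of $f_0$ is nonzero at $q$ then the first derivatives $\partial_{x_{i_0}}\alpha_1^j(q)$, $1\le j\le n$, move $[f_0,f_1](q)$ freely without affecting $f_0(q),f_1(q)$. Since $\Phi(U)$ meets $R_1$ (e.g.\ the triple with all columns equal to $\partial_{x_1}$ is attained), $\mathcal{L}'\cap U$ has codimension $2n-2$ in $U$. On $U^c=\{f_0(q)=f_1(q)=0\}$ one automatically has $[f_0,f_1](q)=0$, so $U^c\subset\mathcal{L}'$, but $\mathrm{codim}\,U^c=2n$; hence $\mathrm{codim}\,\mathcal{L}'=\min(2n-2,2n)=2n-2$.

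\emph{The set $\mathcal{L}''$.} I would introduce
$$X=\big\{(j^N_q(f_0),j^N_q(f_1))\mid f_1(q)\ne 0,\ f_0(q)\wedge f_1(q)=0\big\},$$
a smooth connected locally closed subset of the jet space of codimension $n-1$ (with $f_1(q)\ne 0$ fixed, collinearity of $f_0(q)$ with $f_1(q)$ is $n-1$ independent scalar conditions). On $X$ the scalar $a$ determined by $f_0(q)=af_1(q)$ is a regular function of the $0$-jets, so
$$D:=\det\big[\,\mathrm{ad}^{0}_{f_0+af_1}(f_1)(q)\ \big|\ \cdots\ \big|\ \mathrm{ad}^{\,n-1}_{f_0+af_1}(f_1)(q)\,\big]$$
is a regular function on $X$, polynomial in $a$ and in the jets of $f_0,f_1$ of order $\le n-1$ — which is precisely the order retained, since $N=n-1$. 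By construction $\mathcal{L}''=X\cap\{D=0\}$, so it suffices to check that $D$ is nonconstant on the irreducible variety $X$, whence $\{D=0\}$ has pure codimension $1$ in $X$ and $\mathrm{codim}\,\mathcal{L}''=(n-1)+1=n$. That $D\not\equiv0$ would be seen by taking $f_1=\partial_{x_1}$ and $f_0=x_1\partial_{x_2}+\dots+x_{n-1}\partial_{x_n}$: here $a=0$ and a direct computation gives $\mathrm{ad}^{i}_{f_0}(f_1)(q)=\pm\partial_{x_{i+1}}$ for $0\le i\le n-1$, a basis of $T_qM$. That $D$ also vanishes somewhere on $X$ is clear, e.g.\ from $f_0=f_1=\partial_{x_1}$.

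The bracket computations and the submersivity of $\Phi$ are routine; the point requiring care is the codimension accounting for $\mathcal{L}'$. Precisely because $\Phi$ degenerates exactly on $\{f_0(q)=f_1(q)=0\}$, one must verify separately that this locus — which is contained in $\mathcal{L}'$ — has codimension strictly larger than $2n-2$, so that it does not lower the count; the analogous, easier remark for $\mathcal{L}''$ is that $\mathcal{L}''$ is nonempty, which guarantees that $D$ is genuinely nonconstant on the irreducible variety $X$.
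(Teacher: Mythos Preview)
Your argument is correct and follows the same route as the paper: for $\mathcal{L}''$ both you and the authors decompose into the collinearity locus of codimension $n-1$ and then cut by the determinant $D=\det(f_1,\dots,\mathrm{ad}^{n-1}_{f_0+af_1}f_1)$, while for $\mathcal{L}'$ the paper merely declares the result ``clear'' and your submersion/rank-stratum argument fills this in. Your version is in fact more careful than the paper's, since you explicitly exhibit points of $X$ where $D$ is nonzero and where it vanishes, so that the hypersurface $\{D=0\}$ genuinely has codimension one in $X$.
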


\begin{proof}
	The first assertion is clear. For the second one just notice that for every $a\in \R$, the dimension of $\mathrm{span}\{\mathrm{ad}^i_{f_0+af_1}(f_1)(q)\mid 0\leq i\leq n-1\}$ is smaller than $n$ if and only if, in coordinates, 
	$$\det(H)=0,\qquad \mbox{ with }\quad H=\left(f_1,\dots,\mathrm{ad}^{n-1}_{f_0+af_1}(f_1)\right).$$ 
	The latter condition, taking as $a$ the unique scalar such that $f_0(q)+a f_1(q)=0$, identifies a set of codimension one inside 
	$$\mathcal{D}:=\left\{(j_q^N(f_0),j_q^N(f_1))\in J^N_qM\times J^N_qM\mid f_1(q)\neq 0, f_0(q)\wedge f_{1}(q)=0\right\}.$$ 
	Summing it up, we deduce that 
	\begin{align*}
		\mathrm{codim}_{J^N_qM\times J^N_qM}\mathcal{L}''&=\mathrm{codim}_{J^N_qM\times J^N_qM}\mathcal{D}\\&+\mathrm{codim}_{\mathcal{D}}\left\{(j_q^N(f_0),j_q^N(f_1))\in J^N_qM\times J^N_qM\mid\det(H)=0 \right\}\\&=(n-1)+1=n.
	\end{align*}
\end{proof}

\begin{corollary}\label{cor:collinear}
Let $n\ge 2$.
	For a generic pair $(f_0,f_1)\in\mathrm{Vec}(M)^2$ and for every extremal trajectory of the time-optimal control problem \eqref{eq:si}, 
	we have 
	$\Omega=\Omega_0\cup\dots\cup \Omega_{n-2}$, where $\Omega$ and $\Omega_j$ are defined as in Definition~\ref{d:Omega}.
\end{corollary}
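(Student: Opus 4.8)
The plan is to combine Theorem~\ref{thm:collinear} with the codimension count of Lemma~\ref{lemma:collinear} through a Thom transversality argument, exactly in the spirit announced in the introduction. Fix $N=n-1$ and work with the jet bundle $J^NM\times J^NM$ over $M$. The key point is that a deep accumulation of points of $\Omega$ forces, at the limit point $q(t_\infty)$, either a violation of $f_1(q)\wedge[f_0,f_1](q)\ne 0$ (which lands us in $\mathcal{L}'$, since $f_0(q_\infty)$ and $f_1(q_\infty)$ are already collinear at such a point) or the relations \eqref{eq:relations}, which for $k=n-2$ give $\mathrm{ad}^j_{s_a}(s_1)(\lambda(t_\infty))=0$ for $0\le j\le n$. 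Since $\lambda(t_\infty)\ne 0$ annihilates the $n+1$ vectors $\mathrm{ad}^j_{f_0+af_1}(f_1)(q(t_\infty))$, $0\le j\le n$, in the $n$-dimensional tangent space, a fortiori the $n$ vectors with $0\le j\le n-1$ are linearly dependent, i.e. $(j^N_{q}(f_0),j^N_q(f_1))\in\mathcal{L}''$. Thus in both cases the jet of $(f_0,f_1)$ at $q(t_\infty)$ lands in $\mathcal{L}'\cup\mathcal{L}''$, a set of codimension $\min(2n-2,n)=n$ in $J^N_qM\times J^N_qM$ for $n\ge 2$.

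First I would make precise the reduction. Suppose, towards a contradiction, that for some generic pair the set $\Omega\setminus(\bigcup_{j=0}^{n-2}\Omega_j)$ is nonempty for some extremal trajectory; pick $t_\infty$ in it. By definition of the sets $\Omega_j$, $t_\infty$ is not isolated in $\Omega\setminus(\bigcup_{j=0}^{n-3}\Omega_j)$, so there is a sequence $t_i\to t_\infty$ with $t_i\in\Omega\setminus(\bigcup_{j=0}^{n-3}\Omega_j)$; after relabelling this is the sequence with $k=n-3$ in the hypotheses of Theorem~\ref{thm:collinear} (or one handles the small cases $n=2,3$ directly, where $k=0$ is already available from $\Omega\setminus\Omega_0$ being the relevant set). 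There are then two alternatives: either $q(t_\infty)\in\{f_1\wedge[f_0,f_1]\ne 0\}$, in which case Theorem~\ref{thm:collinear} applies and yields \eqref{eq:relations} up to order $k+2=n-1$ — one extra differentiation along the same lines as \eqref{eq:limdep2}, legitimate since all the $t_i$ still accumulate, upgrades this to order $n$, placing the jet in $\mathcal{L}''$; or $q(t_\infty)\notin\{f_1\wedge[f_0,f_1]\ne 0\}$, and since $f_0(q(t_\infty))\wedge f_1(q(t_\infty))=0$ by continuity (the $t_i\in\Omega\subset\{q\in\mathcal C\}$), we get $\mathrm{dim}\,\mathrm{span}\{f_0,f_1,[f_0,f_1]\}(q(t_\infty))\le 1$, i.e. the jet is in $\mathcal{L}'$.

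Next I would run the transversality machinery. Define the ``bad'' subset $\mathcal{W}=\{(q,j^N_q(f_0),j^N_q(f_1))\mid (j^N_q(f_0),j^N_q(f_1))\in\mathcal{L}'_q\cup\mathcal{L}''_q\}$ of the jet bundle; by Lemma~\ref{lemma:collinear} it has codimension $n$ fibrewise, hence codimension $n$ in the total space (which has an extra $n$ from the base $M$), so $\dim\mathcal{W}=\dim(J^NM\times J^NM)-n$. The jet extension $j^N(f_0,f_1):M\to J^NM\times J^NM$ is a section, so its image has dimension $n$; by Thom's transversality theorem (\cite[Proposition 19.1]{abraham-robbin}) there is a residual, and by the stratification argument combining with \cite[\S1.3.2]{goresky2012stratified} an \emph{open and dense}, set of pairs $(f_0,f_1)$ for which $j^N(f_0,f_1)$ is transversal to (every stratum of) $\mathcal{W}$. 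Transversality of an $n$-dimensional section image to a codimension-$n$ set in an ambient manifold of dimension $n+\dim(J^NM\times J^NM)$ forces the intersection to be empty: for such generic $(f_0,f_1)$ there is no $q$ with $(j^N_q(f_0),j^N_q(f_1))\in\mathcal{L}'_q\cup\mathcal{L}''_q$. But the reduction above showed such a $q=q(t_\infty)$ must exist if $\Omega\ne\Omega_0\cup\dots\cup\Omega_{n-2}$ for some extremal trajectory. This contradiction proves the corollary.

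The main obstacle I expect is bookkeeping the transversality argument cleanly: $\mathcal{L}'$ and $\mathcal{L}''$ are not smooth submanifolds but semialgebraic sets in the fibre, so one must stratify them (using that $\mathcal{L}'$ is cut out by rank conditions and $\mathcal{L}''$ by a single determinant inside the hypersurface $\mathcal D$) and check that each stratum still has codimension $\ge n$; the codimension claims of Lemma~\ref{lemma:collinear} are exactly what makes this go through, but one has to be careful that $\mathcal{L}''$ genuinely contributes codimension $n$ and not less once one allows $f_1(q)=0$ (which is absorbed into the higher-codimension locus $\{f_1(q)=0\}\subset\mathcal{L}'$ when also $f_0\wedge f_1=0$, so no loss). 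A secondary subtlety is the small-dimension edge cases and making sure the order-$k$ induction in Theorem~\ref{thm:collinear} is invoked with the correct $k=n-3$ and then pushed one derivative further to reach order $n$; alternatively one can invoke Theorem~\ref{thm:collinear} with $k=n-2$ directly by choosing the accumulating sequence inside $\Omega\setminus(\bigcup_{j=0}^{n-2}\Omega_j)$, which is cleaner and avoids the extra differentiation.
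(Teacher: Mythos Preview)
Your strategy matches the paper's: any $t_\infty\in\Omega\setminus(\bigcup_{j=0}^{n-2}\Omega_j)$ forces $(j^N_{q(t_\infty)}(f_0),j^N_{q(t_\infty)}(f_1))\in\mathcal L'\cup\mathcal L''$, and one then invokes Thom transversality. The genuine gap is in your transversality conclusion. You write that ``transversality of an $n$-dimensional section image to a codimension-$n$ set \dots forces the intersection to be empty''; this is false. When the codimension of the target set equals the dimension of the source, generic transversality yields a $0$-dimensional (discrete) preimage, not an empty one --- for emptiness you would need codimension strictly greater than $n$, whereas $\mathcal L''$ has codimension exactly $n$ by Lemma~\ref{lemma:collinear}. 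Thus for generic $(f_0,f_1)$ the bad locus $\{q\in M:(j^N_q(f_0),j^N_q(f_1))\in\mathcal L'\cup\mathcal L''\}$ consists of isolated points of $M$ but is typically nonempty, and your contradiction (``there is no $q$ with\dots'') collapses. The paper's proof states precisely this weaker, correct conclusion (``only at isolated points of $M$''); the passage from a discrete bad locus in $M$ to $\Omega=\bigcup_{j=0}^{n-2}\Omega_j$ is a separate step that your argument does not address.

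A smaller slip: your ``cleaner alternative'' of applying Theorem~\ref{thm:collinear} with $k=n-2$ by taking the accumulating sequence inside $\Omega\setminus(\bigcup_{j=0}^{n-2}\Omega_j)$ does not work as stated. Membership of $t_\infty$ in that set only says $t_\infty$ is non-isolated in $\Omega\setminus(\bigcup_{j=0}^{n-3}\Omega_j)$, so you are guaranteed a sequence in the latter set, not the former. This is exactly why the paper uses $k=n-3$: it already yields the relations $\mathrm{ad}^j_{s_a}(s_1)(\lambda(t_\infty))=0$ for $0\le j\le n-1$, which is enough to land in $\mathcal L''$ without any ``extra differentiation''.
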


\begin{proof}
	If along  an extremal triple $(q(\cdot),u(\cdot),\lambda(\cdot))$ there exists $t\in\Omega\setminus(\bigcup_{j=0}^{n-3}\Omega_j)$, which is not isolated in this set and such that $q(t)\in\{q\in M\mid f_1(q)\wedge f_{01}(q)\neq 0\}$, then by Theorem~\ref{thm:collinear} $\lambda(t)$ annihilates $\mathrm{ad}^i_{f_0+af_1}(f_1)(q(t))$ for every $0\leq i\leq n-1$, where $a$ is the proportionality coefficient between $-f_0(q(t))$ and $f_1(q(t))$. By Lemma~\ref{lemma:collinear} and Thom's 	transversality theorem (see, e.g., 
	\cite{abraham-robbin,goresky2012stratified}),  
		for a generic pair $(f_0,f_1)\in\mathrm{Vec}(M)^2$ this is possible only at isolated points of $M$. 
		Equivalently, for a generic pair $(f_0,f_1)\in\mathrm{Vec}(M)^2$ the set $\Omega$ is equal to $\bigcup_{j=0}^{n-2}\Omega_j$. 
	On the other hand, another application of Thom's transversality theorem  
	says that,  for a generic choice of $(f_0,f_1)$, the points $q\in M$ such that $f_0(q)\wedge f_{1}(q)=0$ and $f_1(q)\wedge f_{01}(q)=0$ are isolated (since $2n-2\geq n$ when $n\ge2$). 
	 This concludes the proof. 
\end{proof}

\section{Proof of Theorem \ref{t:main}}\label{sec:proof}

Theorem~\ref{t:main} directly follows from Theorem~\ref{thm:start}
and Proposition~\ref{p:intermezzo} below. 

\begin{proposition}\label{p:intermezzo}
There exists 
an open and dense set $\mc{V}\subset \mathrm{Vec}(M)^2$ such that, 
		for any pair 
		$(f_0,f_1)\in\mc{V}$ and 
for any extremal triple $(q(\cdot),u(\cdot),\lambda(\cdot))$ on $[0,T]$ 
of the time-optimal control problem \eqref{eq:si},
 the set 
 $$\Xi=\{t\in [0,T]\mid h_1(t)=h_{01}(t)=h_{+01}(t)= 0\mbox{ or }h_1(t)=h_{01}(t)=h_{-01}(t)= 0\}$$ 
 satisfies $\Xi=\Xi_1\cup \dots \cup \Xi_{(n-1)^2}$, 
where $\Xi_{1}$ denotes the set of isolated points of $\Xi$ and
$\Xi_{j+1}$ denotes the set of isolated points of $\Xi\setminus \cup_{i=1}^j\Xi_i$ for $j\ge 1$.  
\end{proposition}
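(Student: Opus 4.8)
The plan is to build the set $\mathcal{V}$ as a countable intersection of open dense sets, one for each ``level'' of accumulation, by iterating the annihilation mechanism of Lemmas~\ref{lemma:Jets} and \ref{lemma:codim} and controlling the codimension growth with Thom's transversality theorem. First I would fix $N=N(n)$ large enough (roughly $N$ of order $(n-1)^2$, so that all the iterated brackets involved have length at most $N$), and work in the product jet bundle $T^*M\times J^NM\times J^NM$. The key bookkeeping observation is that a point $t\in\Xi$ carries the three vanishing conditions $h_1(t)=h_{01}(t)=h_{\pm01}(t)=0$ coming from Theorem~\ref{thm:start}; writing $I_1=(1)$, $I_2=(01)$, $I_3=(\pm01)$ we are exactly in the setting where $I_3=(\pm I_2)$, so Lemma~\ref{lemma:Jets} applies and produces a fourth relation from the list $\{S_{(-I_2)},S_{(-I_3)},S_{(+I_3)}\}$ at any non-isolated point of $\Xi$. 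Continuing, whenever the new relation is again of the form $(\pm I_{\text{last}})$ we re-apply Lemma~\ref{lemma:Jets}, and whenever two relations of the form $(0I_j)$ and $(1I_j)$ appear simultaneously we switch to Lemma~\ref{lemma:codim}, which either adds a polynomial relation $Q_{h+1}$ (on the open set $\{S_{I_{l+2}}\ne0\}$) or adds two simple relations at once. In both lemmas the crucial point is that, \emph{provided} $f_0(q(t))\wedge f_1(q(t))\ne0$, each step strictly increases the codimension of the relevant stratum $\mathcal{B}'_q$ by at least one.

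Next I would set up the induction precisely. Define $\Xi=\Xi_1\cup\Xi_2\cup\cdots$ as in the statement, and suppose for contradiction that for some extremal triple the set $\Xi\setminus\bigcup_{i=1}^{(n-1)^2}\Xi_i$ is nonempty, i.e.\ there is $t_\infty$ that is a ``Fuller time of level $(n-1)^2+1$ for $\Xi$''. Unwinding the definition, $t_\infty$ is an accumulation of accumulations of \dots\ of points of $\Xi$, iterated $(n-1)^2$ times, so one extracts $(n-1)^2$ nested converging sequences. Applying the above lemmas successively along these sequences yields, at $(\lambda(t_\infty),j^N_{q(t_\infty)}(f_0),j^N_{q(t_\infty)}(f_1))$, a family of relations whose common zero set has codimension at least $(n-1)^2+1$ inside the fiber $T^*_qM\times J^N_qM\times J^N_qM$ — \emph{as long as} one stays in the region $\mathcal{C}^c=\{f_0\wedge f_1\ne0\}$. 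Now Thom's transversality theorem (\cite[Proposition 19.1]{abraham-robbin}, made open via \cite[\S1.3.2]{goresky2012stratified}) gives an open dense $\mathcal{V}_1\subset\mathrm{Vec}(M)^2$ such that, for $(f_0,f_1)\in\mathcal{V}_1$, the set of $(q,\lambda)\in(T^*M)|_{\mathcal{C}^c}$ at which $\dim\mathrm{span}\{f_0(q),f_1(q)\}=2$ and all those relations vanish simultaneously has image in $M$ contained in a submanifold of dimension $\le n - ((n-1)^2+1) + (n-1) < 0$ (here the $(n-1)$ accounts for the freedom of $\lambda$ in the projectivized fiber, and the projection onto $M$ of the extremal lift $t\mapsto(\lambda(t),q(t))$), hence empty. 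So on $\mathcal{C}^c$ no such $t_\infty$ exists.

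Finally one must handle the collinear locus $\mathcal{C}=\{f_0(q)\wedge f_1(q)=0\}$, where the codimension counting of Lemmas~\ref{lemma:Jets}--\ref{lemma:codim} breaks down. Here I invoke the ``collinear case'' machinery: Corollary~\ref{cor:collinear} gives a generic $\mathcal{V}_2$ for which $\Omega=\Omega_0\cup\cdots\cup\Omega_{n-2}$, i.e.\ accumulations of $\Xi\cap\mathcal{C}$-type points stabilize after $n-1$ steps at points where $f_1\wedge f_{01}\ne0$, and a further Thom argument (the $2n-2\ge n$ inequality in the proof of Corollary~\ref{cor:collinear}) makes the ``bad'' collinear points, where even $f_1(q)\wedge[f_0,f_1](q)=0$, isolated in $M$ for a generic $\mathcal{V}_3$. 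Combining: on $\mathcal{V}=\mathcal{V}_1\cap\mathcal{V}_2\cap\mathcal{V}_3$ (a finite intersection of open dense sets, hence open dense), any hypothetical deep accumulation point $t_\infty$ must lie over $\mathcal{C}^c$ or over a point where $f_1\wedge f_{01}\ne0$, and in either regime the argument above or Theorem~\ref{thm:collinear}+Lemma~\ref{lemma:collinear} forbids accumulations beyond level $(n-1)^2$ (resp.\ $n-2<(n-1)^2$). I expect the \textbf{main obstacle} to be the precise accounting that keeps the codimension strictly increasing through the \emph{mixed} sequence of Lemma~\ref{lemma:Jets} and Lemma~\ref{lemma:codim} steps — in particular verifying that after a Lemma~\ref{lemma:codim} step the hypotheses needed to restart the iteration (the structural form $I_{l-1}=(0I_j)$, $I_l=(1I_j)$, or $I_l=(+I_{l-1})$) are indeed met, and that the worst-case number of steps before the codimension reaches $n$ (forcing emptiness after projection) is exactly $(n-1)^2$ rather than something larger; this is where the quadratic bound in $n$ comes from and it must be tracked carefully along every branch of the case analysis.
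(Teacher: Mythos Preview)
Your high-level plan coincides with the paper's: iterate Lemmas~\ref{lemma:Jets} and~\ref{lemma:codim}, count codimension, apply Thom transversality on $\mathcal{C}^c$, and absorb the collinear locus via Corollary~\ref{cor:collinear}. However, the central step of your argument---the claim that ``each step strictly increases the codimension of the relevant stratum $\mathcal{B}'_q$ by at least one''---is \emph{false}, and this is not a technicality but the heart of the matter.

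Reread the codimension conclusions of Lemma~\ref{lemma:codim}. If the current stratum is cut out by $l$ simple relations and $h$ polynomial relations $Q_1,\dots,Q_h$ (total codimension $l+h$), then in the alternative $S_{I_{l+2}}=0$ the new stratum is $\mathcal{B}'_q\cap\{S_{I_{l+1}}=0\}\cap\{S_{I_{l+2}}=0\}$, whose codimension is $l+2$: the polynomial relations are \emph{discarded}. For $h\ge 3$ this is a strict drop in codimension. Consequently your count ``codimension at least $(n-1)^2+1$ after $(n-1)^2$ steps'' is unfounded; indeed, were the codimension really monotone, $2n-3$ steps would already suffice to exceed the threshold $2n-1$, and the bound would be linear in $n$, not quadratic. (Your dimension inequality $n-((n-1)^2+1)+(n-1)<0$ also fails for $n\le 3$, so even on its own terms the argument does not close.)

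The paper supplies the missing idea: a discrete dynamical system on $\mathbb{N}^2$ tracking the pair $(l,m)$ of simple and polynomial relations, with transitions $F_0(l,m)=(l+1,m)$, $F_1(l,m)=(l,m+1)$, and the \emph{reset} $F_2(l,m)=(l+2,0)$ corresponding to the three outcomes of the two lemmas. One then computes that the longest admissible trajectory starting at $(3,0)$ and remaining in the triangle $\{l+m\le 2n-1\}$ has length $1+(n-2)(n-1)$; adding the $n-2$ further levels allowed inside $\mathcal{C}$ by Corollary~\ref{cor:collinear} gives exactly $(n-1)^2$. You correctly flag this accounting as ``the main obstacle'', but you neither provide it nor notice that it cannot be bypassed, since the monotonicity you assume simply does not hold.
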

\begin{proof}
Let $k\in\N$, $(f_0,f_1)\in\mathrm{Vec}(M)^2$ and $(q(\cdot),u(\cdot),\lambda(\cdot))$ be a time-extremal trajectory of the time-optimal control problem \eqref{eq:si}. 
Let $t\in\Xi\setminus(\bigcup_{j=1}^k\Xi_j)$ and assume for now
that $f_0(q(t))\wedge f_1(q(t))\neq 0$. Owing to the fact that $t$ is an accumulation point for $\Xi\setminus(\bigcup_{j=1}^{k-1}\Xi_j)$ and reasoning iteratively, 
we identify a set $\{ t_{n_1,\dots,n_r}\mid r=1,\dots,k,\;n_1,\dots,n_r\in\N\}$ such that 
\begin{align*}
\lim_{n_1\to \infty} t_{n_1}&=t,\\
\lim_{n_r\to \infty} t_{n_1,\dots,n_r}&=t_{n_1,\dots,n_{r-1}},\quad & \mbox{for  $r=2,\dots,k$ and  $n_1,\dots,n_{r-1}\in\N$,}\\
 t_{n_1,\dots,n_r}&\in \Xi\setminus \bigcup_{j=1}^{k-r}\Xi_{j},&\mbox{for  $r=2,\dots,k$ and $n_1,\dots,n_r\in\N$,}
\\
 t_{n_1,\dots,n_k}&\in \Xi,&\mbox{for  $n_1,\dots,n_k\in\N$.}
 \end{align*}

Using repeatedly Lemmas \ref{lemma:Jets} and \ref{lemma:codim} and exploiting the fact that each application of one of the two lemmas yields a finite number of alternatives,  we deduce from a diagonal extraction argument that, up to taking suitable subsequences, 
\begin{itemize}
	\item[i)] There exist 
	$k+1$ sets $\mathcal{B}_0,\dots,\mathcal{B}_k
	\subset T^*M\times J^{k+2}M\times J^{k+2}M$ such that $$(\lambda(t), j^{k+2}_{q(t)}(f_0),j^{k+2}_{q(t)}(f_1))\in 
	\mathcal{B}_k$$ and 
	$$(\lambda(t_{n_1,\dots,n_r}), j^{k+2}_{q(t_{n_1,\dots,n_r})}(f_0),j^{k+2}_{q(t_{n_1,\dots,n_r})}(f_1))\in 
	\mathcal{B}_{k-r}$$ for every $r=1,\dots,k$, $n_1,\dots,n_r\in\N$. 
	\item [ii)] For every $0\leq r\leq k$, 
	$\mathcal{B}_r$ is defined by the vanishing of, say, $l_r$ simple relations and $m_r$ polynomial relations (using the terminology of Definition~\ref{def:relations}). Moreover, denoting $\mathcal{B}_r(q)=\mathcal{B}_r\cap T^*_{q}M\times J^{k+2}_{q}M\times J^{k+2}_{q}M$, we have
	\begin{align}
	\mathrm{codim}_{T^*_{q(t)}M\times J^{k+2}_{q(t)}M\times J^{k+2}_{q(t)}M}\mathcal{B}_k(q(t))&=l_k+m_k,\\ 
	\mathrm{codim}_{T^*_{q(t_{n_1,\dots,n_r})}M\times J^{k+2}_{q(t_{n_1,\dots,n_r})}M\times J^{k+2}_{q(t_{n_1,\dots,n_r})}M}\mathcal{B}_{k-r}(q(t_{n_1,\dots,n_r}))&=l_{k-r}+m_{k-r},
	\end{align}
	 for every $r=1,\dots,k$, $n_1,\dots,n_r\in\N$. 
	\end{itemize}
By construction, the set $\mathcal{B}_k(q)$ is homogeneous with respect to the first component.
To prove the proposition in the set $\{ q\in M \mid f_0(q)\wedge f_1(q)\neq 0\}$ it is then sufficient to show that there exists $K\le (n-1)^2$ such that if $k\ge K$, then  
there exists $r\in \{0,\dots,k\}$ such that the codimension $l_{k-r}+m_{k-r}$ 
of  $\mathcal{B}_{k-r}(q(t_{n_1,\dots,n_r}))$ in $T^*_{q(t_{n_1,\dots,n_r})}M\times J^{k+2}_{q(t_{n_1,\dots,n_r})}M\times J^{k+2}_{q(t_{n_1,\dots,n_r})}M$ 
is strictly larger than $2n-1$. Indeed, if this were true, then denoting by $\pi:T^*M\times J^{k+2}M\times J^{k+2}M\to J^{k+2}M\times J^{k+2}M$ the canonical projection, we would conclude by standard transversality arguments 
\cite{goresky2012stratified}
combined with
the inequality
\begin{align}
&\mathrm{codim}_{J^{k+2}_{q(t_{n_1,\dots,n_r})}M\times J^{k+2}_{q(t_{n_1,\dots,n_r})}M}\pi\left(\mathcal{B}_{k-r}(q(t_{n_1,\dots,n_r}))\right)\\
&\hphantom{xxxxx}\geq \mathrm{codim}_{T^*_{q(t_{n_1,\dots,n_r})}M\times J^{k+2}_{q(t_{n_1,\dots,n_r})}M\times J^{k+2}_{q(t_{n_1,\dots,n_r})}M}\mathcal{B}_{k-r}(q(t_{n_1,\dots,n_r}))-n+1>n,
\end{align}
where the term $+1$ 
is due to the homogeneity of $\mathcal{B}_{k-r}(q)$ with respect to the first component.

We introduce now a discrete dynamics on $\N^2$, which describes the admissible patterns 
of $r\mapsto (l_{r},m_{r})$. 
Define three mappings $F_0,F_1,F_2:\N^2\to\N^2$ by
$$F_0(x_1,x_2)=(x_1,x_2)+(1,0),\quad 
F_1(x_1,x_2)=(x_1,x_2)+(0,1), \quad 
F_2(x_1,x_2)=(x_1,0)+(2,0).
$$ 
We say that an admissible curve $\gamma$ of length $p\in \N$ for this dynamical system is a 
map $\gamma:\{0,\dots,p\}\to \N^2$  such that
	\begin{itemize} 
		\item[i)] $\gamma(0)=(3,0)$,
		\item[ii)] there exists $j\in\{1,\dots,p\}$ such that  $\gamma(i)=F_0(\gamma(i-1))$ for $i=1,\dots,j$ 
	and $\gamma(i)=F_{\sigma_i}(\gamma(i-1))$, with $\sigma_{i}\in\{1,2\}$, for $i=j+1,\dots,p$.
	\end{itemize}
Observe that the initial condition fixed in i) reflects the definition of $\Xi$, 
$F_0$ describes the creation of a new simple relation (Lemma~\ref{lemma:Jets}), while $F_1$ and $F_2$ encode the occurrence of, respectively, a new polynomial relation and two new simple relations (Lemma~\ref{lemma:codim}).

We are going to compute the minimal $K$ so that, for $k\ge K$, any admissible curve $\gamma$ of length $k$ exits the region $T:=\{(x_1,x_2)\in\N^2\mid x_1+x_2\leq 2n-1\}$.
It is not difficult to see that the longest admissible curve $\gamma$ staying in $T$ is as indicated in Figure~\ref{f:triangle}, that is, we apply once $F_0$, then $2n-5$ times $F_1$, then once $F_2$, then $2n-7$ times $F_1$, once $F_2$, and so on. 
The length of such curve $\gamma$ is equal to 
$$\textrm{length}(\gamma)=1+(2n-5)+1+(2n-7)+1+\dots+(2n-(2n-1))=(n-2)(n-1),$$ 
which implies that $K= 1+(n-2)(n-1)$.
\begin{figure}[h!]
	\includegraphics[scale=.8]{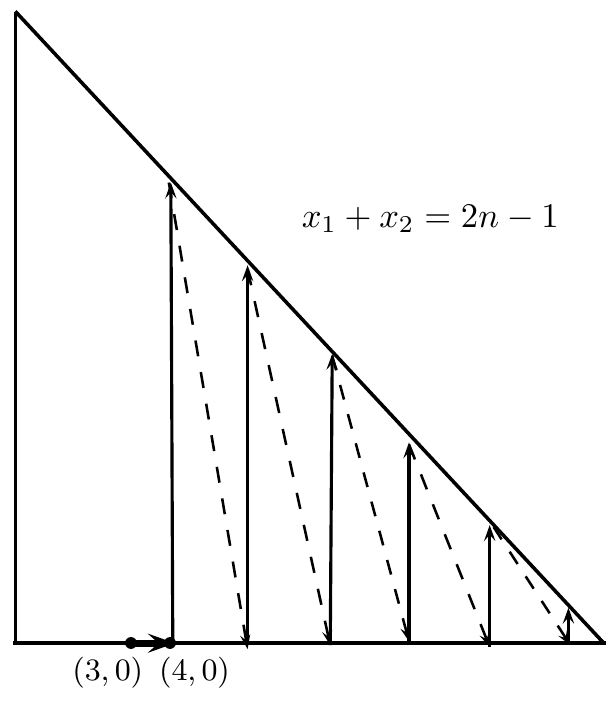}
	\caption{The longest admissible curve $\gamma$}\label{f:triangle}
\end{figure}

It just remains to explain what can happen inside the collinearity set $\mathcal{C}$ introduced in \eqref{defi:C}:
for a generic choice of $(f_0,f_1)$, along any extremal trajectory the points of $\Omega$ can accumulate at most $n-2$ times according to 
Corollary~\ref{cor:collinear}. On the other hand any point of $\Omega$ is itself an element of $\Xi\setminus(\bigcup_{j=1}^{K}\Xi_j)$ at worst, which implies that the order of the Fuller points can increase at most by $n-2$ within $\mathcal{C}$. This concludes the proof of Proposition~\ref{p:intermezzo} since $K+n-2=1+(n-2)(n-1)+n-2= (n-1)^2$. 
\end{proof}

\section{Time-optimal trajectories in dimension 
$n=3$}\label{sec:3}

We devote this section to a more careful analysis of Fuller times for \emph{time-optimal} 
(and not only extremal)
trajectories,
in the case of a three dimensional manifold $M=M^3$. In fact, for a time-optimal trajectory there are powerful second-order techniques \cite{AgrachevGamkrelidze} that permit us to be a bit sharper in our estimate on the maximal order of Fuller points, at least if we just focus on this smaller class of curves.
 By Theorem~\ref{t:main}, we already know the upper bound $(3-1)^2=4$. 
The main result of this section is the following.

\begin{theorem}\label{thm:sharp}
	For a generic pair $(f_0,f_1)\in\mathrm{Vec}(M)^2$, none of the time-optimal trajectories of the 
	control system \eqref{eq:si} 
	has Fuller times of order greater than two.
\end{theorem}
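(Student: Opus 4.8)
The plan is to improve the bound from the $(n-1)^2=4$ established by Theorem~\ref{t:main} down to $2$, in dimension $n=3$, by exploiting the extra structure available for \emph{time-optimal} (rather than merely extremal) trajectories. The key additional ingredient is the second-order necessary condition coming from the theory of \cite{AgrachevGamkrelidze}: along a time-optimal trajectory the relevant quadratic form (a Legendre-type condition on the endpoint map, or the Goh/generalized Legendre--Clebsch conditions) must be nonpositive (or nonnegative), and this rules out certain sign patterns of the brackets $h_{+01}$, $h_{-01}$, $h_{101}$ at accumulation points that are merely necessary-but-not-sufficient for extremality. First I would recall, following the proof of Proposition~\ref{p:intermezzo}, that at a Fuller time $t\in\Sigma\setminus\Sigma_0$ one has $h_1(t)=h_{01}(t)=0$ and, by Theorem~\ref{thm:start}, $h_{+01}(t)=0$ or $h_{-01}(t)=0$; at a Fuller time of order $\ge 2$ the recursive mechanism of Section~\ref{sec:HO} produces a further relation from the list $\{S_{(-I_{l-1})},S_{(-I_l)},S_{(+I_l)}\}$ or the polynomial-relation alternative. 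In $n=3$ the cotangent fibre $T^*_qM$ is $3$-dimensional, so after the homogeneity reduction $\lambda$ lives on a $2$-dimensional projective space; three independent simple relations of the form $h_{\cdot}=0$ already force $\lambda(t)=0$, a contradiction, \emph{unless} the relations fail to be independent, which by Lemmas~\ref{lemma:Jets}--\ref{lemma:codim} happens only on a set of jets of controlled codimension. The transversality count of Proposition~\ref{p:intermezzo} then shows that Fuller times of order $3$ or $4$ can occur only over a measure-zero (in fact, for a generic pair, empty) locus once we add \emph{one more} relation than in the extremal case.

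The heart of the argument, then, is to show that time-optimality supplies exactly this one extra relation at Fuller times of order $\ge 3$ (equivalently, of order $\ge 1$ beyond what extremality already gives us to play with). Concretely I would argue as follows. Suppose $t$ is a Fuller time of order $\ge 3$ along a time-optimal trajectory, with $f_0(q(t))\wedge f_1(q(t))\ne 0$ (the collinear case being handled exactly as in the last paragraph of the proof of Proposition~\ref{p:intermezzo}, via Corollary~\ref{cor:collinear}, which in $n=3$ contributes at most $n-2=1$). By Theorem~\ref{thm:start} and the accumulation structure, near $t$ we have arbitrarily many switchings, so we are in the ``bang--bang accumulation'' regime analysed in Proposition~\ref{p:BBB}; iterating, the trajectory contains infinitely many concatenated bang arcs accumulating at a nested family of times, each of which is itself a Fuller point of lower order. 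At such a chattering junction the second-order condition of \cite{AgrachevGamkrelidze} for a \emph{time-optimal} bang--bang trajectory — which controls the sign of the quantity measuring the ``cost'' of a needle-variation combining consecutive switches — degenerates, and its degeneracy is precisely the vanishing of an additional bracket expression, of the form $h_{I}(t)=0$ for a suitable word $I$ built from $0,1,+,-$ that is not in the span of the relations already forced by the $k$-th order extremality analysis. I would make this precise by computing the second variation along a finite concatenation of bang arcs (a standard but somewhat lengthy expansion, of the type sketched in \eqref{eq:firstorder}--\eqref{eq:series}), extracting the leading quadratic term as the switching times shrink, and observing that time-optimality forces a sign on it which — combined with the fact that we are at an accumulation of such configurations from both sides — forces the leading coefficient to vanish. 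This vanishing is the sought-after new simple relation.

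With this extra relation in hand the codimension bookkeeping of Proposition~\ref{p:intermezzo} improves: the discrete dynamics on $\N^2$ now starts from an initial condition reflecting one more relation, or equivalently each accumulation step is credited with an additional $+1$, so that the region $T=\{x_1+x_2\le 2n-1\}=\{x_1+x_2\le 5\}$ is exited after strictly fewer steps; tracking the arithmetic as in Figure~\ref{f:triangle} but with the improved starting point, one gets that $k\ge 2$ already forces the codimension of the jet locus $\pi(\mathcal{B}_{k-r}(q))$ to exceed $n=3$, hence for a generic pair $(f_0,f_1)$ no such jet is realised along any trajectory, so Fuller times of order $\ge 3$ cannot occur; adding the collinear contribution $n-2=1$ from Corollary~\ref{cor:collinear} brings the final bound to $2$. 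Finally I would assemble the genericity statement: intersect the open and dense set $\mc{V}$ of Theorem~\ref{t:main} with the (open and dense) sets produced by the transversality arguments above and by Corollary~\ref{cor:collinear}, using \cite[Proposition 19.1]{abraham-robbin} together with \cite[\S 1.3.2]{goresky2012stratified} to keep the resulting set open in the Whitney $C^\infty$ topology. The main obstacle I anticipate is the honest derivation and sign analysis of the second-order condition of \cite{AgrachevGamkrelidze} along an accumulating chattering concatenation — ensuring that its degeneracy really is an \emph{independent} bracket relation and not one already implied by the first-order Fuller conditions of Sections~\ref{sec:Start}--\ref{sec:HO} — since that independence is exactly what powers the improved codimension count.
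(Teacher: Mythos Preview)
Your plan diverges substantially from the paper's proof, and as written it does not reach the target bound~$2$.

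The paper does \emph{not} attempt to extract ``one extra simple relation'' from a direct second-variation computation and then feed it back into the discrete dynamics of Proposition~\ref{p:intermezzo}. Instead, it invokes Proposition~\ref{prop:noFul} --- a collection of local classification results from \cite{AgrachevSigalotti,KS89,SigalottiJMS} --- which already encodes the second-order optimality analysis and says that on the open sets $A_1,\dots,A_6$ a time-optimal trajectory has \emph{no} Fuller times of order~$\ge 1$ at all. A jet-codimension count (Lemma~\ref{lemma:outW}) then shows that, generically, the complement of $\cup_i A_i$ in $M$ is the codimension-two set $W$ of~\eqref{eq:W} together with isolated points, so $q(\Sigma\setminus(\Sigma_0\cup\Sigma_1))\subset W$. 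The crucial gain is that on $W$ one has \emph{both} $h_{+01}=0$ and $h_{-01}=0$ (upgrading the ``or'' of Theorem~\ref{thm:start} to an ``and''), and this is a condition on the base point $q$, not merely on $\lambda$. One further accumulation inside $W$, handled via Lemma~\ref{lemma:codim}, then produces a third independent jet condition (either $f_{0101}\in\mathrm{span}\{f_1,f_{01}\}$ or the polynomial relation~\eqref{eq:destt}), which in dimension $3$ is already enough to conclude.

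Your route has two concrete problems. First, the central step --- showing that the second variation along a chattering concatenation degenerates to a new \emph{independent} bracket relation --- is only announced, and you yourself flag it as the main obstacle; nothing in Sections~\ref{sec:Start}--\ref{sec:HO} supplies it, and the side remark about ``accumulation from both sides'' is unjustified (Fuller accumulations may well be one-sided). Second, and more seriously, your arithmetic does not close. Granting one extra simple relation and starting the discrete dynamics of Proposition~\ref{p:intermezzo} at $(4,0)$ instead of $(3,0)$, the longest admissible curve inside $T=\{x_1+x_2\le 5\}$ has length~$1$, giving $K=2$ outside the collinearity set; but then the collinear correction $n-2=1$ from Corollary~\ref{cor:collinear} raises the bound to $K+(n-2)=3$, not~$2$ as you claim. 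A single extra relation is simply not enough: the paper's input from Proposition~\ref{prop:noFul} is strictly stronger, effectively yielding two independent base-point conditions (those defining $W$) at every Fuller time of order~$\ge 2$, which is what makes the final count land at~$2$.
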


For the rest of this section we adopt the following convention:  for  any subset $\Theta\subset [0,T]$, we denote by $q(\Theta)$ its image along the trajectory $q(\cdot)$.

Let us fix then a time-optimal trajectory. We collect previous results from \cite{AgrachevSigalotti,KS89,SigalottiJMS} in the following statement.

\begin{prop}\label{prop:noFul}
	Let $(f_0,f_1)\in\mathrm{Vec}(M)^2$ and $q(\cdot)$ be any time-optimal trajectory of the  control system \eqref{eq:si}. Let us consider, with the notations of Definition~\ref{def:word}, the subsets
	\begin{align*}
		A_1&=\{q\in M\mid f_1(q)\wedge f_{01}(q)\wedge f_{+01}(q)\neq 0,\; f_1(q)\wedge f_{01}(q)\wedge f_{-01}(q)\neq 0\},\\
		A_2&=\{q\in M\mid f_1(q)\wedge f_{01}(q)\wedge f_{+01}(q)= 0,\; f_1(q)\wedge f_{01}(q)\wedge f_{++01}(q)\neq 0,\\&\hphantom{=\{q\in M\mid \,\,\,}f_1(q)\wedge f_{01}(q)\wedge f_{-01}(q)\neq 0\},\\
		A_3&=\{q\in M\mid f_1(q)\wedge f_{01}(q)\wedge f_{-01}(q)= 0,\; f_1(q)\wedge f_{01}(q)\wedge f_{--01}(q)\neq 0,\\&\hphantom{=\{q\in M\mid \,\,\,}f_1(q)\wedge f_{01}(q)\wedge f_{+01}(q)\neq 0\},\\
		A_4&=\{q\in M\mid f_1(q)\wedge f_{01}(q)\wedge f_{+01}(q)= 0,\; f_1(q)\wedge f_{01}(q)\wedge f_{++01}(q)= 0,\\&\hphantom{=\{q\in M\mid \,\,\,}f_1(q)\wedge f_{01}(q)\wedge f_{+++01}(q)\neq 0,\; f_1(q)\wedge f_{01}(q)\wedge f_{-01}(q)\neq 0\},\\
		A_5&=\{q\in M\mid f_1(q)\wedge f_{01}(q)\wedge f_{-01}(q)= 0,\; f_1(q)\wedge f_{01}(q)\wedge f_{--01}(q)= 0,\\&\hphantom{=\{q\in M\mid \,\,\,}f_1(q)\wedge f_{01}(q)\wedge f_{---01}(q)\neq 0,\; f_1(q)\wedge f_{01}(q)\wedge f_{+01}(q)\neq 0\},\\
		A_6&=\{q\in M\mid f_1(q)\wedge f_{01}(q)= 0,\; f_1(q)\wedge f_{+01}(q)\wedge f_{-01}(q)\neq 0,\\&\hphantom{=\{q\in M\mid \,\,\,}f_1(q)\wedge f_{+01}(q)\wedge f_{++01}(q)\neq 0,\; f_1(q)\wedge f_{-01}(q)\wedge f_{--01}(q)\neq 0\}.
	\end{align*}
	If $q(t)\in\bigcup_{i=1}^6A_i$, then $t\not\in \Sigma\setminus\Sigma_0$. 
\end{prop}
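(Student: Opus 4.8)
The statement asserts that if a time-optimal trajectory passes through a point $q(t)$ lying in one of the six open sets $A_1,\dots,A_6$, then $t$ cannot be a Fuller time of positive order; equivalently, $t\in\Sigma_0$ if $t\in\Sigma$ at all. My plan is to argue by contradiction: assume $t\in\Sigma\setminus\Sigma_0$, and apply Theorem~\ref{thm:start} to obtain $h_1(t)=h_{01}(t)=0$ together with the alternative $h_{+01}(t)=0$ or $h_{-01}(t)=0$. Moreover, since $t$ is a Fuller time, Theorem~\ref{thm:start} produces a whole sequence of times $t_n\to t$ (each either in $\Sigma\setminus\Sigma_0$ or an inflection/junction point) at which one of $h_{+01},h_{-01}$ vanishes; by passing to a subsequence we may assume $h_{+01}(t_n)=0$ for all $n$ (the case $h_{-01}$ being symmetric, which handles the ``$+$''/``$-$'' asymmetry between, say, $A_2$ and $A_3$, or $A_4$ and $A_5$). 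The idea is then to differentiate the vanishing relations along the extremal dynamics and pass to the limit $n\to\infty$, producing enough independent annihilation conditions on $\lambda(t)\ne 0$ to force a forbidden collinearity of vector fields at $q(t)$, contradicting the defining open condition of each $A_i$.

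Concretely, I would proceed case by case following the structure already set up in Lemmas~\ref{lemma:Jets} and \ref{lemma:codim} (this is exactly the mechanism by which new simple relations are generated at an accumulation of Fuller points). For $A_1$: from the sequence $h_{+01}(t_n)=0$ (resp. $h_{-01}(t_n)=0$) converging to $t$, the mean-value identity \eqref{eq:integral_mean} applied with $J=(+01)$ gives at $t$ that $h_{0+01}(t)+\bar u\,h_{1+01}(t)=0$ for some $\bar u\in[-1,1]$; combined with $h_1(t)=h_{01}(t)=h_{+01}(t)=0$ this yields that $\lambda(t)$ annihilates $f_1(q(t))$, $f_{01}(q(t))$ and $f_{+01}(q(t))$ (or, from the other branch, $f_1,f_{01},f_{-01}$) at $q(t)$. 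Since $\lambda(t)\ne 0$ in the $3$-dimensional $M$, these three covectors being simultaneously annihilated forces $f_1(q(t))\wedge f_{01}(q(t))\wedge f_{+01}(q(t))=0$ (resp. with the ``$-$'' bracket), contradicting $q(t)\in A_1$. For $A_2$ (and symmetrically $A_3$): there $f_1\wedge f_{01}\wedge f_{+01}$ already vanishes at $q(t)$ but $f_1\wedge f_{01}\wedge f_{-01}\ne 0$, so the ``$-$'' branch is ruled out immediately as above, forcing us into the ``$+$'' branch; then one more differentiation step — applying \eqref{eq:integral_mean} to the relation $h_{+01}$, exactly as Lemma~\ref{lemma:Jets} generates $S_{(+I_l)}$ — produces annihilation of $f_{++01}(q(t))$ by $\lambda(t)$, hence $f_1\wedge f_{01}\wedge f_{++01}=0$ at $q(t)$, contradicting $A_2$. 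For $A_4$ and $A_5$ the same scheme is iterated once further (two extra ``$+$'' — resp. ``$-$'' — differentiation steps), using that the ``$-$'' (resp.\ ``$+$'') branch is blocked by the transversality hypothesis in the definition of $A_4$ (resp. $A_5$), to reach annihilation of $f_{+++01}$ (resp. $f_{---01}$) and contradict the open condition. For $A_6$, where $f_1\wedge f_{01}=0$ at $q(t)$, I would instead use the collinear-case machinery of Section~\ref{sec:HO}: if $t\in\Sigma\setminus\Sigma_0$ and $f_1(q(t))\wedge f_{01}(q(t))=0$, then either $f_1(q(t))=0$ — excluded in $A_6$ — or $f_0(q(t))$ and $f_1(q(t))$ are proportional so $q(t)\in\mathcal C$; a Fuller accumulation then gives, via Lemma~\ref{lemma:dependent} and Theorem~\ref{thm:collinear}, a whole string of annihilations $\mathrm{ad}^j_{s_a}(s_1)(\lambda(t))=0$, which unwinds into linear dependence among $f_1,f_{+01},f_{-01}$ and their higher brackets, contradicting the three simultaneous transversality conditions defining $A_6$. (Here I would cite \cite{AgrachevSigalotti,KS89,SigalottiJMS} for the detailed local classification that identifies precisely which brackets are forced.)

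\textbf{Main obstacle.}
The delicate point is not the differentiation-and-limit step itself, which is a routine use of \eqref{eq:integral_mean}, but rather the \emph{bookkeeping of signs and branches}: at each accumulation one only knows that \emph{one} of $h_{+01},h_{-01}$ vanishes along the approaching sequence, and the value of $\bar u\in[-1,1]$ coming out of the mean-value limit may be interior, in which case (as in Lemma~\ref{lemma:Jets}) one gets a relation involving $f_{0J}$ and $f_{1J}$ separately rather than directly $f_{+J}$ or $f_{-J}$. One must check that in each of the six configurations the available vanishing relations $h_1=h_{01}=0$ (and, in the $A_2,\dots,A_5$ cases, the already-forced vanishing of intermediate brackets) combine with the $\bar u$-relation to pin down annihilation of exactly the bracket appearing in the definition of $A_i$ — and that the ``wrong'' branch ($h_{-01}$ in $A_2,A_4$; $h_{+01}$ in $A_3,A_5$) is genuinely excluded by the transversality hypotheses before one is forced down it. Organizing this cleanly — ideally by isolating, as the paper does, the pure statement ``an accumulation of zeros of $h_J$ forces $\lambda$ to annihilate $f_{+J}$ or $f_{-J}$ (or, in degenerate cases, $f_{0J}$ and $f_{1J}$)'' — is what makes the proof work; the geometry of $A_1,\dots,A_6$ is then precisely rigged so that each forbidden annihilation pattern is a codimension-$\geq 1$ condition that $q(t)$ is assumed to avoid.
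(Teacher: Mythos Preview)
The paper does not prove this proposition at all: it is stated as a compilation of results from \cite{AgrachevSigalotti,KS89,SigalottiJMS}, and the paper explicitly remarks just before it that ``for a time-optimal trajectory there are powerful second-order techniques \cite{AgrachevGamkrelidze} that permit us to be a bit sharper''. So your proposal is not a comparison with the paper's proof but an attempt at an independent proof using only the first-order machinery of Sections~\ref{sec:Start}--\ref{sec:HO}. That attempt works for $A_1$ (Theorem~\ref{thm:start} alone suffices: $\lambda(t)\ne 0$ annihilates three vectors that $A_1$ declares independent), but it has genuine gaps for $A_2$--$A_6$.

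For $A_2$ (and symmetrically $A_3,A_4,A_5$): your plan is to apply \eqref{eq:integral_mean} with $J=(+01)$ to the sequence $t_n\to t$ with $h_{+01}(t_n)=0$, obtaining $h_{0+01}(t)+\bar u\, h_{1+01}(t)=0$ for some $\bar u\in[-1,1]$. But nothing in the first-order argument pins $\bar u$ to $+1$; you therefore cannot conclude $h_{++01}(t)=0$, only that $\lambda(t)$ annihilates $f_{0+01}(q(t))+\bar u f_{1+01}(q(t))$. The open condition defining $A_2$ speaks only about $f_{++01}$ (i.e.\ $\bar u=+1$) and says nothing about other values of $\bar u$, so no contradiction follows. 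The same obstruction recurs at each further iteration needed for $A_4,A_5$. This is not a bookkeeping issue: the cited references close this gap by invoking second-order necessary optimality conditions (index/Jacobi-type arguments), which is exactly why the proposition is stated for \emph{time-optimal} trajectories rather than for arbitrary extremals --- a distinction your argument never uses.

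For $A_6$ there is an additional misidentification: the defining condition is $f_1(q)\wedge f_{01}(q)=0$, not $f_0(q)\wedge f_1(q)=0$, so $q(t)$ need not lie in the collinearity set $\mathcal{C}$ of \eqref{defi:C}, and Lemma~\ref{lemma:dependent} and Theorem~\ref{thm:collinear} do not apply. The local analysis at such points (carried out in \cite{AgrachevSigalotti}) again rests on second-order optimality, not on the PMP alone.
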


Define now the set
\be\label{eq:W}
	W=\{q\in M\mid  f_1(q)\wedge f_{01}(q)\wedge f_{+01}(q)=0,\;f_1(q)\wedge f_{01}(q)\wedge f_{-01}(q)=0,\;f_1(q)\wedge f_{01}(q)\neq 0\}.
\ee
As a consequence of Proposition \ref{prop:noFul}, we can  infer the following result.

\begin{lemma}\label{lemma:outW}
	For a generic pair $(f_0,f_1)\in\mathrm{Vec}(M)^2$ and for every time-optimal trajectory $q(\cdot)$ of the control system \eqref{eq:si},
	 $q(\Sigma\setminus \Sigma_0)\setminus W$ is made of isolated points only.
\end{lemma}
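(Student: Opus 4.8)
The plan is to combine Proposition~\ref{prop:noFul} with a transversality argument applied to the complement of $W$ inside the collinearity set and outside it. First I would observe that, by Theorem~\ref{thm:start}, at any point $t\in\Sigma\setminus\Sigma_0$ one has $h_1(t)=h_{01}(t)=0$ and moreover $h_{+01}(t)=0$ or $h_{-01}(t)=0$. Hence, if $q(t)\notin W$, either $f_1(q(t))\wedge f_{01}(q(t))=0$ (the collinear-type degeneracy), or $f_1(q(t))\wedge f_{01}(q(t))\neq 0$ but $q(t)$ fails \emph{both} of the two conditions $f_1\wedge f_{01}\wedge f_{+01}=0$ and $f_1\wedge f_{01}\wedge f_{-01}=0$ defining $W$; since $\lambda(t)$ annihilates $f_1(q(t))$, $f_{01}(q(t))$, and (say) $f_{+01}(q(t))$, and $\dim M=3$, the non-vanishing of $f_1\wedge f_{01}\wedge f_{-01}$ at $q(t)$ would force $\lambda(t)=0$, a contradiction; therefore necessarily $f_1(q(t))\wedge f_{01}(q(t))\wedge f_{-01}(q(t))=0$ as well. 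Thus $q(\Sigma\setminus\Sigma_0)\setminus W$ is contained in the set
\[
Y=\{q\in M\mid f_1(q)\wedge f_{01}(q)=0\}\cup\{q\in M\mid f_1(q)\wedge f_{01}(q)\wedge f_{+01}(q)=f_1(q)\wedge f_{01}(q)\wedge f_{-01}(q)=0,\ f_1(q)\wedge f_{01}(q)\neq 0\}.
\]

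Next I would show that $Y\setminus\bigcup_{i=1}^6 A_i$ consists, for generic $(f_0,f_1)$, of isolated points of $M$. This is a pure Thom transversality computation: on the open set $\{f_1\wedge f_{01}\neq 0\}$, the complement of $W$ inside $Y$ is exactly the set where $f_{+01}$ \emph{and} $f_{-01}$ both lie in the plane $\mathrm{span}\{f_1,f_{01}\}$, i.e.\ two independent scalar conditions, so it has codimension $2$; removing $A_1$ (which is the full-rank locus, already excluded) and intersecting with the complements of $A_2,\dots,A_5$ (where one imposes in addition that some higher iterated bracket $f_{++01}$, $f_{--01}$, $f_{+++01}$, $f_{---01}$ still fails to restore a full frame) produces, generically, a locus of codimension $\ge 3=\dim M$, hence isolated points. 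Likewise, on the collinearity locus $\{f_1\wedge f_{01}=0\}$, the complement of $A_6$ imposes additional independent vanishing conditions, and a second transversality argument (as in the proof of Corollary~\ref{cor:collinear}) shows that for generic $(f_0,f_1)$ the remaining set is again of codimension $\ge 3$ and thus discrete. In all cases one invokes the jet-transversality theorem (e.g.\ \cite[Proposition~19.1]{abraham-robbin}) together with \cite[\S1.3.2]{goresky2012stratified} to guarantee that the good set of pairs $(f_0,f_1)$ is open and dense for the Whitney $C^\infty$ topology.

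Finally, I would glue the two pieces: for $(f_0,f_1)$ in the (finite) intersection of the generic sets produced above, $q(\Sigma\setminus\Sigma_0)\setminus W$ is contained in $Y$, and $Y\setminus\bigcup_{i=1}^6 A_i$ is a discrete subset of $M$; by Proposition~\ref{prop:noFul}, $q(t)\in\bigcup_{i=1}^6 A_i$ forces $t\notin\Sigma\setminus\Sigma_0$, so in fact $q(\Sigma\setminus\Sigma_0)\setminus W\subset Y\setminus\bigcup_{i=1}^6 A_i$, a discrete set. Since $q(\cdot)$ is continuous on the compact interval $[0,T]$, the preimage of a discrete set has only finitely many accumulation points, and one checks that along a time-optimal trajectory $q(\cdot)$ cannot be constant on a nontrivial subinterval near such a degenerate point without $t$ lying in $\Sigma_0$ (using the standard nondegeneracy of extremals, e.g.\ Theorem~\ref{t:AAA} applied after a further generic reduction); hence $q(\Sigma\setminus\Sigma_0)\setminus W$ is itself made of isolated points only. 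The main obstacle is the bookkeeping of the transversality codimension counts for the several strata $A_2,\dots,A_6$ — in particular verifying that imposing the failure of $W$ \emph{together with} the failure of each $A_i$ genuinely cuts the dimension down to $\le 0$, which requires checking that the relevant iterated-bracket conditions are independent in the jet variables, exactly the type of argument carried out in Lemmas~\ref{lemma:Jets}, \ref{lemma:codim} and \ref{lemma:collinear}.
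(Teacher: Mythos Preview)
Your first paragraph contains a genuine error. You claim that if $\lambda(t)$ annihilates $f_1(q(t))$, $f_{01}(q(t))$ and $f_{+01}(q(t))$, then the non-vanishing of $f_1\wedge f_{01}\wedge f_{-01}$ at $q(t)$ would force $\lambda(t)=0$. But $\lambda(t)$ is not assumed to annihilate $f_{-01}(q(t))$: Theorem~\ref{thm:start} gives you only \emph{one} of $h_{+01}(t)=0$ or $h_{-01}(t)=0$, not both. The correct conclusion in your Case~2 is the opposite of what you write: from $h_{+01}(t)=0$ and $f_1\wedge f_{01}\neq 0$ you obtain $f_1\wedge f_{01}\wedge f_{+01}=0$ at $q(t)$, and then $q(t)\notin W$ forces $f_1\wedge f_{01}\wedge f_{-01}\neq 0$. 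As written, the second component of your set $Y$ is literally $W$, so $Y=\{f_1\wedge f_{01}=0\}\cup W$ and your inclusion $q(\Sigma\setminus\Sigma_0)\setminus W\subset Y$ collapses to the false claim $q(\Sigma\setminus\Sigma_0)\setminus W\subset\{f_1\wedge f_{01}=0\}$. The same confusion reappears in your second paragraph, where ``the complement of $W$ inside $Y$'' on $\{f_1\wedge f_{01}\neq 0\}$ is empty, not the codimension-$2$ locus you describe.

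The paper's proof sidesteps this by not invoking Theorem~\ref{thm:start} at all. Since Proposition~\ref{prop:noFul} already yields $q(\Sigma\setminus\Sigma_0)\subset\bigcap_{i=1}^6 A_i^c$, it suffices to verify the purely jet-theoretic estimate $\mathrm{codim}_{J^N_qM\times J^N_qM}\big(\bigcap_{i=1}^6\mathcal{A}_i^c\cap\mathcal{W}^c\big)\ge 3$. On the stratum $\{f_1\wedge f_{01}\neq 0\}\cap W^c$ one may assume, say, $f_1\wedge f_{01}\wedge f_{+01}\neq 0$, and then chain through $A_1^c$, $A_3^c$, $A_5^c$ to extract the three independent conditions $f_1\wedge f_{01}\wedge f_{-01}=f_1\wedge f_{01}\wedge f_{--01}=f_1\wedge f_{01}\wedge f_{---01}=0$; on the stratum $\{f_1\wedge f_{01}=0\}$ one uses $A_6^c$ for the third condition. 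Your later paragraphs gesture at essentially this codimension count, but the preliminary reduction via $Y$ is both unnecessary and, as stated, incorrect.
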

\begin{proof}
	The result is proved by 
	using the same computational approach based on transversality theory as in the proof of Lemma~\ref{lemma:Jets}.
Instead of working in $T^*M$ as in Lemma~\ref{lemma:Jets},	it is actually sufficient to prove that $$\mathrm{codim}_{J^N_qM\times J^N_qM}\left(\bigcap_{i=1}^6 \mathcal{A}_i^c\cap \mathcal{W}^c\right)\geq 3,\qquad q\in M,$$ 
where $\mathcal{A}_1,\dots,\mathcal{A}_6$ and $\mathcal{W}$ are the subsets of $J^N M\times J^N M$ defined implicitly by the relations
$$A_i=\{ q\in M \mid (j^N_q(f_0),j^N_q(f_1))\in\mathcal{A}_i\},\quad i=1,\dots,6,\quad W=\{ q\in M \mid (j^N_q(f_0),j^N_q(f_1))\in\mathcal{W}\}.$$
 Pick then any point $q\in W^c$ that satisfies $f_1(q)\wedge f_{01}(q)=0$. Then $\mathcal{W}\cap J^N_qM\times J^N_qM$  
  is already a set of codimension two in $J^N_qM\times J^N_qM$. Moreover, if $q\in A_6^c$, then necessarily 
  the jets of $f_0,f_1$ at $q$ satisfy  another nontrivial dependence relation, and we can conclude.
	
	On the other hand, suppose that $q\in \cap_{i=1}^6 A_i^c$ and that $f_1(q)\wedge f_{01}(q)\wedge f_{+01}(q)\neq 0$, the remaining case being identical. Then since $q\in A_1^c$ we infer the relation $f_1(q)\wedge f_{01}(q)\wedge f_{-01}(q)=0$. We pass now to the condition $q\in A_3^c$, and we see that this obliges $f_1(q)\wedge f_{01}(q)\wedge f_{--01}(1)=0$. Finally, the relation $q\in A_5^c$ forces $f_1(q)\wedge f_{01}(q)\wedge f_{---01}(q)=0$, which in turn provides us with a third dependence relation at $q$, and therefore once again we conclude. 
\end{proof}

\begin{proof}[Proof of Theorem \ref{thm:sharp}] 
Lemma~\ref{lemma:outW} states, in particular,  that for a generic choice of the pair $(f_0, f_1)$ and for every time-optimal trajectory $q(\cdot)$ we have that $q(\Sigma\setminus\Sigma_0)\setminus W\subset q(\Sigma_1)
$, 
or equivalently that 
\be\label{eq:incl}
q(\Sigma\setminus (\Sigma_0\cup \Sigma_1))\subset W.\ee 

We are left to prove that 
the density points of $q(\Sigma\setminus (\Sigma_0\cup \Sigma_1))=q(\Sigma\setminus (\Sigma_0\cup \Sigma_1))\cap W$ are isolated.

We have already shown that along any time-extremal $(q(\cdot),u(\cdot),\lambda(\cdot))$, whenever $t\in\Sigma\setminus\Sigma_0$ the relations $$h_1(\lambda(t))=\langle \lambda(t), f_1(q(t))\rangle=0 \quad\textrm{and}\quad h_{01}(\lambda(t))=\langle \lambda(t), f_{01}(q(t))\rangle=0$$ hold true. Since, by definition, for every point $q\in W$ both $f_{+01}(q)$ and $f_{-01}(q)$ belong to the two-dimensional space $\mathrm{span}\{f_1(q),f_{01}(q)\}$, then for every $t\in\Sigma\setminus(\Sigma_0\cup\Sigma_1)$ also $h_{+01}(\lambda(t))=h_{-01}(\lambda(t))=0$. If $t_\infty$ is an accumulation point of $\Sigma\setminus(\Sigma_0\cup\Sigma_1)$, then, by Lemma~\ref{lemma:codim} and using the Jacobi identity, either $h_{0101}(t_\infty)=0$  or $h_{0101}(t_\infty)\ne 0$ and
\begin{equation}\label{eq:destt}
h_{0001}(t_\infty)h_{1101}(t_\infty)-h_{0101}(t_\infty)^2= 0.\end{equation}
When $h_{0101}(t_\infty)=0$, we conclude by transversality, noticing that 
$$f_{0101}(q(t_\infty))\in \lambda(t_\infty)^\perp=\mathrm{span}\{f_1(q(t_\infty)),f_{01}(q(t_\infty))\}$$
 provides a third independent condition on the jet of the pair $(f_0,f_1)$ at $q(t_\infty)$. 
In the case $h_{0101}(t_\infty)\ne 0$, 
let us define in a neighborhood of $q(t_\infty)$ a system of coordinates $(x_1,x_2,x_3)$ 
so that $(dx_1,dx_2,dx_3)$ is dual to $(f_1,f_{01},f_{0101})$. 
Then 
 \eqref{eq:destt}  says that the product of the 
 third components of  $ f_{0001}(q(t_\infty))$ and $f_{1101}(q(t_\infty))$ is equal to one, which corresponds to a third independent condition on the jet of the pair $(f_0,f_1)$  at $q(t_\infty)$.  
\end{proof}

\bibliographystyle{abbrv}
\bibliography{biblio}
	 
\end{document}